\renewcommand{\baselinestretch}{1.15}\parskip=3pt
\renewenvironment{thebibliography}[1]{
  \begin{oldthebibliography}{#1}
    \setlength{\itemsep}{0em}
    \setlength{\parskip}{.3em}
}
{
  \end{oldthebibliography}
}
\renewcommand{\paragraph}{%
  \@startsection{paragraph}{4}%
  {\z@}{1ex \@plus .5ex \@minus .2ex}{-1em}%
  {\normalfont\normalsize\bfseries}%
}
\DeclareSymbolFont{boldoperators}{OT1}{cmr}{bx}{n}
\edef\boldbar{\unexpanded{\protect\mathaccentV{bar}}\number\symboldoperators16}
\newtheorem{thm}{Theorem}
\newtheorem*{teo*}{Theorem}
\newtheorem{corol}[thm]{Corollary}
\newtheorem{lemma}[thm]{Lemma} 
\newtheorem{prop}[thm]{Proposition}
\newtheorem{defin}[thm]{Definition}
\theoremstyle{definition}
\theoremstyle{remark}
\newtheorem{remqee}[thm]{Remark}
\newenvironment{rem}{\begin{remqee}}{\qee\end{remqee}}
\numberwithin{equation}{section}
\newcommand{\A}{\mathcal A}
\newcommand{\W}{\mathcal W}
\newcommand{\X}{\mathcal X}
\newcommand{\U}{\mathcal U}
\newcommand{\V}{\mathcal V}
\renewcommand{\P}{\mathbb P}
\newcommand{\R}{\mathbb R}
\newcommand{\C}{\mathbb C}
\newcommand{\Z}{\mathbb Z}
\newcommand{\K}{\mathfrak K}
\newcommand{\rk}{\operatorname{rk}}
\newcommand{\qee}{\ \hfill $\triangle$}
\newcommand{\Kom}{{\mathbf{Kom}}}
\newcommand{\Hom}{\operatorname{Hom}}
\newcommand{\id}{\operatorname{id}}
\newcommand{\bnu}{{\boldsymbol\nu}}
\newcommand{\fr}{\heartsuit}
\newcommand{\cO}{{\mathcal O}}
\newcommand{\cE}{{\mathcal E}}
\newcommand{\opq}[2]{\cO_{\Sigma_n}(#1,#2)}
\newcommand{\GL}{\operatorname{GL}}
\newcommand{\Hilb}{\operatorname{Hilb}}
\begin{document}

 \begin{center} \bf     NESTED HILBERT SCHEMES  ON HIRZEBRUCH \\[5pt] SURFACES  AND QUIVER VARIETIES \end{center}

\thispagestyle{empty} \vspace{-3mm}
\begin{center}{\sc Ugo Bruzzo,$^{abcd}$    Valeriano Lanza,$^{e}$ and   {\sc Pedro Henrique Dos Santos}$^{b}$}   \\[5pt]
\small
$^a$ SISSA (International School for Advanced Studies), Via Bonomea 265, 34136 Trieste, Italy \\
$^b$ Departamento de Matem\'atica, Universidade Federal da Para\'iba, \\ Campus I, Castelo Branco, Jo\~ao Pessoa, PB, Brazil \\
$^c$ INFN (Istituto Nazionale di Fisica Nucleare), Sezione di Trieste \\
$^d$ IGAP (Institute for Geometry and Physics), Trieste  \\
$^e$ Instituto de Matem\'atica e Estat\'istica, Universidade Federal \\ Fluminense,  Rua Prof.~M.F.~de Freitas Reis, Niter\'oi, RJ, Brazil \\[5pt]

Email: {\tt bruzzo@sissa.it, vlanza@id.uff.br, pedro.santos@academico.ufpb.br}
\end{center}

\vfill

\begin{abstract}   For $n\ge 1$ we show that the length 1 nested Hilbert scheme of the total space $\Xi_n$ of the line bundle $\cO_{\P^1}(-n)$, parameterizing pairs of nested 0-cycles in $\Xi_n$, is a quiver variety associated with a suitable quiver with relations. This generalizes previous work about nested Hilbert schemes on $\C^2$ in one direction, and about the Hilbert schemes of points of $\Xi_n$ in another direction.
\end{abstract}

\vfill
%
\renewcommand{\baselinestretch}{0.25}
\def\contentsname{\normalsize Contents}
{\footnotesize\tableofcontents}
\renewcommand{\baselinestretch}{1.15}

\vfill\noindent\begin{minipage}{\textwidth} \small
\parbox{\textwidth}{\hrulefill} \\
Date:  March 26th, 2024. Revised May 7th, 2024\\
MSC 2020:  14C05, 14D20, 14D23, 14J10, 14J26, 16G20  \\
Keywords: Framed flags, nested Hilbert schemes, Hirzebruch surfaces, moduli of quiver representations \\
U.B.'s research is partly supported by   PRIN 2022 ``Geometry of algebraic structures: moduli, invariants, deformations'' and INdAM-GNSAGA.
Moreover, this investigation started while he was the recipient of the  CNPq ``Bolsa de Produtividade em Pesquisa  1B'' 313333/2020.  V.L.'s research is partly supported by the FAPERJ research grant ``Jovem Cientista de Nosso Estado''  n.~E-26/200.280/2023.
 P.H.S. was supported by a CAPES PhD Fellowship (2020-2024)  and has benefitted of the CNPq SWE fellowship n.~200861/2022-0.
\end{minipage}

\newpage

\section{Introduction}

Quivers are magic. Starting from an often very simple directed graph one first constructs an associative algebra, then constructs a moduli space of its representations (called a {\em quiver variety}), and eventually discovers that this is also the moduli space of nontrivial geometric structures. For instance, if we
consider instantons, i.e. anti-self-dual connections on the 4-sphere, including degenerate configurations, we have a singular moduli space, whose resolution of singularities on the one hand, is the moduli space of framed torsion-free sheaves on the complex projective plane; and on the  other hand, thanks to Nakajima's work  \cite{Nakabook}, can be regarded as the moduli space of representations of the path algebra of a quiver with relations --- the ADHM quiver. Since a rank one torsion-free sheaf on $\P^2$, framed on a line, may be identified with the ideal sheaf of a 0-cycle on $\C^2$, also the Hilbert scheme of points of $\C^2$ is a quiver variety. 
Other examples of this correspondence are\vskip-25pt
t\begin{itemize} \itemsep=-2pt 
\item moduli spaces of instantons on ALE spaces \cite{Kronh-Naka};
\item (equivariant) Hilbert schemes of points of ALE spaces (\cite{Kuz} and references therein);
\item the crepant resolutions of singularities $\C^3/G$, where $G$ is a finite subgroup of $SL_3(\C)$, are moduli spaces of structures called $G$-constellations (a generalization of the $G$-Hilbert schemes), and are moduli spaces of representations of the McKay quivers \cite{Craw-Ishii, CMT}. Related constructions and results can also be found in \cite{CGGS1,CGGS2} and other papers.
\end{itemize} 

Another construction is described in \cite{JvF,JvFL}. One considers {\em framed flags} on $\P^2$, i.e., pairs $(E,F)$, where
$E$ and $F$ are torsion-free sheaves of the same rank on $\P^2$, such that $E\subset F$, $F$ is framed on a line, and the quotient $F/E$ has dimension zero and is supported away from the line. The moduli space of these pairs turns out to be   a quiver variety associated with an {\em enhanced ADHM quiver.} When $\rk E = \rk F = 1$ the moduli space is the {\em nested Hilbert scheme} of $\C^2$, and parameterizes pairs of nested 0-cycles. (Nested Hilbert schemes were probably first considered by Keel \cite{Keel} and were studied in some detail  by Cheah in \cite{Cheah-Nested1}.)

By removing a suitable  rational curve $\ell_\infty$ from the $n$-th Hirzebruch surface $\Sigma_n$, with $n\ge 1$, one obtains the total space of the line bundle $\cO_{\P^1}(-n)$, that we denote $\Xi_n$. Rank 1 torsion-free sheaves on $\Sigma_n$ framed on $\ell_\infty$, suitably twisted, are ideal sheaves of 0-cycles in $\Xi_n$.  Building on the monadic description of the moduli spaces of framed sheaves on $\Sigma_n$ that was performed in
 \cite{bbr}, in \cite{bblr,bblr-irr} it was shown that the Hilbert schemes of points $\operatorname{Hilb}^c(\Xi_n)$ are quiver varieties associated with a suitable quiver ${\mathbf Q}^n$, which we call the {\em $n$-th Hirzerbruch quiver.} The aim of the present paper is to obtain a similar description in the case of the {\em nested Hilbert schemes of points} $ \operatorname{Hilb}^{c'\!,c}(\Xi_n)$.
 
 Since we are going to use a categorical language (so the main problem is to establish an isomorphism between the functor of families of  representations of a certain quiver, and the functor of families of nested 0-cycles on $\Xi_n$), we start by rephrasing the results of both \cite{JvF} and \cite{bbr} in full categorical language, in Sections \ref{projplane} and \ref{framHirz}, respectively.  This is preceded by Section \ref{gen} where a few generalities about spaces of representations of a quiver and about framed flags of sheaves are recalled. In Section  \ref{trippa}   the main result is proved. The trick for doing that is the same as in the case of the projective plane, i.e., to regard the spaces of representations of an ``enhanced'' quiver as a space of morphisms between two copies of the quiver ${\mathbf Q}^n$, although the present case is more complicated and technically more involved.

\smallskip
 \noindent{\bf Acknowledgements.} We thank Simone Marchesi for a useful conversation. The research on which this paper is based was initiated while U.B.~was a Professor Titular Visitante at Universidade Federal da Para\'iba, Jo\~ao Pessoa, Brazil; he thanks the colleagues at the  Department of Mathematics for support and for their hospitality. P.H.S.~thanks SISSA for the warm hospitality while he was visiting under a CNPq grant
``Programa institucional de doutorado sandu\'iche no exterior.''
 \section{Generalities } \label{gen}
We start this  section  by reminding some generalities about families of quiver representations, basically following \cite{KiQ}. Then we introduce the functor
of families of framed flags on a projective surface.  By {\em scheme} we shall always mean a connected   scheme of finite type over $\C$. All locally free sheaves will have finite rank.

\subsection{Quiver representations} Let ${\mathbf Q}$ be a quiver; ${\mathbf Q}_0$ will denote the set of vertexes, and ${\mathbf Q}_1$ the set of arrows. A stability parameter $\Theta$ for ${\mathbf Q}$ may be regarded as an element in $\R^{\# {\mathbf Q}_0}$. The maps $s,t\colon \mathbf Q_1\to \mathbf Q_0$
are the {\em source} (tail) and {\em target} (head) maps, respectively. 
\begin{defin} 1. A family of representations of ${\mathbf Q}$ parameterized by a scheme $T$ is, for every $v\in {\mathbf Q}_0$,
a locally free sheaf $\mathcal W_v$ on $T$, and for any arrow   $a\in {\mathbf Q}_1$,  a sheaf morphism
$\phi_a \colon \W_{s(a)} \to\W_{t(a)}$. Note that for every closed point $t\in T$ by taking the fiber at $t$  one gets a representation of ${\mathbf Q}$ in the usual sense. 

2. A morphism between two families of representations $(T,\mathcal W_v,\phi_a)$ and $(S,\mathcal U_v,\psi_a)$ 
is a scheme morphism $f\colon T \to S$ and a collection of sheaf morphisms $\{F_v\colon\W_v \to f^\ast\U_v,\ v\in {\mathbf Q}_0\}$ such that for every arrow $a\in {\mathbf Q}_1$
the diagram
$$\xymatrix{\W_{s(a)} \ar[r]^{F_{s(a)\ }} \ar[d]_{\phi_a} & f^\ast \mathcal U_{s(a)} \ar[d]^{f^\ast\psi_a} \ar[d] \\
\mathcal W_{t(a)} \ar[r]^{F_{t(a)\ } } &   f^\ast \mathcal U_{t(a)}
}$$
commutes. A morphism is an isomorphism when $f$ and all morphisms $F_v$ are isomorphisms.

3. A family of representations is $\Theta$-stable if for every closed points $t\in $T the representation corresponding to $t$ is $\Theta$-stable. 
\end{defin}

Note that if $(S,\mathcal U_v,\psi_a)$ is a family of representations parameterized by $S$, and $f\colon T\to  S$ is a scheme morphism,
then $(T, f^\ast \mathcal U_v,f^\ast\psi_a)$ is  a family of representations parameterized by $T$.

\begin{defin} [The representation moduli functor]
The functor of   families of  representations of ${\mathbf Q}$ is the functor
\begin{equation}
\begin{array}{cccl}
\mathfrak R^{\mathbf Q} \colon & \mbox{\bf Sch}^{\mbox{\footnotesize\rm op}} & \to & \mbox{\bf Set} \\[10pt]
& T & \mapsto & \left\{\parbox{62mm}{\rm isomorphism classes of families of representations of ${\mathbf Q}$ parameterized by $T$}\right\}
\end{array}
\end{equation}
where $ \mbox{\bf Sch}$ is the category of connected   schemes of finite type over $\C$.
The action of this functor on morphisms is by pullbacks: if $f\in \operatorname{Hom}(T,S)$, then 
$$ \W_{s(a)} \xrightarrow{\phi_a} \W_{t(a)} \quad \text{is sent to} \quad   f^\ast\W_{s(a)} \xrightarrow{f^\ast \phi_a} f^\ast \W_{t(a)} .$$

\end{defin}
The {\em dimension vector}  $\mathbf v$ of a representation is the string of nonnegative integers
$(d_1,\dots, d_{\# {\mathbf Q}_0})$ where $d_i = \rk  \W_i$.   
After fixing a dimension vector $\mathbf v$ and a stability parameter $\Theta$, one can also introduce the subfunctor $\mathfrak R^{{\mathbf Q},s}_{\mathbf v, \Theta}$
of $\Theta$-stable $\mathbf v$-dimensional representations.
 If $\mathbf v$ is primitive, this functor is representable by a fine moduli space  
$\mathcal M^{s}_{\mathbf v, \Theta}$ \cite[Prop. 5.3]{KiQ}.

\paragraph{Framed representations.} Actually we shall be concerned with {\em framed representations} of quivers, in particular, representations that are framed at one vertex.  One chooses a vertex in the given quiver, and fixes the vector space $W$ associated with that vertex.  Note that that fixes one component of the dimension vector. Moreover, one only considers morphisms of representations such that the morphism corresponding to that vertex is either zero or the identity. For families, we fix the vector bundle corresponding to the framing vertex to be $W\otimes \cO_T$, where $W$ is a fixed vector space. 
The functor of families of framed representations will be denoted by $\mathfrak R^{{\mathbf Q}\fr }$.

\subsection{Framed flags}
We introduce now the notion of framed flag of sheaves (of length 1). Let $X$ be an irreducible projective smooth surface, and $D$ a divisor in it (for the moment we only establish some notation, and at this level of generality we do not need to make any additional assumptions on $X$ and $D$).
A framed flag of length 1 and type $(r,\gamma,c,\ell)$ on $(X,D)$
is a triple $(E,F,\phi)$, where 
\begin{itemize} \itemsep=-2pt
\item
$E$ and $F$ are torsion-free sheaves on $X$, 
with $E\subset F$, $r=\rk E =\rk F$;
\item the support of $F/E$ is 0-dimensional and is disjoint from $D$;
\item
$\phi$ is an isomorphism of $F_{\vert D}$ with $\cO_D^{\oplus r}$; \item
$c_1(F)=\gamma \in \operatorname{NS}(X)$;
$c = c_2(F)$, $\ell = c_2(E)-c_2(F) = h^0(X, F/E)$. 
\end{itemize} 
As a consequence, $\phi$ also provides an isomorphism $E_{\vert D} \simeq \cO_D^{\oplus r}$. Note that necessarily $c_1(E) = \gamma $ and  $\gamma\cdot D=0$. 

We define the functor   $\mathfrak F^{X,D}_{r,\gamma,c,\ell}\colon \mbox{\bf Sch}^{\mbox{\footnotesize\rm op}} \to \mbox{\bf Set}$ of families of   length 1 framed flags   on $(X,D)$
as
\begin{equation}  \mathfrak F^{X,D}_{r,\gamma,c,\ell}(T) = \{\mbox{isomorphism classes of triples}\ (E,F,\phi)\} \label{flagfunct} \end{equation}
where
\begin{itemize}\itemsep=-2pt \item $E$, $F$ are rank $r$ torsion-free sheaves on $X\times T$, flat on $T$,  with $E\subset F$;
\item  for all closed points $t\in T$, the support of $(F/E)_t$ is 0-dimensional and is disjoint from $D$;
\item $\phi$ is an isomorphism $\phi \colon F_{\vert D \times T} \to \mathcal O_{D\times T}^{\oplus r}$;
\item for all closed $t\in T$, $c_2(F_t) = c$, $c_2(E_t)-c_2(F_t) = \ell$, $c_1(F_t) = \gamma$; 
\item morphisms of families of framed flags are defined in the obvious way;
\item the functor acts on scheme morphisms by pullback.
\end{itemize}
This functor was  defined in \cite{JvF}
for $X=\P^2$ and $D$ a line, that we denote as usual $\ell_\infty$ (note that necessarily $\gamma=0$ in that case). Again in \cite{JvF}, it was proved that in that case this functor is representable. This may be generalized as follow.

\begin{thm} Let $X$ be a smooth, irreducible projective surface, and let $D$ be a smooth, irreducible, big and nef divisor in $X$. Then for every choice of $(r,\gamma,c,\ell)$, the functor  $\mathfrak F^{X,D}_{r,\gamma,c,\ell}$ is representable. \label{flagrepr}
\end{thm}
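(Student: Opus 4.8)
The plan is to realize $\mathfrak F^{X,D}_{r,\gamma,c,\ell}$ as an open subfunctor of a relative Quot functor built over the moduli space of framed sheaves on $(X,D)$, thereby reducing the statement to the case $\ell=0$, i.e.\ to the functor of families of framed torsion‑free sheaves alone.

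\emph{The case $\ell=0$.} Here the functor is that of families of framed torsion‑free sheaves $(F,\phi)$ of rank $r$ with $c_1(F)=\gamma$, $c_2(F)=c$ and $\phi\colon F_{\vert D}\xrightarrow{\,\sim\,}\cO_D^{\oplus r}$; for $X=\P^2$ and $D$ a line this is \cite{JvF}, and I would invoke the corresponding representability statement for a good framing divisor to obtain a scheme $\mathcal M=\mathcal M^{X,D}_{r,\gamma,c}$ of finite type over $\C$ with a universal framed sheaf $(\mathcal F,\Phi)$ on $X\times\mathcal M$. The point that makes this moduli problem rigid — which I would recall in detail — is the simplicity of framed sheaves on a big and nef divisor: a framed $F$ is locally free near $D$, giving an exact sequence $0\to F(-D)\to F\to F_{\vert D}\to 0$; since $D$ is big, $\deg\cO_D(-D)=-D^2<0$, so $H^0(\cO_D(-kD))=0$ for all $k\ge1$, hence any endomorphism of $F$ vanishing on $D$ factors through $F(-kD)$ for every $k$; and $\Hom(F,F(-kD))=0$ for $k\gg0$ because $D\cdot H>0$ for any ample $H$, which drives the slopes of subsheaves of $F(-kD)$ below $\mu_{\min}(F)$. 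Together with $H^0(\cO_D)=\C$ (as $D$ is integral) this yields $\operatorname{End}(F)\hookrightarrow\operatorname{End}(F_{\vert D})$, so that a framed automorphism must be the identity.

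\emph{Adding the flag.} A framed flag with underlying framed sheaf $(F,\phi)$ is the same datum as a surjection $F\twoheadrightarrow Q$ onto a zero‑dimensional sheaf of length $\ell$ supported away from $D$, whose kernel $E$ is then automatically a rank $r$ torsion‑free subsheaf with $c_1(E)=\gamma$ and $c_2(E)-c_2(F)=\ell$. I would form the relative Quot scheme $\operatorname{Quot}(\mathcal F/\mathcal M,\ell)$ parameterizing quotients of $\mathcal F$ with constant Hilbert polynomial $\ell$; it is projective, hence of finite type, over $\mathcal M$, with universal surjection $\mathcal F\twoheadrightarrow\mathcal Q$. Let $\mathcal N$ be the locus over which $\mathcal Q$ is supported away from $D$: this is open, because $\operatorname{supp}\mathcal Q$ is finite over $\operatorname{Quot}(\mathcal F/\mathcal M,\ell)$, so the image of $\operatorname{supp}\mathcal Q\cap\bigl(D\times\operatorname{Quot}(\mathcal F/\mathcal M,\ell)\bigr)$ is closed. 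Over $\mathcal N$ the kernel $\mathcal E$ of $\mathcal F\twoheadrightarrow\mathcal Q$ is flat with torsion‑free fibres and satisfies $\mathcal E_{\vert D\times\mathcal N}=\mathcal F_{\vert D\times\mathcal N}$, so $(\mathcal E,\mathcal F_{\vert X\times\mathcal N},\Phi)$ is a family of framed flags of type $(r,\gamma,c,\ell)$ over $\mathcal N$.

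\emph{The universal property.} Given a family $(E,F,\phi)$ over $T$, one first checks that $F/E$ is $T$‑flat: on each fibre the map $E_t\to F_t$ has zero‑dimensional cokernel, so its kernel is a torsion subsheaf of the torsion‑free $E_t$ and hence vanishes, the fibrewise sequence $0\to E_t\to F_t\to(F/E)_t\to0$ stays exact, and $T$‑flatness of $F/E$ follows from that of $E$ and $F$. Thus $(F,\phi)$ is a family of framed sheaves, classified by a unique $g\colon T\to\mathcal M$; transporting $F\twoheadrightarrow F/E$ through the resulting isomorphism $(\id\times g)^{\ast}(\mathcal F,\Phi)\cong(F,\phi)$ and using the universal property of the relative Quot scheme gives a unique lift $T\to\operatorname{Quot}(\mathcal F/\mathcal M,\ell)$ over $g$, which factors through $\mathcal N$ since $(F/E)_t$ is supported off $D$. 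Pulling back the universal flag of the previous paragraph is inverse to this assignment and natural in $T$, so $\mathcal N$ represents $\mathfrak F^{X,D}_{r,\gamma,c,\ell}$. The only genuinely substantial step is the first one — representability of the framed‑sheaf functor, together with the simplicity of framed sheaves, for a merely big and nef (rather than ample) divisor $D$; granting this, the construction of $\mathcal N$ and the verification of its universal property are a routine manipulation of relative Quot schemes and flatness, parallel to the projective‑plane case treated in \cite{JvF}.
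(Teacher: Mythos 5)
Your argument is correct and follows essentially the same route as the paper: the paper's proof simply cites Corollary 3.3 of \cite{BruMar} for the fine moduli space $\mathcal M$ of framed sheaves on $(X,D)$ with $D$ big and nef, and then observes that the proof of Proposition 1 of \cite{JvF} — which is precisely the relative Quot scheme construction over $\mathcal M$ restricted to the open locus where the quotient avoids $D$ — applies verbatim. Your write-up just fills in the details of those two citations, including the simplicity argument underlying the fineness of $\mathcal M$ and the flatness/universal-property checks.
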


\begin{proof}According to Corollary 3.3 of \cite{BruMar}  there exists a fine moduli space of torsion-free sheaves $F$ on $X$, with invariants
$\rk F =r$, $c_1(F)=\gamma$, $c_2(F)=c$, framed on $D$ to the trivial sheaf. Then the proof of Proposition 1 in \cite{JvF} applies verbatim.
\end{proof}

\begin{rem} This theorem can be further generalized by replacing the trivial sheaf on $D$ with any semistable vector bundle of rank $r$. \end{rem}

We denote by $\mathcal F^{X,D}_{r,\gamma,c,\ell}$ the scheme   representing the functor $\mathfrak F^{X,D}_{r,\gamma,c,\ell}$. Not much is known about the smoothness, and, more generally, irreducibility and reducedness of this scheme when $r>1$ (see \cite{JvFL} in this connection). When $r=1$ one can assume $\gamma=0$, and then
$\mathcal F^{X,D}_{1,0,c,\ell}$ is the nested Hilbert scheme $\operatorname{Hilb}^{c,c+\ell}(X_0)$, parameterizing pairs of nested 0-cycles of length $c$ and $c+\ell$ in the quasi-projective smooth surface $X_0=X\setminus D$. One knows from \cite{Cheah-Nested1} that $\operatorname{Hilb}^{c,c+\ell}(X_0)$ is smooth if and only if $\ell=1$.

 \section{The case of the projective plane} \label{projplane}
 The enhanced ADHM quiver $\boldbar {\mathbf Q}$
 is the  quiver
\begin{equation} \label{augADHMquiv}
	\begin{tikzcd}
		\bullet \arrow[in=240, out=300,loop, "b_2'"]\arrow[out=120,in=60,loop,"b_1'"] \arrow[rr, "\phi"]  && \bullet  \arrow[out=300,in=240,loop, "b_1"]\arrow[out=120,in=60,loop,"b_2"] \arrow[out=30, rr, in=150,"i"]  &	 &\bullet  \arrow[out=210,ll, in=330,"j"] \ \infty	\end{tikzcd}
\end{equation}
 with the relations 
 \begin{equation} \label{relaugADHM}b_1b_2-b_2b_1+ij=0; \quad b_1\phi-\phi b_1'=0; \quad  b_2\phi-\phi b_2'=0;\quad j\phi=0;\quad b_1'b_2'-b_2'b_1'=0.\end{equation}
 Jardim and von Flach in \cite{JvF} proved that in the case $(X,D) = (\P^2,\ell_\infty)$, for $\mathbf v = (\ell,c+\ell,r)$, and with a suitable choice of the stability parameter $\Theta$, the functor of families of stable framed representations
 $\mathfrak R^{\boldbar {\mathbf Q}\fr  s}_{\mathbf v, \Theta} $ and the functor of families of framed flags $\mathfrak F_{r,c,\ell}=\mathfrak F^{\P^2,\ell_\infty}_{r,0,c,\ell}$ are isomorphic.
 We review here their proof, providing some more details, especially about the categorical formalization of the problem. This will be useful, as a comparison but also to provide some needed results, for what we shall do in the next sections about framed flags on Hirzebruch surfaces. 
 
The first step will be to represent $\mathfrak F_{r,c,\ell}$ as a functor of families of representations of $\boldbar {\mathbf Q}$.
 The components of the dimension vector of this quiver list the dimensions of the vector spaces attached to the vertexes from left to right.

The crux of the above mentioned result is the following theorem. 
\begin{thm} {\rm \cite{JvF,JvFL} }\label{JvFthm}
 Let $\mathbf v = (\ell,c+\ell,r)$, and 
let $\Theta = (\theta,\theta',\theta_\infty)\in\R^3$ 
 with $$\theta'>0, \qquad \theta+\theta'<0 \quad\text{and}\quad \ell\theta+(c+\ell)\theta'+r\theta_\infty=0.$$
 Let $ \mathfrak R^{\boldbar {\mathbf Q}\fr  s}_{\mathbf v, \Theta}$ be the functor of families of  framed representations
 of the   enhanced ADHM quiver   $\boldbar {\mathbf Q}$ depicted in equation \eqref{augADHMquiv} with the relations \eqref{relaugADHM}, framed at the vertex $\infty$.\footnote{Note that the vector space $W$ corresponding to the framing vertex has dimension $r$.}
  There exists a  natural transformation $\eta\colon \mathfrak R^{\boldbar {\mathbf Q}\fr  s}_{\mathbf v, \Theta}\to \mathfrak F_{r,c,\ell}$ 
which is an isomorphism of functors.
\label{isofunct}\end{thm}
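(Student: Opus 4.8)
The plan is to build the natural transformation $\eta$ in both directions by using the classical ADHM-type monadic description of framed sheaves on $\P^2$, now applied to a nested pair. First I would recall that for a torsion-free sheaf $F$ on $\P^2$ of rank $r$, framed on $\ell_\infty$, with $c_2(F)=c+\ell$, the Beilinson spectral sequence produces a monad whose cohomology is $F$, and the linear data of that monad is exactly an ADHM datum $(b_1,b_2,i,j)$ on a vector space $V$ of dimension $c+\ell$ together with $W\cong\C^r$, satisfying $b_1b_2-b_2b_1+ij=0$ and an appropriate stability (globally generated) condition; this is the statement that the ADHM quiver variety represents $\mathfrak F^{\P^2,\ell_\infty}_{r,0,c+\ell}$. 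The new ingredient is the subsheaf $E\subset F$ with $F/E$ zero-dimensional, of length $\ell$, supported away from $\ell_\infty$: applying the same monad construction to $E$ gives a second ADHM datum $(b_1',b_2',i',j')$ on $V'$ of dimension $\ell+(c+\ell)$... no --- here one must be careful, since $c_2(E)=c+2\ell$, so the naive second ADHM space would have dimension $c+2\ell$, not $\ell$. The correct move, which is the heart of the Jardim--von Flach argument, is to encode the inclusion $E\hookrightarrow F$ not by two independent monads but by a morphism of monads, which upon taking linear data becomes the single map $\phi\colon V'\to V$ with $V'$ of dimension $\ell$, intertwining the $b$'s and killing $j$ (the relations $b_i\phi=\phi b_i'$, $j\phi=0$, and $b_1'b_2'=b_2'b_1'$ because the "source" vertex carries the cokernel data which has no $i,j$). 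So the first half of the proof is: given a family of framed flags $(E,F,\phi)$ over $T$, produce functorially the locally free sheaves $\W_\infty=W\otimes\cO_T$, $\W_2$ (rank $c+\ell$) and $\W_1$ (rank $\ell$) on $T$ together with the arrows, check the relations hold fiberwise hence as sheaf maps, and check $\Theta$-stability of each fiber follows from torsion-freeness of $E$, $F$ and disjointness of $F/E$ from $\ell_\infty$ --- here the sign conditions $\theta'>0$, $\theta+\theta'<0$ are exactly what encode "$F$ globally generated in the relevant sense" and "$E\subset F$ with the right cokernel."

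Conversely, and this is the second half, I would start from a family of $\Theta$-stable framed representations of $\boldbar{\mathbf Q}$ over $T$ and reconstruct the flag. The key trick, flagged in the introduction as "regard the spaces of representations of an enhanced quiver as a space of morphisms between two copies of" the base ADHM quiver: from $(\W_1,\W_2,\W_\infty,b',b,i,j,\phi)$ one extracts two ADHM data, the evident one $(\W_2,\W_\infty;b_1,b_2,i,j)$ giving $F$, and a derived one on $\W_1\oplus(\text{something})$ --- actually the cleanest formulation is that $E$ is defined directly as the kernel of the induced map $F\to Q$ where $Q$ is the zero-dimensional sheaf built from $(\W_1;b_1',b_2')$ (a sheaf of length $\ell$ on $\C^2$, away from $\ell_\infty$), the map $F\to Q$ being induced by $\phi$ together with $j\phi=0$ which guarantees it factors appropriately. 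One checks: (i) the stability of the representation forces the relevant monad complex to have cohomology only in the middle, so $F$ is an honest framed torsion-free sheaf (this is the half of Nakajima/ADHM one imports wholesale); (ii) the commutation relations $b_i\phi=\phi b_i'$ make $\phi$ into a morphism of the two monads, so the induced $F\to Q$ is well-defined and surjective (surjectivity again from stability, now for $\W_1$); (iii) $E:=\ker(F\to Q)$ is torsion-free with $F/E=Q$ of length $\ell$ supported off $\ell_\infty$ because $Q$ is; (iv) the flag framing is inherited from that of $F$. Functoriality in $T$ is automatic because every construction is made of pullback-compatible sheaf operations (kernels of maps of vector bundles stay flat here because the cokernels are flat of the right rank by the length-$\ell$ hypothesis). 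Finally I would verify $\eta$ and its inverse are mutually inverse natural transformations by checking they recover the identity on each family, which reduces to the fact that the ADHM monad reconstruction is inverse to taking linear data --- the Beilinson-spectral-sequence isomorphism --- together with the elementary observation that $\ker$ and the associated quotient $Q$ are recovered from each other.

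The main obstacle, I expect, is not the construction at closed points --- that is essentially Jardim--von Flach's original argument --- but rather the \emph{flatness and base-change bookkeeping} needed to promote everything to families over an arbitrary base $T$, so that $\eta$ is a genuine natural transformation of functors and not merely a bijection on closed points. Concretely: one must show that the rank-$\ell$ and rank-$(c+\ell)$ bundles extracted from a family of flags are \emph{locally free} (not just coherent and generically of the right rank), which uses the semicontinuity/constancy of the cohomology of the fiberwise monads together with the hypothesis that $c_2$, $c_1$ and the length $\ell$ are locally constant on $T$; and conversely that $E=\ker(F\to Q)$ is flat over $T$ with the Hilbert polynomial of a torsion-free sheaf with the prescribed $c_2$, which follows because $Q$ is flat over $T$ of fiber length $\ell$ (so the kernel's fibers have constant invariants, hence flatness by the local criterion). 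A secondary technical point is checking that the stability condition is an open condition and that the chosen chamber for $\Theta$ is the correct one --- i.e.\ that $\Theta$-stability of the enhanced representation is equivalent to the conjunction of stability of the $F$-datum and the surjectivity making $E\subset F$ a genuine flag with zero-dimensional quotient off $\ell_\infty$; this is a combinatorial verification with the subrepresentation inequalities, using precisely $\theta'>0$ and $\theta+\theta'<0$, and it is where one must be most careful to match conventions with \cite{JvF,JvFL}.
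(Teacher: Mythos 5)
Your overall strategy --- encode the flag by ADHM data and read a representation of $\boldbar{\mathbf Q}$ as a morphism between two representations of the standard ADHM quiver --- is the same as the paper's, but the reconstruction direction as you state it has a genuine gap, and the numerology is inverted. In $\mathfrak F_{r,c,\ell}$ one has $c_2(F)=c$ and $c_2(E)=c+\ell$: the \emph{subsheaf} carries the larger second Chern class, so the middle vertex (dimension $c+\ell$) produces $E$, not $F$. You instead take the cohomology of the monad attached to $(\W_2,\W_\infty;b_1,b_2,i,j)$, call it $F$, and define $E$ as the kernel of a map $F\to Q$ ``induced by $\phi$''. No such induced map exists: $\phi$ gives a chain map \emph{from} the unframed complex built from $(\W_1;b_1',b_2')$ \emph{to} the complex built from $\W_2$, and the induced maps on cohomology are $0\to\mathcal H^0$ in degree $0$ and $Q\to 0$ in degree $1$; neither has the shape you need, and a map from an $\mathcal H^0$ to an $\mathcal H^1$ cannot come from a chain map. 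The surjection onto $Q$ is a \emph{connecting homomorphism}: one must first use stability to see that $\phi$ is fiberwise injective, form the quotient representation $\X''=\X/\phi(\X')$ (dimension $c$, locally free underlying bundle), apply the exact monad functor to $0\to\X'\to\X\to\X''\to0$, and extract $0\to\mathcal H^0(E_{\X})\to\mathcal H^0(E_{\X''})\to\mathcal H^1(E_{\X'})\to0$ from the long exact sequence. Then $F=\mathcal H^0(E_{\X''})$ has $c_2=c$, $E=\mathcal H^0(E_{\X})$ is the subsheaf, and $F\to Q$ is the connecting map. Without forming $\X''$ you never produce the ambient sheaf with the correct invariants; your $E=\ker(F\to Q)$ would have $c_2=c+2\ell$ and land in the wrong functor.

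Two further points. First, the vanishing $\mathcal H^0(E_{\X'})=0$ for the degenerate complex (which has $W=0$, so there is no stability to invoke) is not an instance of ``cohomology concentrated in the middle by stability''; it requires a separate argument --- one checks $\operatorname{im}\alpha'=\ker\beta'$ on $\ell_\infty$, so $\mathcal H^0(E_{\X'})$ has rank zero, and then one uses that it injects into the torsion-free sheaf $\mathcal H^0(E_{\X})$. Second, in the forward direction the space at the first vertex is not literally ``the ADHM datum of the cokernel'': it is the kernel of the surjection of ADHM data induced by $E\hookrightarrow F$, which happens to have dimension $\ell$ and commuting endomorphisms, with $\phi$ its inclusion. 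Your flatness and base-change remarks in the last paragraph are sound and consistent with the paper's use of exactness of the monad functor together with flatness of kernels and quotients.
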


We shall also need to consider the standard ADHM quiver, which we shall denote by ${\mathbf Q}$:
\begin{equation} \label{ADHMquiv}
	\begin{tikzcd}
				\bullet  \arrow[out=300,in=240,loop, "b_1"]\arrow[out=120,in=60,loop,"b_2"] \arrow[out=30, rr, in=150,"j"]  &	 &\bullet \arrow[out=210,ll, in=330,"i"]
	\end{tikzcd}
\end{equation}
with the relation 
\begin{equation}   b_1b_2-b_2b_1+ij=0. \label{smallrels} \end{equation}

We develop now some theory which will be needed to prove Theorem \ref{JvFthm}. 
We introduce the following categories:
\begin{itemize}\itemsep=-2pt \item 
the category $\A _{\mathbf Q}$ of families of representations of the ADHM quiver ${\mathbf Q}$ with the relations \eqref{smallrels}. An object  in $\A _{\mathbf Q}$ is a collection  $(T,\V,\W,B_1,B_2,I,J)$, where $T$ is a scheme, 
 $\V$ and $\W$ are vector bundles on $T$, and 
 $$
 B_1,B_2 \in \operatorname{End}(\V), \quad I \in \operatorname{Hom}(\W,\V),\quad  J \in \operatorname{Hom}(\V,\W)$$
 satisfying the condition
 \begin{equation}\label{relADHM}  B_1B_2-B_2B_1 +IJ =0. \end{equation}
{Let $\A _{\mathbf Q}^s$ be the full subcategory of families of representations
 that are stable with respect to the standard stability condition \cite{Nakabook}.}
 \item The category $\Kom_{\P^2}$  of   families of complexes of coherent sheaves on $\mathbb P^2$. Objects are given by a scheme $T$ and a complex of coherent sheaves on $T\times \mathbb P^2$; the morphisms are the obvious ones. $\Kom_{\P^2}^{\mbox{\tiny flat}}$ is the full subcategory of   families of complexes whose cohomology sheaves are flat over $T$. 
\end{itemize} 
$\A _{\mathbf Q}$ and $\Kom_{\P^2}$  are categories   over the category $\mbox{\bf Sch}$ of schemes.\footnote{Actually, since they admit pullbacks, both categories are fibered categories over  $\mbox{\bf Sch}$.
See \cite{Vistoli-notes}, Definition 3.5, or \cite{stacks-project}, Section 4.33.}
 Their fiber categories over $T=\operatorname{Spec} \C$ are the category
of representations of the ADHM quiver ${\mathbf Q}$   (and then $\V$, $\W$ are just vector spaces) and the category of complexes of coherent sheaves over $\P^2$, respectively. If $T$ is a scheme, we denote by $\A_{\mathbf Q}(T)$ the fiber of $\A_{\mathbf Q}$ over $T$, i.e., the category of families of representations of ${\mathbf Q}$ parameterized by $T$, with a similar meaning for $\Kom_{\P^2}(T)$.

\begin{rem}\label{remNaka}
By  Nakajima's work we know that, fixing the dimension vector $\mathbf v=(c,r)$,  the corresponding functor of families of  stable representations of the quiver ${\mathbf Q}$ is represented by a scheme which
 is isomorphic to the moduli space $\mathcal M(r,c)$ of  torsion-free sheaves on $\P^2$, of rank $r$ and second Chern class $c$, with a framing to the trivial sheaf on a fixed line.
 \end{rem}

We   introduce a functor $$\K_{\mathbf Q} \colon \A _{\mathbf Q} \to\Kom_{\P^2}$$ of categories over 
$\mathbf{Sch}$; this is a relative version of the ``absolute'' standard functor 
 which  associates a complex with a   representation of the ADHM quiver.  The functor $\K_{\mathbf Q}$
 associates with a family of   representations of ${\mathbf Q}$ parameterized by a scheme $T$ the corresponding family of 3-term complexes on $\P^2\times T$. Note that as we are not requiring   the representations to be  stable the 3-term complex may have nontrivial cohomology in every degree, in particular, it may not be a monad. If $\X=(T,\V,\W,B_1,B_2,I,J)$ is an object in $\A _{\mathbf Q}$, 
   then  $\K_{\mathbf Q}(\X)$ is the following complex supported in degree $-1$, 0 and 1, whose terms are sheaves on $T\times\P^2$:
 $$ 0 \to  \V \boxtimes  \cO_{\P^2}(-1) \xrightarrow{\ \alpha\ }  (\V\oplus \V \oplus \W)\boxtimes \cO_{\P^2}\xrightarrow{\ \beta\ }  \V \boxtimes   \cO_{\P^2}(1)\to 0$$
where  the morphisms $\alpha$, $\beta$ are given by
 $$ \alpha = \begin{pmatrix} zB_1 +x 1_{\V} \\ zB_2 + y1_{\V} \\ zJ \end{pmatrix}, \qquad
 \beta = \begin{pmatrix} -zB_2 -y1_{\V} ,\ zB_1+x1_{\V},\ zI\end{pmatrix}  $$
 with $(x,y,z)$ homogeneous coordinates in $\P^2$.  Note that $\beta\circ\alpha=0$ due to the relation \eqref{relADHM}.
 
{ A morphism $\xi= (f,\xi_1,\xi_2)$  of families of  representations
 $$ \X=(S,\V,\W,B_1,B_2,I,J) \xrightarrow{\ \xi \ }  \tilde{ \X}=(T,\tilde \V,\tilde \W,\tilde B_1,\tilde B_2,\tilde I, \tilde J)$$
 is a morphism $f\colon S\to T $ and a pair of morphisms $\xi_1 \colon \V \to f^\ast\tilde \V$, $\xi_2 \colon \W \to f^\ast\tilde \W$
 satisfying
 $$\xi_1\circ B_1 = f^\ast\tilde B_1 \circ \xi_1,\qquad \xi_1\circ B_2 = f^\ast\tilde B_2 \circ \xi_1, \qquad \xi_2\circ J = f^\ast\tilde J \circ \xi_1, \qquad \xi_1\circ I =f^\ast \tilde I \circ \xi_2.$$}
 The morphism $\K_{\mathbf Q}(\xi)\colon \K_{\mathbf Q}(\X) \to \K_{\mathbf Q}(\tilde{\X})$ between the corresponding monads is given by the diagram
\begin{equation} \label{morP2}\xymatrix{
0 \ar[r] &  \V \boxtimes  \cO_{\P^2}(-1) \ar[r]^(.42)\alpha \ar[d]_{\xi_1 \times \operatorname{Id}} &
  (\V\oplus \V \oplus \W) \boxtimes \cO_{\P^2} \ar[r]^(.58)\beta \ar[d]_{(\xi_1 \oplus \xi_1 \oplus \xi_2)\times  \operatorname{Id} }&
  \V \boxtimes \cO_{\P^2}(1) \ar[d]^{\xi_1 \times \operatorname{Id}} \ar[r] & 0\\
0 \ar[r] &  f^\ast\tilde \V \boxtimes   \cO_{\P^2}(-1) \ar[r]^(.42){f^\ast\tilde\alpha} & 
 f^\ast  (\tilde\V\oplus \tilde\V \oplus \tilde\W)\boxtimes \cO_{ \P^2}  \ar[r]^(.58){f^\ast\tilde\beta} &  
   f^\ast\tilde \V \boxtimes  \cO_{\P^2}(1)  \ar[r] & 0
    }
  \end{equation}
 
 \medskip
 \begin{prop} For every scheme $T$, the functor $\K_{\mathbf Q} (T)\colon \A _{\mathbf Q}(T) \to\Kom_{\P^2}(T)$ is exact and   faithful.
 \end{prop}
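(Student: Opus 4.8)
The plan is to verify exactness and faithfulness of $\K_{\mathbf Q}(T)$ by unpacking both notions at the level of the explicit 3-term complex defined above. Since $\K_{\mathbf Q}(T)$ sends an object $(T,\V,\W,B_1,B_2,I,J)$ to a fixed complex whose terms are built from $\V$, $\W$ by tensoring with the fixed line bundles $\cO_{\P^2}(-1),\cO_{\P^2},\cO_{\P^2}(1)$ and taking direct sums, and sends a morphism $\xi=(\operatorname{id}_T,\xi_1,\xi_2)$ (a morphism of families \emph{over a fixed base} $T$, so $f=\operatorname{id}$ and $f^\ast=\operatorname{id}$) to the chain map \eqref{morP2} built diagonally from $\xi_1$ and $\xi_2$, the functor is visibly $\C$-linear on Hom-sets. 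So the statement really concerns the behaviour of the assignment $\V\mapsto \V\boxtimes\cO_{\P^2}(k)$ and the diagonal-embedding recipe for morphisms.

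First I would establish faithfulness. A morphism $\xi=(\operatorname{id}_T,\xi_1,\xi_2)$ in $\A_{\mathbf Q}(T)$ is determined by the pair $(\xi_1,\xi_2)$, and $\K_{\mathbf Q}(T)(\xi)$ is the chain map whose degree-$(-1)$ component is $\xi_1\times\operatorname{Id}_{\cO(-1)}$, whose degree-$0$ component is $(\xi_1\oplus\xi_1\oplus\xi_2)\times\operatorname{Id}_{\cO}$, and whose degree-$1$ component is $\xi_1\times\operatorname{Id}_{\cO(1)}$. If this chain map is zero, then in particular $\xi_1\times\operatorname{Id}_{\cO_{\P^2}(-1)}=0$ as a morphism $\V\boxtimes\cO_{\P^2}(-1)\to f^\ast\tilde\V\boxtimes\cO_{\P^2}(-1)$; applying $\pi_{T\ast}$ (pushforward along the projection to $T$) after twisting by $\cO_{\P^2}(1)$, or simply restricting to a fibre over any closed point of $\P^2$ in the second factor, forces $\xi_1=0$, and the degree-$0$ component then forces $\xi_2=0$. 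Hence $\xi=0$, which is faithfulness. (One could phrase this more cleanly by noting that $-\boxtimes\cO_{\P^2}(k)$ composed with $\pi_{T\ast}(-\otimes\cO_{\P^2}(-k))$ is the identity on $\operatorname{QCoh}(T)$, so $\V\mapsto\V\boxtimes\cO_{\P^2}(k)$ is a faithful functor.)

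Next, exactness. Since $\A_{\mathbf Q}(T)$ and $\Kom_{\P^2}(T)$ are abelian (or at least exact) categories, I would take a short exact sequence $0\to\X'\to\X\to\X''\to 0$ in $\A_{\mathbf Q}(T)$, which amounts to short exact sequences $0\to\V'\to\V\to\V''\to0$ and $0\to\W'\to\W\to\W''\to0$ of locally free sheaves on $T$ compatible with all the $B_i,I,J$, and check that applying $\K_{\mathbf Q}(T)$ yields a short exact sequence of complexes. Degreewise this is exactly the question of whether $0\to\V'\boxtimes\cO_{\P^2}(k)\to\V\boxtimes\cO_{\P^2}(k)\to\V''\boxtimes\cO_{\P^2}(k)\to0$ and the analogous sequence with $\V'\oplus\V'\oplus\W'$, etc., are exact — and they are, because $\cO_{\P^2}(k)$ is locally free, hence flat, so $-\boxtimes\cO_{\P^2}(k)=\pi_T^\ast(-)\otimes\pi_{\P^2}^\ast\cO_{\P^2}(k)$ is an exact functor (both pullback along the flat projection and tensoring with a locally free sheaf are exact), and finite direct sums preserve exactness. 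Compatibility of the embeddings with the differentials $\alpha,\beta$ is immediate since those are built from the $B_i,I,J$ which by hypothesis commute with the maps $\V'\hookrightarrow\V\twoheadrightarrow\V''$, $\W'\hookrightarrow\W\twoheadrightarrow\W''$.

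**The main obstacle** is essentially bookkeeping rather than a genuine difficulty: one must be careful that "exact functor between fibred categories over $\mathbf{Sch}$" is being interpreted fibrewise over each fixed $T$ (where the relevant categories are additive with kernels and cokernels), and one should pin down precisely which exact structure is meant on $\A_{\mathbf Q}(T)$ and on $\Kom_{\P^2}(T)$ — presumably the one in which short exact sequences are the levelwise ones in $\operatorname{Coh}(T\times\P^2)$, respectively the ones coming from short exact sequences of the underlying vector bundles on $T$. Once that is fixed, every step reduces to the single fact that twisting by the fixed locally free sheaf $\cO_{\P^2}(k)$ and pulling back along $\pi_T$ are exact, together with the triviality that the diagonal morphism recipe $(\xi_1,\xi_2)\mapsto(\xi_1,\xi_1\oplus\xi_1\oplus\xi_2,\xi_1)$ is injective. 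I would therefore present the proof as: (i) $\C$-linearity of $\K_{\mathbf Q}(T)$ on Hom-sets is clear from \eqref{morP2}; (ii) faithfulness follows by restricting the degree-$(-1)$ and degree-$0$ components to a fibre of the $\P^2$-factor; (iii) exactness follows degreewise from flatness of $\cO_{\P^2}(\pm1)$ and exactness of $\pi_T^\ast$.
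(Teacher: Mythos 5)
Your proof is correct. The paper itself gives no argument here --- it simply states that the proof of Proposition 2.3.5 in the thesis \cite{Pat} applies verbatim --- and your direct verification (faithfulness by recovering $\xi_1,\xi_2$ from the components of the chain map, e.g.\ by restricting to a fibre of the $\P^2$-factor, and exactness degreewise from flatness of $\cO_{\P^2}(k)$ and of the projection $\pi_T$) is exactly the standard argument that citation stands in for. Your remark about fixing the exact structure on $\A_{\mathbf Q}(T)$ (short exact sequences being the levelwise ones of locally free sheaves) is a sensible piece of bookkeeping that the paper leaves implicit.
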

 \begin{proof} The proof  of  Proposition 2.3.5 in \cite{Pat} applies verbatim. 
 \end{proof}
 
The next result requires that the representations we consider are framed and stable. So we define $\A_{\mathbf Q}^{\fr s}$ as the subcategory
of $\A_{\mathbf Q}$ whose objects are families of  framed representations of ${\mathbf Q}$, stable with respect to the standard stability condition. Note that this  is not  a full subcategory as the morphisms at the framing vertex are restricted.
 
 \begin{prop}  $\K_{\mathbf Q}$ maps the subcategory $\A_{\mathbf Q}^{\fr s}$ to the subcategory $\Kom_{\P^2}^{\mbox{\tiny \rm flat}}$. \label{propflat}
 \end{prop}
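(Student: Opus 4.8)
The goal is to show that for a framed stable representation of the ADHM quiver $\mathbf Q$ — in a family over a scheme $T$ — the complex $\K_{\mathbf Q}(\X)$ has all its cohomology sheaves flat over $T$, so that it lies in $\Kom_{\P^2}^{\text{\tiny flat}}$. The strategy is to reduce flatness over $T$ to a pointwise statement about the fibers, exploiting that stability is an open condition and that over a point $t\in T$ the classical ADHM construction tells us exactly what the cohomology is.

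\smallskip
\noindent\textbf{Step 1: fiberwise analysis.} First I would recall the classical absolute picture: if $(\V,\W,B_1,B_2,I,J)$ is a \emph{stable} framed representation over $\operatorname{Spec}\C$, then the map $\alpha$ is injective (as a sheaf map, indeed fiberwise injective away from a finite set) and $\beta$ is surjective, so the complex $\K_{\mathbf Q}$ is a monad: its only cohomology sits in degree $0$ and is a torsion-free sheaf $E$ on $\P^2$ with $\rk E = r$, $c_2(E)=c$, framed on the line at infinity. (This is the content of Nakajima's construction, cf. Remark \ref{remNaka}.) In particular, at every closed point $t\in T$ the restricted complex $\K_{\mathbf Q}(\X)_t = \K_{\mathbf Q}(\X_t)$ has $\mathcal H^{-1}=\mathcal H^{1}=0$ and $\mathcal H^0$ torsion-free of fixed Hilbert polynomial $P$ (depending only on $r$, $c$).

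\smallskip
\noindent\textbf{Step 2: from fiberwise monad to relative monad.} The terms of $\K_{\mathbf Q}(\X)$ are $T$-flat (they are of the form $\mathcal V\boxtimes\cO_{\P^2}(k)$ with $\mathcal V$ locally free on $T$). I would argue that $\alpha$ is injective with $T$-flat cokernel and $\beta$ is surjective, by a standard base-change/semicontinuity argument: surjectivity of $\beta_t$ for all $t$ together with $T$-flatness of the target $\V\boxtimes\cO_{\P^2}(1)$ gives that $\operatorname{coker}\beta$ restricts to $0$ on each fiber, hence $\operatorname{coker}\beta=0$ by Nakayama applied to the sheaf on $T\times\P^2$ pushed to $T$; similarly one runs the argument for $\alpha$, controlling $\operatorname{coker}\alpha$ by the fact that its fiberwise version has the fixed Hilbert polynomial $P$ of a torsion-free sheaf and is $T$-flat precisely when the fiber dimensions of the relevant higher direct images are locally constant — which they are, being determined by $P$. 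Consequently $\K_{\mathbf Q}(\X)$ has vanishing $\mathcal H^{\pm1}$ and $\mathcal H^0 = \operatorname{coker}\alpha$ a $T$-flat coherent sheaf.

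\smallskip
\noindent\textbf{Step 3: functoriality / compatibility with restriction.} Since $\K_{\mathbf Q}$ is a functor of categories over $\mathbf{Sch}$ (it commutes with pullback, by construction of the morphisms in \eqref{morP2}), the formation of $\K_{\mathbf Q}(\X)$ is compatible with base change $T'\to T$, which legitimizes identifying $\K_{\mathbf Q}(\X)_t$ with $\K_{\mathbf Q}(\X_t)$ in Step 1. It remains to note that $T$-flatness of $\mathcal H^0$ is exactly the membership condition for $\Kom_{\P^2}^{\text{\tiny flat}}$ (the other two cohomology sheaves vanish, hence are trivially flat).

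\smallskip
\noindent\textbf{Main obstacle.} The delicate point is Step 2: passing from "each $\alpha_t$ is injective with torsion-free cokernel" to "$\operatorname{coker}\alpha$ is flat over $T$''. One must be careful that injectivity of $\alpha_t$ for every $t$ does not automatically give $T$-flatness of $\operatorname{coker}\alpha$ unless the fibers of $\operatorname{coker}\alpha$ have locally constant Hilbert polynomial; here that constancy is guaranteed because on each fiber the cokernel is the fixed torsion-free sheaf of the ADHM construction with invariants $(r,c)$. Equivalently, one invokes the local criterion of flatness fiberwise together with constancy of $\chi$, or one cites the corresponding statement from the absolute/relative monad machinery — indeed, as with the preceding proposition, I expect the cleanest route is simply to say that the proof of the analogous flatness statement in \cite{Pat} (or \cite{Nakabook}) goes through verbatim in the relative setting, the only genuinely new input being the openness of stability, which ensures the fiberwise monad property holds identically across $T$.
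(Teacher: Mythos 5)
Your first step agrees with the paper's: stability of each fiber representation makes $\alpha$ fiberwise injective and $\beta$ fiberwise surjective, so $\K_{\mathbf Q}(\X)$ is a family of monads with $T$-flat locally free terms, and everything reduces to showing that the middle cohomology of such a family is flat over $T$. The gap is in how you carry out that reduction. You propose to deduce $T$-flatness of $\operatorname{coker}\alpha$ from the constancy of the Hilbert polynomials of its fibers; but that criterion requires the base to be reduced (indeed integral), whereas the schemes $T$ in this paper are only assumed connected and of finite type over $\C$ --- and for a statement about moduli functors one cannot discard non-reduced $T$ (e.g.\ $\operatorname{Spec}\C[\epsilon]$, which is precisely where such functorial statements are needed). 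Over a non-reduced base, constancy of the fiberwise Hilbert polynomial says nothing about flatness. There is also a slip in the identification $\mathcal H^0=\operatorname{coker}\alpha$: the middle cohomology is $\ker\beta/\operatorname{im}\alpha$, which is a proper subsheaf of $\operatorname{coker}\alpha$ (namely the kernel of the induced surjection $\operatorname{coker}\alpha\to\V\boxtimes\cO_{\P^2}(1)$); this particular point is repairable, since flatness of $\operatorname{coker}\alpha$ would still yield flatness of that kernel, but as written the conclusion is drawn about the wrong sheaf.

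The paper avoids all of this with a short homological argument that needs neither reducedness nor Hilbert polynomials: from $0\to\ker\beta\to\cE\to\cE''\to0$ with $\cE$ and $\cE''$ flat over $T$ one gets that $\ker\beta$ is $T$-flat (the kernel of a surjection of flat sheaves onto a flat sheaf is flat, by the long exact sequence of $\operatorname{Tor}$), and then Lemma 2.1.4 of \cite{HL-book} --- a form of the local criterion of flatness --- applied to $0\to\operatorname{im}\alpha\to\ker\beta\to\mathcal H\to0$, using that $\alpha$ restricts injectively to each fiber $\{t\}\times\P^2$ (this is where stability enters), gives that $\mathcal H$ is $T$-flat. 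If you replace your semicontinuity/Hilbert-polynomial step with this argument --- which is essentially the ``local criterion of flatness'' you allude to at the end, minus the appeal to constancy of $\chi$ --- your proof becomes correct and coincides with the paper's.
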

 
 \begin{proof} The stability of the family of representations on which we act by $\K_{\mathbf Q}$ implies that the morphism $\alpha$ is injective and $\beta$ is surjective.
 Then we may reduce to prove  the following fact: if
 $$ 0 \to \cE ' \xrightarrow{\ \alpha\ } \cE   \xrightarrow{\ \beta\ } \cE '' \to 0 $$
 is a complex of families of locally free coherent sheaves on $T\times\P^2$, with $\alpha$   injective and $\beta$   surjective,   then the cohomology sheaf
 $\mathcal H = \ker \beta/\operatorname{im} \alpha$ is flat over $T$.  To prove this we first consider the exact sequence
 $$ 0 \to  \ker \beta \to  \cE  \to \cE '' \to 0 ,$$
 where  $ \cE  $ and $ \cE ''$ are flat over $T$, so that $\ker \beta$ is flat as well. Then one applies Lemma 2.1.4 in \cite{HL-book}
 to the exact sequence
 $$ 0 \to \operatorname{im} \alpha \to \ker\beta \to \mathcal H \to 0 .$$
\end{proof} 

\begin{rem} The image $\K_{\mathbf Q}(\A _{\mathbf Q}^{\heartsuit s})$ is the subcategory of $\Kom_{\P^2}^{\mbox{\tiny \rm flat}}$
whose objects are families of monads for the ADHM quiver (in particular their cohomology is flat over $T$).
\end{rem} 
 
 Now we construct the natural transformation $\eta\colon \mathfrak R^{\boldbar {\mathbf Q}\fr  s}_{\mathbf v, \Theta}\to \mathfrak F_{r,c,\ell}$. 
 The trick for doing that is to regard a representation of the enhanced ADHM quiver as a morphism of representations of the standard  ADHM quiver. Let
 $   (T,\V',\V,\W,B_1',B_2',B_1,B_2,I,J,\Phi)$ be a family of   representations of the enhanced ADHM quiver, framed at the vertex 0. So $T$ is a scheme, and $\V'$ and $\V$ are vector bundles on $T$ of rank $c$ and $c+\ell$, respectively. $\W$ is the trivial bundle $W\otimes \cO_T$ for
 some fixed vector space $W$ of dimension $r$. Moreover,
 $$B_1',B_2'\in\operatorname{End}(\V'),\quad B_1,B_2\in\operatorname{End}(\V),\quad I\in \operatorname{Hom}(\W,\V),\quad J\in \operatorname{Hom}(\V,\W),\quad\Phi\in \operatorname{Hom}(\V',\V).$$
 Assume that this representation is stable as in Theorem \ref{JvFthm}. This implies that $\Phi$ is injective, and then $\Phi$  defines a morphism of  families of representations of the standard ADHM quiver described by the following diagram
\begin{equation}
	\begin{tikzpicture} 
		
		\node at (0,0) {$\V'$} edge [thick, out=95, in=155, loop] () edge [thick, out=85, in=25, loop]  () ;  \node at (0,-2) {$0$}; \node at (-0.8, 0.85) {$B_1'$}; \node at (0.9, 0.85){$B_2'$};
		\draw[->,thick,bend left] (0.2,-0.2) to (0.2,-1.8);
		\draw[->,thick,bend left] (-0.2,-1.8) to (-0.2,-0.2); 
		\node at (3,0) {$\V\phantom{'}$} edge [thick, out=95, in=155, loop] () edge [thick, out=85, in=25, loop]  (); \node at (2.15, 0.85) {$B_1$}; \node at (3.85, 0.85){$B_2$}; \node at (3,-2) {$\W$};    
		\draw[->,thick,bend left] (3.2,-0.2) to (3.2,-1.8);
		\draw[->,thick,bend left] (2.8,-1.8) to (2.8,-0.2);
		\draw[->,thick] (0.4,0) to (2.6,0); \draw[->,thick] (0.4,-2) to (2.6,-2);
		\node at (1.5,0.3) {$\Phi$} ; \node at (2.3,-1) {$I$};  \node at (3.7,-1) {$J$};
	\end{tikzpicture}
\end{equation}
Let $\V'' = \V/\Phi(\V')$; {note that $\V''$ is locally free (of rank $\ell$) as $\Phi$ is injective on every fiber of $\V'$. } The morphisms $B_1$, $B_2$, $B_1'$, $B_2'$, $I$, $J$ induce morphisms
$$B_1'',B_2''\in\operatorname{End}(\V''),\quad I''\in \operatorname{Hom}(\W,\V''),\quad J''\in \operatorname{Hom}(\V'',\W) $$
which define a quotient family of representations of the ADHM quiver. This is represented by the diagram
\begin{equation}
	\begin{tikzpicture} 
		\node at (0,0) {$\V'\phantom{'}$}  edge [thick, out=95, in=155, loop] () edge [thick, out=85, in=25, loop]  () ; \node at (0,-2) {$0$};  \node at (-0.95, 0.85) {$B_1'$}; \node at (1, 0.85){$B_2'$};  
		\draw[->,thick,bend left] (0.2,-0.2) to (0.2,-1.8);
		\draw[->,thick,bend left] (-0.2,-1.8) to (-0.2,-0.2); 
		\node at (3,0) {$\V\phantom{''}$} edge [thick, out=95, in=155,loop] () edge [thick, out=85, in=25, loop] (); \node at (3,-2) {$\W$};  \node at (2.05, 0.85) {$B_1$}; \node at (3.95, 0.85){$B_2$};  
		\draw[->,thick,bend left] (3.2,-0.2) to (3.2,-1.8);
		\draw[->,thick,bend left] (2.8,-1.8) to (2.8,-0.2);
		\draw[->,thick] (0.4,0) to (2.6,0); \draw[->,thick] (0.4,-2) to (2.6,-2);
		\node at (1.5,0.3) {$\Phi$} ; \node at (2.3,-1) {$I$};  \node at (3.7,-1) {$J$};
		\node at (6,0) {$\V''$} edge [thick, out=95, in=155, loop] () edge [thick, out=85, in=25, loop]  (); \node at (6,-2) {$\W$};  \node at (5.1, 0.85) {$B_1''$}; \node at (7.05, 0.85){$B_2''$};  
		\draw[->,thick,bend left] (6.2,-0.2) to (6.2,-1.8);
		\draw[->,thick,bend left] (5.8,-1.8) to (5.8,-0.2);
		\draw[->,thick] (3.4,0) to (5.6,0); \draw[style=double,thick] (3.4,-2) to (5.6,-2);
		\node at (6.7,-1) {$J''$};  \node at (5.3,-1) {$I''$};
	\end{tikzpicture}
\end{equation}
i.e., we have an exact sequence of families of representations of the standard ADHM quiver
$$ 0 \to \X' \to \X \to\X'' \to 0.$$
Here $ \X$ and  $\X''$ are families of stable representations (for the stability of $\X''$ see \cite[p.~148]{JvF}). 
Applying the exact functor $\K_{\mathbf Q}$ we obtain an exact sequence of complexes of coherent sheaves  on $T\times\P^2$
$$ 0 \to E_{\X'} \to E_{\X} \to E_{\X''} \to 0 $$
whose nonzero terms are in degree $-1$, 0 and 1.  This exact sequence of complexes makes up the following commutative diagram with exact rows, whose columns are the complexes corresponding to  $\X' $, $\X$, $\X'' $, respectively:
$$\xymatrix{& 0 \ar[d]  & 0 \ar[d] & 0 \ar[d] \\ 
0 \ar[r] & \V '\boxtimes \cO_{\P^2}(-1) \ar[d]^{\alpha'} \ar[r] &  \V \boxtimes \cO_{\P^2}(-1)  \ar[d]^\alpha  \ar[r] &  \V'' \boxtimes \cO_{\P^2}(-1)  \ar[d]^{\alpha''}   \ar[r] & 0 \\
0 \ar[r] & (\V' \oplus \V') \boxtimes  \cO_{\P^2} \ar[d]^{\beta'}   \ar[r] & (\V\oplus \V\oplus \W) \boxtimes \cO_{\P^2}  \ar[d]^\beta    \ar[r] &  (\V''\oplus \V''\oplus \W) \boxtimes \cO_{\P^2}  \ar[d]^{\beta''}    \ar[r]  & 0 \\
0 \ar[r] &  \V '\boxtimes \cO_{\P^2}(1) \ar[d]  \ar[r]& \V \boxtimes \cO_{\P^2}(1)  \ar[d]  \ar[r] & \V'' \boxtimes \cO_{\P^2}(1)  \ar[d]  \ar[r] & 0 \\
& 0 & 0  & 0 
}
$$

Since $\X$ and $\X''$ are stable, the associated long exact cohomology sequence reduces to
$$ 0 \to \mathcal H^0 (E_{\X'}) \to \mathcal H^0(E_{\X})  \to \mathcal H^0(E_{\X''} ) \to \mathcal H^1(E_{\X'}) \to 0 $$
(note that $ \mathcal H^{-1} (E_{\X''})=0$ as $\alpha''$ is fiberwise injective, hence injective).
We show that $ \mathcal H^0 (E_{\X'}) = \ker\beta'/\operatorname{im} \alpha'=0$.  First we note that, thinking of $\ell_\infty$ as the line $z=0$ is $\P^2$, 
we may write $\alpha'$, $\beta'$  restricted to $T\times \ell_\infty$ as
$$\alpha'_{\vert T\times \ell_\infty }  = \begin{pmatrix} x 1_{\V'} \\ y 1_{\V'} \end{pmatrix}, \qquad \beta'_{\vert T\times \ell_\infty}  = \begin{pmatrix}-y 1_{\V'}, & x 1_{\V'}\end{pmatrix}.$$
As a simple computation shows,\footnote{This computation will be made in the proof of Lemma \ref{cohomdeg} in the case of Hirzebruch surfaces.} one has $\operatorname{im} \alpha'=\ker\beta'$ on $T\times \ell_\infty$  so that  $\mathcal H^0 (E_{\X'}) $ is zero on $T\times \ell_\infty$, hence it has rank 0. Then it must be zero as it injects into $ \mathcal H^0(E_{\X})$ which is torsion-free.

Moreover one has:
\begin{itemize}\itemsep=-2pt \item $F=\mathcal H^0(E_{\X''})$ is a torsion-free sheaf on $T\times\P^2$,  with a framing $\phi$ to the trivial sheaf on
$T\times\ell_\infty$, where $\ell_\infty$ is a line in $\P^2$. Moveover, for every closed point $t\in T$, the second Chern class of $F_{\vert \{t\}\times\P^2}$
is $n$. \item $F$ and $E=\mathcal H^0(E_{\X})$ are flat over $T$ by Proposition \ref{propflat} as $\X$ and $\X''$ are stable. 
\item $\mathcal H^1(E_{\X'})$ is a rank 0 coherent sheaf on $T\times\P^2$, supported away from $T\times\ell_\infty$.
For every closed point $t\in T$, the restriction of the schematic support of $\mathcal H^1(E_{\X'})$ to the fiber over $t$ is a length $\ell$ 0-cycle in $\P^2$.
\item $\mathcal H^1(E_{\X'})$ is flat over $T$ as it is a quotient of flat sheaves. (One can also prove this directly as in Proposition  \ref{propflat}.)
\end{itemize}
Thus the triple $(E,F,\phi)$ is a flat family of framed flags on $\P^2$ parameterized by the scheme $T$.
This defines the natural transformation $\eta$. One can indeed show that for any scheme morphism $f\colon T\to S$ the diagram 
$$\xymatrix{ \mathfrak{R}^{\boldbar {\mathbf Q}\fr  s}_{\mathbf v, \Theta}(S) \ar[rr]^{\mathfrak{R}^{\boldbar {\mathbf Q}\fr  s}_{\mathbf v, \Theta}(f)}  \ar[d]_{ \eta_S}& & \mathfrak{R}^{\boldbar {\mathbf Q}, \fr s}_{\mathbf v, \Theta}(T) \ar[d]^{ \eta_T}\\
	\mathfrak{F}_{r,c,\ell}(S) \ar[rr]^{\mathfrak{F}_{r,c,\ell}(f)} &&  \mathfrak{F}_{r,c,\ell}(T)}$$
	commutes.
	
To show that $\eta$ is actually an isomorphism one constructs a natural transformation going the opposite direction which is both a right and a left inverse to $\eta$. This is accomplished by tracing back the steps that led to the definition of $\eta$. Thus, given a family of framed flags on $\P^2$ with the required numerical invariants, one defines families of representations $\X$, $\X''$ of the standard ADHM quiver, with a surjection $\Psi \colon \X \to \X''$. Then one defines $\X'$ as the kernel of $\Psi$; the families $\X'$ and $\X$ now combine to yield a family of representations of the enhanced ADHM quiver $\boldbar {\mathbf Q}$. This concludes the proof of Theorem \ref{JvFthm}.

As we recalled in Section \ref{gen}, the functor $\mathfrak{F}_{r,c,\ell}$  is representable, so that there is a fine moduli scheme $\mathcal{F}_{r,c,\ell}$ for framed flags on $\P^2$ with numerical invariants $r$, $n$, $\ell$. So we have:

\begin{corol} \label{cormod} The moduli scheme  $\mathcal{M}^{\boldbar {\mathbf Q}\fr  s}_{\mathbf v, \Theta}$ representing the functor  $\mathfrak{R}^{\boldbar {\mathbf Q}\fr  s}_{\mathbf v, \Theta}$ is isomorphic to the moduli scheme  $\mathcal{F}_{r,c,\ell}$.
\end{corol}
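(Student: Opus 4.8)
The plan is to obtain the statement as a formal consequence of the Yoneda lemma applied to the isomorphism of functors $\eta$ constructed in Theorem~\ref{JvFthm}. First I would record that both functors appearing there are representable. On the geometric side, Theorem~\ref{flagrepr} applies with $(X,D)=(\P^2,\ell_\infty)$, since a line in $\P^2$ is smooth, irreducible, big and nef; hence $\mathfrak{F}_{r,c,\ell}=\mathfrak{F}^{\P^2,\ell_\infty}_{r,0,c,\ell}$ is representable by the scheme $\mathcal{F}_{r,c,\ell}$ (as was already invoked above). On the quiver side, the functor $\mathfrak{R}^{\boldbar{\mathbf Q}\fr s}_{\mathbf v,\Theta}$ of families of $\Theta$-stable framed representations of the enhanced ADHM quiver, with $\mathbf v=(\ell,c+\ell,r)$, is representable by the general theory of moduli of framed quiver representations (King's GIT construction, cf.\ \cite{KiQ}): fixing the framing at the vertex $\infty$ rigidifies the stable representations, i.e.\ kills their automorphisms, so the moduli space $\mathcal{M}^{\boldbar{\mathbf Q}\fr s}_{\mathbf v,\Theta}$ is fine.

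Next I would simply feed the isomorphism of functors $\eta\colon \mathfrak{R}^{\boldbar{\mathbf Q}\fr s}_{\mathbf v,\Theta}\to\mathfrak{F}_{r,c,\ell}$ from Theorem~\ref{JvFthm} into the Yoneda lemma: an isomorphism between two representable functors $\mbox{\bf Sch}^{\mbox{\footnotesize\rm op}}\to\mbox{\bf Set}$ is induced by a unique isomorphism of the representing objects, compatible with the two universal families. Concretely, evaluating $\eta$ and $\eta^{-1}$ at the universal families over $\mathcal{M}^{\boldbar{\mathbf Q}\fr s}_{\mathbf v,\Theta}$ and $\mathcal{F}_{r,c,\ell}$ produces mutually inverse scheme morphisms $\mathcal{M}^{\boldbar{\mathbf Q}\fr s}_{\mathbf v,\Theta}\xrightarrow{\ \sim\ }\mathcal{F}_{r,c,\ell}$.

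There is essentially no genuine obstacle in this step, since all the substance is already contained in Theorem~\ref{JvFthm}; the only point that deserves a word of care is the representability of the quiver side. If one does not wish to invoke King's construction a priori, one can instead deduce it for free: $\mathfrak{R}^{\boldbar{\mathbf Q}\fr s}_{\mathbf v,\Theta}$ is isomorphic, via $\eta$, to the representable functor $\mathfrak{F}_{r,c,\ell}$, hence is itself representable, necessarily by the same scheme $\mathcal{F}_{r,c,\ell}$; in that reading the corollary is just the observation that the object denoted $\mathcal{M}^{\boldbar{\mathbf Q}\fr s}_{\mathbf v,\Theta}$ is canonically this scheme.
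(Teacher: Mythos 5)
Your argument is correct and is essentially the paper's own: the corollary is stated there as an immediate consequence of the functor isomorphism of Theorem~\ref{JvFthm} together with the representability of $\mathfrak{F}_{r,c,\ell}$ (Theorem~\ref{flagrepr}) and of the framed-representation functor (King's construction, \cite[Prop.~5.3]{KiQ}), i.e.\ exactly the Yoneda argument you spell out. Your closing remark that representability of the quiver side could alternatively be deduced from $\eta$ itself is a harmless extra observation, not a deviation from the paper's route.
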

 
 \section{Framed sheaves on Hirzebruch surfaces} \label{framHirz}
 
 The $n$-th Hirzbruch surface will be denoted $\Sigma_n$. We shall denote by $\mathfrak h$ and $\mathfrak e$ the cohomology classes   of the sections of the fibration $\Sigma_n\to\P^1$ that square to $n$ and $-n$, respectively, and by $\mathfrak f$ the class of the fiber. We shall use $(\mathfrak h,\mathfrak f)$ as a basis for $\operatorname{Pic}(\Sigma_n)$ over $\Z$. 
 We denote by $\ell_\infty$ the image of a section having class $\mathfrak h$ and call it ``line at infinity.''  
Note that $\Sigma_n\setminus\ell_\infty$ is isomorphic to the total space $\Xi_n$ of the line bundle $\cO_{\P^1}(-n)$.
For $n \ge 1$, we shall consider torsion-free sheaves on $\Sigma_n$ that are framed on 
 $\ell_\infty$ to the trivial sheaf. 
  Due to the framing, the first Chern class of such a sheaf $\cE$ has necessarily the form $c_1(\cE) = a\mathfrak e$ with $a\in \Z$. 
 
As we already recalled in Section \ref{gen}, by the results in \cite{BruMar}
 there exists a fine moduli scheme $\mathcal M^n(r,a,c)$ parameterizing isomorphism classes of framed sheaves $\cE$ on $\Sigma_n$, with 
 $$\rk \cE =r, \qquad c_1(\cE)=a\mathfrak e, \qquad c_2(\cE) = c.$$ This moduli space was explicitly constructed in \cite{bbr} using monadic techniques; there it was shown that the moduli scheme is nonempty if and only if the  inequality $c + \frac12na(a-1) \ge 0$ holds, and when nonempty,  it  is a (smooth, quasi-projective, irreducible) variety. Moreover in \cite{bbr} a universal framed sheaf was constructed as the cohomology of a universal monad.
 
 When $r=1$, by twisting by a line bundle we can always set $a=0$. The moduli scheme $\mathcal M^n(1,0,c)$ is isomorphic to $\operatorname{Hilb}^c(\Xi_n)$, the Hilbert scheme parametrizing length $c$ 0-cycles on $\Xi_n$. Exploiting the monadic description of 
 $\mathcal M^n(1,0,c)=\operatorname{Hilb}^c(\Xi_n)$, it was shown in \cite{bblr,bblr-irr} that  $\operatorname{Hilb}^c(\Xi_n)$ is isomorphic to a moduli space of representations of a suitable quiver with relations. Let us recall some details of this correspondence. We start by drawing the relevant quivers; we shall denote them ${\mathbf Q}^n$, where   $n$ refers to
 $\Sigma_n$. The case $n=1$ must be treated separately.\footnote{A.~King in his thesis \cite{Ki} constructed the moduli space of framed vector bundles on $\Sigma_1$ as a K\"ahler quotient of a set of linear data satisfying a nondegeneracy (stability) condition. His thesis does not contain quivers but his linear data and relations correspond to the quiver $\mathbf Q^1$ of the present paper.} The quiver ${\mathbf Q}^n$ will be called the {\em $n$-th Hirzebruch quiver}.

 \begin{equation}
 \xymatrix@R-2.3em{
&\mbox{\scriptsize$0$}&&\mbox{\scriptsize$1$}\\
&\bullet\ar@/_1ex/[ldd]_{j}\ar@/^/[rr]^{a_{1}}\ar@/^4ex/[rr]^{a_{2}}&&\bullet\ar@/^10pt/[ll]^{b_{1}} \\ \\
\bullet&&&\\
\mbox{\scriptsize$\infty$}&&&\\ &&\mbox{\framebox[1cm]{\begin{minipage}{1cm}\centering $n=1$\end{minipage}}}
}
\xymatrix@R-2.5em{
}\qquad\qquad
  \xymatrix@R-2.3em{
&&\mbox{\scriptsize$0$}&&\mbox{\scriptsize$1$}\\
&&\bullet\ar@/_3ex/[llddddddd]_{j}\ar@/^/[rr]^{a_{1}}\ar@/^4ex/[rr]^{a_{2}}&&\bullet\ar@/^/[ll]^{c_{1}}
\ar@/^4ex/[ll]^{c_2}\ar@/^7ex/[ll]^{c_3}\ar@{..}@/^10ex/[ll]\ar@{..}@/^11ex/[ll]
\ar@/^12ex/[ll]^{c_n}& \\ \\ \\ \\ \\&&&&&\mbox{\framebox[1cm]{\begin{minipage}{1cm}\centering $n\geq2$\end{minipage}}} \\ \\
\bullet\ar@/_/[rruuuuuuu]^{i_1}\ar@/_3ex/[rruuuuuuu]_{i_2}\ar@{..}@/_6ex/[rruuuuuuu]\ar@{..}@/_8ex/[rruuuuuuu]\ar@/_10ex/[rruuuuuuu]_{i_{n-1}}&&&&&\\
\mbox{\scriptsize$\infty$}&&&&
}
\xymatrix@R-2.5em{
} \label{Hirzquiv}
\end{equation}

Denote by $\mathcal P_n$ the path algebra of the quiver ${\mathbf Q}^n$, and let $J_n$ be the ideal in $\mathcal P_n$ generated by the relations
$$ a_1c_1a_2=a_2c_1a_1 \qquad\text{when}\ n=1; $$
\begin{equation}\label{relQn} a_1c_q =a_2c_{q+1}, \qquad c_qa_1-c_{q+1}a_2 =i_qj,\ \ q=1,\dots,n-1 \qquad\text{when}\ n>1.\end{equation}
 Let $\mathcal B_n=\mathcal P_n/J_n$, fix the dimension vector $\mathbf v=(c,c,1)$ (vertexes ordered as $(0,1,\infty)$), and consider in the space of stability parameters the open cone
 \begin{equation}\label{eq:gamma_c}
  \Gamma_c=\left\{\
\Theta=(\theta_0,\theta_1)\in\mathbb{R}^2\mid\theta_0>0\,,-\theta_0<\theta_1<-\frac{c-1}{c}\theta_0
  \right\}\,.\footnote{$\theta_0$ and $\theta_1$ are the components corresponding to the vertexes $0$ and $1$, the third is fixed by the normalization
 $\sum_iv_i\cdot\theta_i=0$.}
 \end{equation}
 The cone $\Gamma_c$ is in fact a chamber (see Remark 3.3 in \cite{bblr-irr}), so that it makes sense to consider the space of framed representations of the algebra $\mathcal B_n$ with dimension vector $\mathbf v$, stable with respect to any stability parameter $\Theta$ inside $\Gamma_c$. We denote this space by $\operatorname{Rep} (\mathcal B_n,\mathbf v)_{\Gamma_c}^{\fr s}$. 
 The main theorem in \cite{bblr} and Theorem 3.8 in \cite{bblr-irr} yield
\begin{thm}  For every $n\ge 1$ and $c\ge 1$ the Hilbert scheme $\operatorname{Hilb}^c(\Xi_n)$ is isomorphic to the   GIT quotient
$$ \operatorname{Rep}(\mathcal B_n,\mathbf v)_{\Gamma_c}^{\fr s}\, /\!\!/\,\GL_c(\C) \times \GL_c(\C).$$
\label{mainbblr} \end{thm}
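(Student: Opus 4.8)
The plan is to reduce Theorem \ref{mainbblr} to the results already obtained in \cite{bbr,bblr,bblr-irr} via the isomorphism $\mathcal M^n(1,0,c)\simeq\operatorname{Hilb}^c(\Xi_n)$, and to carry this out by explicitly comparing the monadic description of framed sheaves on $\Sigma_n$ with the linear data furnished by representations of $\mathbf Q^n$. First I would recall from \cite{bbr} that every framed sheaf $\cE$ on $\Sigma_n$ with invariants $(r,a,c)$ is the middle cohomology of a monad whose terms are direct sums of the line bundles $\opq{p}{q}$ on $\Sigma_n$, with morphisms depending polynomially on a fixed set of linear maps between the multiplicity spaces; this is the content of the universal monad constructed there. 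Then I would identify, in the case $r=1$, $a=0$, these multiplicity spaces with the vector spaces $V_0,V_1,W$ attached to the vertexes $0,1,\infty$ of $\mathbf Q^n$ (dimension vector $\mathbf v=(c,c,1)$), and the entries of the monad morphisms with the arrows $a_1,a_2,c_q$ (resp.\ $b_1$ for $n=1$), $i_q$, $j$. A direct computation — carrying the two homogeneous coordinates on the $\P^1$ base and the fiber coordinate through the monad maps — shows that the monad condition (composition of the two maps vanishes) is equivalent precisely to the relations generating $J_n$, namely $a_1c_1a_2=a_2c_1a_1$ for $n=1$ and \eqref{relQn} for $n\ge 2$.

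Next I would match the stability conditions. On the sheaf side, $\mathcal M^n(1,0,c)$ is cut out inside the space of linear data by the open conditions that make the monad maps fiberwise injective/surjective (so that the cohomology is a genuine torsion-free sheaf with the prescribed framing), and the moduli space is the GIT (equivalently symplectic) quotient by the product of general linear groups $\GL(V_0)\times\GL(V_1)=\GL_c(\C)\times\GL_c(\C)$ acting by change of basis on the multiplicity spaces. On the quiver side, $\operatorname{Rep}(\mathcal B_n,\mathbf v)^{\fr s}_{\Gamma_c}$ is the locus of framed representations that are $\Theta$-stable for $\Theta\in\Gamma_c$, and the moduli space is again a GIT quotient by the same group. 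The key point, already proved in \cite{bblr-irr} (Remark 3.3 and Theorem 3.8), is that for $\Theta$ in the chamber $\Gamma_c$ the $\Theta$-stability of a framed representation of $\mathcal B_n$ is equivalent to the nondegeneracy (monad) conditions on the corresponding linear data; I would cite this equivalence rather than reprove it. Hence the two open subsets of the space of linear data coincide, the two groups coincide, the two quotients coincide, and the isomorphism $\operatorname{Hilb}^c(\Xi_n)\simeq\mathcal M^n(1,0,c)$ from \cite{bbr} transports to the asserted isomorphism with the quiver variety.

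Finally I would dispose of the small-$n$ bookkeeping: for $n=1$ the quiver has a single backward arrow $b_1$ and no arrows $i_q$ at all (since $n-1=0$), so the monad from \cite{bbr} degenerates accordingly and the relation $a_1c_1a_2=a_2c_1a_1$ is the only one; this case also matches King's linear data \cite{Ki}, which gives an independent check. For $n\ge 2$ one uses the full set of arrows $c_1,\dots,c_n$ and $i_1,\dots,i_{n-1}$ and the relations \eqref{relQn}. In both cases the normalization $\sum_i v_i\theta_i=0$ fixes $\theta_\infty$ in terms of $(\theta_0,\theta_1)$, consistently with the description of $\Gamma_c$ in \eqref{eq:gamma_c}.

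The main obstacle I expect is the careful matching of stability conditions: showing that the GIT-stable locus for $\Theta\in\Gamma_c$ on the quiver side agrees exactly with the open set on which the monad maps are nondegenerate (fiberwise injective on the left, surjective on the right), including a dimension/weight count verifying that $\Gamma_c$ is the right chamber and that no strictly semistable points intrude. This is precisely the technical heart of \cite{bblr,bblr-irr}, so in the present paper it can be invoked; the remaining verification — that the monad relations of \cite{bbr} are term-by-term the ideal relations $J_n$ — is a routine but slightly lengthy coordinate computation, and the two degenerate shapes of $\mathbf Q^n$ for $n=1$ versus $n\ge 2$ require keeping the index ranges straight.
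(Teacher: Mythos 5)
Your proposal is correct and matches the paper's treatment: the paper gives no proof of this statement at all, simply attributing it to the main theorem of \cite{bblr} and Theorem 3.8 of \cite{bblr-irr}, and your outline (monadic description from \cite{bbr}, identification of linear data with $\mathbf Q^n$-representations and of the monad condition with the relations $J_n$, matching of the nondegeneracy conditions with $\Gamma_c$-stability) is a faithful summary of how those cited works establish it. The only minor caveat is the parenthetical ``equivalently symplectic'' quotient, which is not claimed anywhere in the paper and is not needed.
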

 We shall see later on that this scheme represents the functor of families  of framed stable representations of the quiver ${\mathbf Q}^n$.
 To that end we introduce:
 \begin{itemize} \itemsep=-2pt 
 \item the category $\A _n$ of families of  representations of the quiver ${\mathbf Q}^n$ with the relations \eqref{relQn}. For $n\ge 2$, an object of $\A _n$ is a collection 
 $$(T,\V_0,\V_1,\W,A_1,A_2,C_1,\dots,C_n,I_1,\dots,I_{n-1},J)$$
 where
 \begin{itemize} \itemsep=-2pt  \item $T$ is a scheme;  \item $\W,\V_0,\V_1$ are vector bundles on $T$;
 \item $A_1,A_2\in \Hom(\V_0,\V_1)$, $C_1,\dots,C_n\in\Hom(\V_1,\V_0)$, $I_1,\dots,I_{n-1}\in\Hom(\W,\V_0)$, \\
 $J\in\Hom(\V_0,\W)$  satisfying the conditions
$$A_1C_q=A_2C_{q+1}, \quad C_qA_1-C_{q+1}A_2 = I_q J, \quad q=1,\dots,n-1.$$
 \end{itemize}
 For $n=1$ the objects are collections $(T,\V_0,\V_1,\W,A_1,A_2,C_1,J)$ with $A_1C_1A_2=A_2C_1A_1$. 
 \item For a fixed $\mathbf v = (c_0,c_1,r)$, $\A _n(\mathbf v)$ is the full subcategory of $\A _n$ of families of representations of
 ${\mathbf Q}^n$ with dimension vector $\mathbf v$, i.e., $\rk \V_0=c_0$, $\rk \V_1 = c_1$, and  $\rk \W=r$.
 \item For a fixed stability parameter $\Theta$, $\A _n(\mathbf v)_\Theta^s$ is the full subcategory of $\A _n(\mathbf v)$ whose objects are framed representations that are stable with respect to $\Theta$.
 \item The category $\Kom_n$ of  families of complexes of cohererent sheaves on the variety $\Sigma_n$.
 \item Its full subcategory $\Kom_n^{\mbox{\tiny flat}}$ whose objects are families of complexes of cohererent sheaves on  $\Sigma_n$
 whose cohomology sheaves are flat on the base scheme.
 \end{itemize} 
 
 Morphisms in these categories are defined as in the previous Section in the case of $\P^2$.
 
 The next step would be to define a functor $\A _n\to \Kom_n$. However we are unable to do that in full generality, and we need to restrict to representations  satisfying a  kind of nondegeneracy condition, corresponding to the regularity of the pencil $\nu_1A_1 + \nu_2A_2$, where $\bnu=[\nu_1,\nu_2]\in \P^1$
 (see condition (P2) in \cite{bblr}, p.~2137).
We consider   a   full subcategory  $\A_{n,\boldsymbol \nu}$    characterized by the condition that the homomorphism
 $$ A_{\bnu} =  \nu_2A_1 + \nu_1A_2$$
is an  isomorphism. Of course this fixes the second and third components of the dimension vector to be equal.

We want to define a functor $$\K_{n,\boldsymbol \nu} \colon \A_{n,\boldsymbol \nu}\to \Kom_n$$ of categories over $ \mbox{\bf Sch}$.

We  recall that we may represent the $n$-th Hirzebruch surface $\Sigma_n$ as 
\begin{equation}
\Sigma_n = \left\{([y_1,y_2],[x_1,x_2,x_3])\in\mathbb{P}^1\times\mathbb{P}^2\;|\;x_1y_1^n=x_2y_2^n \right\}\,,
\end{equation}
and for every $\bnu=[\nu_1,\nu_2]\in\mathbb{P}^1$ we introduce the additional pair of coordinates
\begin{equation*}
[y_{1,\bnu},y_{2,\bnu}]=[\nu_1y_{1}+\nu_2y_{2},-\nu_2y_{1}+\nu_1y_{2}]\,.
\label{beta10}
\end{equation*}
The set $\left\{y_{2,\bnu}^{q}y_{1,\bnu}^{h-q}\right\}_{q=0}^{h}$ is a basis for $H^{0}\left(\mathcal{O}_{\Sigma_n}(0,h)\right)=H^{0}\left(\pi^{*}\mathcal{O}_{\mathbb{P}^1}(h)\right)$ for all $h\geq1$, where $\pi\colon\Sigma_{n}\longrightarrow\mathbb{P}^1$ is the canonical projection. Furthermore  the (unique up to homotheties) global section  $s_{\mathfrak e}$  of $\mathcal{O}_{\Sigma_n}(\mathfrak e)$ induces an injection $\mathcal{O}_{\Sigma_n}(0,n)\rightarrowtail\mathcal{O}_{\Sigma_n}(1,0)$, so that the set \[\left\{(y_{2,\bnu}^{q}y_{1,\bnu}^{n-q})s_{\mathfrak e}\right\}_{q=0}^{n}\cup\{s_{\infty}\}\] is a basis for $H^{0}\left(\mathcal{O}_{\Sigma_n}(1,0)\right)$, where $s_{\infty}$ is a section whose vanishing locus is   $\ell_\infty$.

We define the functor $\K_{n,\bnu}$ on objects.
If $X=(T,\V_0,\V_1,\W,A_1,A_2,C_1,\dots,C_n,I_1,\dots,I_{n-1},J)$ is an object in
 $\A_{n,\boldsymbol \nu}$,
 then $\K_{n,\bnu}(X)$ is the complex\footnote{The dual vector spaces and morphisms appear in the next formulas as the linear data chosen in \cite{bblr} locally reduce to the transposes of Nakajima's linear data. This also has other consequences: our stability corresponds to Nakajima's {\em co-stability,} and, contrary to the case of $\P^2$ in the first part of this paper, the functor  $\K_{n,\bnu}$ is contravariant.}
\begin{equation}\label{monadeventually} 0 \to \V_0^\ast \boxtimes\opq{0}{-1} \xrightarrow{\ \alpha_{\bnu}\ }   \V_0^\ast\boxtimes \opq{1}{-1} \oplus  (\V_0^\ast\oplus \W^\ast) \boxtimes \cO_{\Sigma_n}
 \xrightarrow{\ \beta_{\bnu}\ } \V_0^\ast \boxtimes\opq{1}{0} \to 0 
\end{equation}
 with the morphisms $\alpha_{\bnu}$, $\beta_{\bnu}$ given by
$$\alpha_{\bnu}=
\begin{pmatrix}
\id  \otimes (y_{2,\bnu}^{n}s_{\mathfrak e})+ {A_{\bnu}^\ast}{C^\ast_{\bnu}}\otimes s_\infty\\[5pt]
\id  \otimes y_{1,\bnu}+ {D_\bnu^\ast }  (A_{\bnu}^\ast )^{-1}  \otimes y_{2,\bnu}\\
- I_\bnu^\ast \otimes y_{2\bnu}
\end{pmatrix}\,,$$
$$\beta_{\bnu}=
\begin{pmatrix}
\id  \otimes y_{1,\bnu}+ {D_\bnu^\ast }  (A_{\bnu}^\ast )^{-1}  \otimes y_{2,\bnu},&
-\left(\id  \otimes (y_{2,\bnu}^{n}s_{\mathfrak e})+ {A_{\bnu}^\ast}{C^\ast_{\bnu}}\otimes s_\infty\right),&
J^\ast\otimes\,s_\infty
\end{pmatrix}, 
$$
where we have set
$$
C_{\bnu}={\sum_{q=1}^{n}} {\textstyle\binom{n-1}{q-1}}\,\nu_1^{n-q}\nu_2^{q-1}C_q, \quad   D_\bnu = \nu_1A_1-\nu_2A_2, \quad
I_\bnu=   (\nu_1^2+\nu_2^2) \sum_{q=1}^{n-1} {\textstyle\binom{n-2}{q-1}}\, \nu_1^{n-q-1} \nu_2^{q-1} I_q
$$
(for $n=1$ we understand that $C_\bnu=1$ and $I_\bnu=0$).

The action of $\K_{n,\bnu}$  on morphisms is defined as in the case of $\P^2$, see \eqref{morP2}. We omit the cumbersome but trivial details.

 We see now some properties of the functor $\K_{n,\bnu}$. Let $\mathcal H^\bullet$ denote the cohomology sheaves of a complex on $T\times\Sigma_n$.

 \begin{defin}
     Let $\X \in \A_{n,\bnu}$ be a family of framed representations. If the dimension vector of $\X$ is $(1,c,c)$ for some $c$, and
  $\X$ is stable with respect to any stability parameter in the chamber $\Gamma_c$, then we say that $\X$ is $\Gamma_c$-stable. The same wording will be used for the absolute case.
 \end{defin}

  \begin{prop} If $\X \in \A_{n,\bnu}$ is a family of framed representations, then $\mathcal H^{-1}(\K_{n,\bnu}(\X))=0$,
  and $\mathcal H^{0}(\K_{n,\bnu}(\X))$ is torsion-free. If $\X$ is $\Gamma_c$-stable, then  $\mathcal H^{1}(\K_{n,\bnu}(\X))=0$.
  \end{prop}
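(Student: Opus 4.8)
The plan is to analyze the complex $\K_{n,\bnu}(\X)$ degree by degree, exploiting the explicit form of the matrices $\alpha_\bnu$ and $\beta_\bnu$, and reducing the torsion-freeness and vanishing statements to fiberwise rank computations on $T\times\Sigma_n$, especially along $T\times\ell_\infty$ (the locus $s_\infty=0$). Since flatness and the relevant cohomology properties can be checked fiberwise, I would first reduce to the absolute case $T=\operatorname{Spec}\C$, and recover the general case afterwards via base change. The key structural observation is that over the complement of $T\times\ell_\infty$ the section $s_\infty$ is invertible, so one can perform elementary column/row operations on $\alpha_\bnu,\beta_\bnu$ to bring the complex to a simpler (essentially Koszul-type) shape in the fiber coordinates $y_{1,\bnu},y_{2,\bnu}$; this is where the regularity of the pencil, i.e.\ the invertibility of $A_\bnu$ built into $\A_{n,\bnu}$, is used to guarantee injectivity of $\alpha_\bnu$.

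\textbf{Step 1: $\mathcal H^{-1}=0$.} I would show $\alpha_\bnu$ is injective, in fact fiberwise injective, hence injective as a map of sheaves. Looking at $\alpha_\bnu$, its first and third block rows already contain the terms $\id\otimes(y_{2,\bnu}^n s_{\mathfrak e})+\dots$ and $-I_\bnu^\ast\otimes y_{2,\bnu}$; the decisive entry is the middle block $\id\otimes y_{1,\bnu}+D_\bnu^\ast(A_\bnu^\ast)^{-1}\otimes y_{2,\bnu}$, which is the pencil $\nu\mapsto$ (a regular pencil after twisting by $(A_\bnu^\ast)^{-1}$) and so has maximal rank at every point of $\P^1$. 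A pointwise argument: at a point of $T\times\Sigma_n$ a local section of $\V_0^\ast\boxtimes\opq{0}{-1}$ in $\ker\alpha_\bnu$ must vanish under this middle pencil entry, and since the pencil is regular it has no common kernel, forcing the section to vanish. This gives fiberwise injectivity, hence $\mathcal H^{-1}(\K_{n,\bnu}(\X))=0$; this part is essentially identical in spirit to the $\P^2$ case and to the monad construction of \cite{bblr}.

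\textbf{Step 2: $\mathcal H^0$ torsion-free.} Let $E=\mathcal H^0(\K_{n,\bnu}(\X))=\ker\beta_\bnu/\operatorname{im}\alpha_\bnu$. Since $\alpha_\bnu$ is injective and the ambient sheaves are locally free, $\ker\beta_\bnu$ is a subsheaf of a locally free sheaf, hence torsion-free; its quotient by $\operatorname{im}\alpha_\bnu$ need not be. The standard trick (as in \cite{bblr}, and mirroring the $\P^2$ argument in Section \ref{projplane}) is to show that any torsion subsheaf of $E$ would have to be supported on a divisor, then restrict the complex to a general fiber of $\pi\colon\Sigma_n\to\P^1$, i.e.\ to $\{\bnu'\}\times\P^1$-type curves, where the complex becomes an explicit complex of sheaves on $\P^1$ whose middle cohomology is torsion-free because there the pencil argument localizes to a genuine Euler-sequence-type resolution. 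Concretely, I expect one writes $E$ as (twist of) the kernel of a surjection between locally free sheaves restricted to each fiber, and torsion-freeness follows because on each fiber the complex has no higher cohomology and the cohomology sheaf embeds into a locally free one. Some care is needed with the special fiber structure over $\ell_\infty$; this is precisely the computation the footnote defers to Lemma \ref{cohomdeg}, so I would invoke (or replicate) that fiberwise computation showing $\operatorname{im}\alpha_\bnu=\ker\beta_\bnu$ on $T\times\ell_\infty$, which simultaneously shows $E$ is locally free near $\ell_\infty$ (in particular its framing makes sense) and contributes to ruling out torsion there.

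\textbf{Step 3: $\mathcal H^1=0$ under $\Gamma_c$-stability.} Here I would show $\beta_\bnu$ is surjective when $\X$ is $\Gamma_c$-stable. By duality/co-stability (as the footnote explains, our stability is Nakajima's co-stability, and the $\K_{n,\bnu}$ is contravariant), the $\Gamma_c$-stability condition translates into precisely the statement that the dual of the relevant map has no cokernel, i.e.\ that $\beta_\bnu$ is fiberwise surjective; equivalently the linear data satisfy condition (P2)/the co-stability in \cite{bblr}. I would cite the explicit correspondence between $\Gamma_c$-stable representations and monads established in \cite{bblr,bblr-irr} (Theorem \ref{mainbblr} and its proof), which already contains the statement that for such representations the three-term complex $\K_{n,\bnu}(\X)$ is a monad, i.e.\ $\alpha_\bnu$ injective, $\beta_\bnu$ surjective, so $\mathcal H^1=0$. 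Fiberwise surjectivity plus properness then gives $\mathcal H^1=0$ as a sheaf on $T\times\Sigma_n$.

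\textbf{Main obstacle.} The routine parts ($\mathcal H^{-1}=0$ via the regular pencil, and $\mathcal H^1=0$ via stability, both reducible to \cite{bblr}) I expect to be straightforward. The genuine work is Step 2: proving $\mathcal H^0$ is torsion-free \emph{without} assuming stability, using only the nondegeneracy built into $\A_{n,\bnu}$. The difficulty is that $E$ is a middle cohomology of a complex that is \emph{not} assumed to be a monad, so one cannot simply quote the monad theory; one must argue that the failure of $\operatorname{im}\alpha_\bnu=\ker\beta_\bnu$ is confined to a torsion-free quotient, by a careful restriction to fibers of $\pi$ and to $\ell_\infty$, and by checking that the local Euler-sequence structure of $\K_{n,\bnu}(\X)$ on each fiber forces any would-be torsion to have empty support. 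I would handle this by the same restriction-to-a-curve technique used for $\P^2$, adapted to the two rulings of $\Sigma_n$, and I anticipate the $\ell_\infty$ computation (Lemma \ref{cohomdeg}) is the linchpin.
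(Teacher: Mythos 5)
Your proposal follows essentially the same route as the paper, whose entire proof is the observation that all three statements reduce to the absolute case $T=\operatorname{Spec}\C$, which is proved in Section A.1 of \cite{bblr}; your Steps 1--3 are an expanded sketch of exactly that fiberwise reduction plus the monad analysis of \cite{bblr}. One caveat: in Step 1 you claim $\alpha_\bnu$ is fiberwise injective at every point, which is stronger than what holds (or is needed) for an arbitrary, not necessarily stable, family --- injectivity as a morphism of sheaves, as established in \cite{bblr} (statement (i), p.~2151), is what gives $\mathcal H^{-1}=0$.
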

 \begin{proof} This follows from the  case $T = \operatorname{Spec} \C$,   which was  proved in  Section A.1 of  \cite{bblr}.
  \end{proof}

  Let $\A_{n,\bnu}(c)^{\fr s}$ be the subcategory of $\A_{n,\bnu}$ whose
 objects are families of $\Gamma_c$-stable framed representations.

\begin{prop} \label{Prop14} \begin{enumerate} \itemsep=-2pt  
\item $\K_{n,\bnu}$ maps $\A _{n,\bnu}(c)^{\fr s}$ into $\Kom_n^{\mbox{\tiny\rm  flat}}$.
\item If $\X \in \A_{n,\bnu}(c)^{\fr s}\cap\A_{n,\bnu'}(c)^{\fr s}$ then the complexes $\K_{n,\bnu}(\X)$ and 
 $\K_{n,\bnu'}(\X)$ are quasi-iso\-morphic.
 \end{enumerate}
 \label{propK1}
 
  Let $\X\to\X''$ be a surjective morphism in $\A_{n,\bnu}^\fr$ for a scheme $T$,  
  where $\X$ is $\Gamma_c$-stable and $\X''$ has dimension vector $(0,c-c',c-c')$.
  Let $\X'$ be the
 corresponding kernel. Then: 
 \begin{enumerate}\itemsep=-2pt  \setcounter{enumi}{2}
 \item $\X'\in \A_{n,\bnu}(c')^{\heartsuit s}$ is $\Gamma_{c'}$-stable.
 \item The sequence of morphisms of complexes of coherent sheaves on $T\times\Sigma_n$
  \begin{equation} \label{seqcompl}  0 \to \K_{n,\bnu} (\X'') \to \K_{n,\bnu} (\X)
 \to \K_{n,\bnu} (\X') \to 0\end{equation} is exact.
 \end{enumerate}
 \label{propK2}\end{prop}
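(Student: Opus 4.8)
The plan is to follow, on $\Sigma_n$, the pattern of Section~\ref{projplane}: parts (1)--(2) are the Hirzebruch analogues of Proposition~\ref{propflat} and of the $\bnu$-independence of the monadic description of \cite{bbr,bblr} for a single $\Gamma_c$-stable family, while parts (3)--(4) reproduce --- for the contravariant functor $\K_{n,\bnu}$ --- the device used after \eqref{morP2} on $\P^2$ of turning a short exact sequence of families of quiver representations into a short exact sequence of the associated complexes. I use the preceding Proposition (which gives $\mathcal H^{-1}=\mathcal H^{1}=0$ for $\Gamma_c$-stable families) freely, and I read ``surjective morphism of families over the fixed base $T$'' as fiberwise surjective, so that the relevant kernel sheaves stay locally free.

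For part (1), since $\X\in\A_{n,\bnu}(c)^{\fr s}$ is $\Gamma_c$-stable, $\alpha_\bnu$ is injective and $\beta_\bnu$ is surjective as sheaf maps on $T\times\Sigma_n$, so only $\mathcal H^{0}$ of \eqref{monadeventually} must be shown $T$-flat; I would copy the proof of Proposition~\ref{propflat} verbatim: $\ker\beta_\bnu$ is $T$-flat as the kernel of a surjection of $T$-flat sheaves, $\operatorname{im}\alpha_\bnu\cong\V_0^\ast\boxtimes\opq{0}{-1}$ is $T$-flat as it is locally free, and Lemma~2.1.4 of \cite{HL-book} applied to $0\to\operatorname{im}\alpha_\bnu\to\ker\beta_\bnu\to\mathcal H^{0}\to 0$ finishes it, so $\K_{n,\bnu}(\X)\in\Kom_n^{\mbox{\tiny flat}}$. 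For part (2): by (1) both $\K_{n,\bnu}(\X)$ and $\K_{n,\bnu'}(\X)$ are families of monads, i.e.\ three-term complexes of locally free sheaves with $T$-flat cohomology concentrated in degree $0$, and any such complex $A^{\bullet}$ is joined to $\mathcal H^{0}(A^\bullet)$ in degree $0$ by the standard zig-zag of quasi-isomorphisms $A^{\bullet}\longleftarrow B^{\bullet}\longrightarrow\mathcal H^{0}(A^\bullet)$ with $B^{\bullet}$ the subcomplex $[\,A^{-1}\to\ker(A^{0}\to A^{1})\,]$; so it is enough to identify the two degree-$0$ cohomologies. Fiberwise this is the $\bnu$-independence of monad cohomology of \cite{bbr,bblr}, and since the monads are assembled from the linear data by base-change-compatible formulas the identification globalizes to an isomorphism $\mathcal H^{0}(\K_{n,\bnu}(\X))\cong\mathcal H^{0}(\K_{n,\bnu'}(\X))$ on $T\times\Sigma_n$; alternatively one transcribes the explicit change-of-$\bnu$ chain map of \cite{bblr} to families.

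For parts (3)--(4): as $\X\to\X''$ is fiberwise surjective and $\V_i'',\W''$ are locally free, the sheaves $\V_i'=\ker(\V_i\to\V_i'')$ and $\W'=\W$ are subbundles, the relations \eqref{relQn} for $\X'$ are the restrictions of those of $\X$, $\X'$ is framed with framing space $W$, and its dimension vector is $(1,c',c')$ since $\rk\V_i''=c-c'$; commutativity of $\X\to\X''$ makes $\V_0\xrightarrow{A_\bnu}\V_1\to\V_1''$ factor through $\V_0\to\V_0''$, so $A_\bnu$ maps $\V_0'$ into $\V_1'$, and being injective between bundles of equal rank $c'$ it restricts to an isomorphism $A'_\bnu\colon\V_0'\to\V_1'$, whence $\X'\in\A_{n,\bnu}$. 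For the $\Gamma_{c'}$-stability I would pass to closed points --- a subrepresentation of $\X'_t$ is a subrepresentation of $\X_t$, and the numerical inequality coming from $\Gamma_c$-stability of $\X_t$ transforms into the $\Gamma_{c'}$-inequality for $\X'_t$ after accounting for the normalization of the stability parameter attached to the dimension vectors $(1,c,c)$ and $(1,c',c')$ in \eqref{eq:gamma_c} --- exactly the chamber computation carried out in \cite{bblr-irr}. Finally, \eqref{seqcompl} is term-by-term exact because $0\to\V_i'\to\V_i\to\V_i''\to 0$ is a (locally split) short exact sequence of vector bundles, hence stays exact after dualizing and after tensoring by the locally free twisting sheaves of \eqref{monadeventually}, and the maps are morphisms of complexes by the definition of $\K_{n,\bnu}$ on morphisms (modelled on \eqref{morP2}).

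I expect the main obstacle to be the stability statement in part (3): showing that the sub-family $\X'$ inherits $\Gamma_{c'}$-stability from the $\Gamma_c$-stability of $\X$ is the Hirzebruch counterpart of the fact (taken on $\P^2$ from \cite{JvF}) that the relevant sub/quotient of a stable ADHM representation is again stable, and it is the step where the precise shape of the chambers $\Gamma_c$ in \eqref{eq:gamma_c} genuinely enters. A secondary difficulty is the passage in part (2) from the fiberwise $\bnu$-independence to a bona fide isomorphism of the two families of monad cohomologies over $T\times\Sigma_n$.
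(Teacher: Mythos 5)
Your proposal is correct and follows essentially the same route as the paper, which disposes of (1) by repeating Proposition \ref{propflat}, of (2) by appeal to the $\bnu$-independence established in \cite{bblr}, of (3) by the direct computation you describe (spelled out later in the paper as Lemma \ref{rmkcarac}), and of (4) by term-by-term exactness of the dualized sequences of locally free sheaves. The only simplification worth noting for (3) is that no renormalization between the dimension vectors $(1,c,c)$ and $(1,c',c')$ is actually needed: the characterization of $\Gamma_c$-stability in Lemma \ref{caractbblr} is independent of $c$ (it only asks $s_0<s_1$, resp.\ $s_0\le s_1$, for the two classes of subrepresentations), so the inequalities for subrepresentations of $\X'_t$, viewed as subrepresentations of $\X_t$, transfer verbatim.
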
  
 \begin{proof} 
1. This goes exactly as in Proposition \ref{propflat}. 

2. This is essentially proved in \cite{bblr}, albeit in a different language.

 3. It follows from a direct computation. 
 
 4. The sequence \eqref{seqcompl} can be written as a diagram with three rows and three columns;
 the second and third column are complexes as in \eqref{monadeventually}, and the first column too, but with $\W=0$.
 The exactness of the rows is equivalent to the exactness of the sequence $ 0 \to \X' \to \X \to \X'' \to 0 $.
 \end{proof}
 
 We conclude this section by stating and briefly  discussing the correspondence between the functor of families of representations of the quivers ${\mathbf Q}^n$, and the Hilbert scheme functor for the varieties $\Xi_n$; that is, we categorize Theorem \ref{mainbblr}.
 
 \begin{thm} \label{isofunctHirz} Let $\mathfrak R^{n\fr  s}_{c,\Theta}$ be the functor of families of $\Gamma_c$-stable framed representations of the quiver with relations ${\mathbf Q}^n$. Let $\mathfrak{Hilb}^{c}_{\Xi_n}$ be the functor     of families of  length $c$ 0-cycles  on the variety $\Xi_n$. There is a natural transformation $\eta_n\colon \mathfrak R^{n\fr  s}_{c,\Theta} \to \mathfrak{Hilb}^{c}_{\Xi_n}$ which is an isomorphism of functors. 
 \end{thm}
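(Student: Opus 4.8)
The plan is to mimic the strategy already used for $\P^2$ in Section \ref{projplane}, now using the functor $\K_{n,\bnu}$ of Section \ref{framHirz} in place of $\K_{\mathbf Q}$, and exploiting Propositions \ref{propK1}--\ref{propK2}. First I would construct the natural transformation $\eta_n$. Given a family $\X$ of $\Gamma_c$-stable framed representations of ${\mathbf Q}^n$ parameterized by a scheme $T$, choose (locally on $T$, or after noting that stability forces $A_{\bnu}$ to be an isomorphism for generic $\bnu$) a $\bnu\in\P^1$ so that $\X\in\A_{n,\bnu}(c)^{\fr s}$; apply $\K_{n,\bnu}$ to obtain a complex on $T\times\Sigma_n$ whose only nonzero cohomology, by the Proposition preceding $\A_{n,\bnu}(c)^{\fr s}$, is $\mathcal H^0$, a torsion-free sheaf, flat over $T$ by Proposition \ref{propK1}(1). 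Restricting to $T\times\ell_\infty$ and using the explicit form of $\alpha_{\bnu},\beta_{\bnu}$ (the $s_\infty$-terms) one reads off a framing $\phi\colon\mathcal H^0{}_{\vert T\times\ell_\infty}\xrightarrow{\sim}\cO_{\ell_\infty\times T}$, so that $(\mathcal H^0,\phi)$ is a flat family of framed sheaves on $\Sigma_n$ with the numerical invariants $(1,0,c)$; by Theorem \ref{mainbblr} and the construction of \cite{bbr,bblr}, this is the same as a family of length $c$ 0-cycles on $\Xi_n$. Independence of the chosen $\bnu$ follows from Proposition \ref{propK1}(2) (quasi-isomorphic complexes have the same $\mathcal H^0$), so $\eta_n$ is well-defined; compatibility with pullbacks is the same diagram chase as on $\P^2$, since $\K_{n,\bnu}$ commutes with pullback.

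Next I would construct the inverse natural transformation, tracing the steps backwards exactly as in the $\P^2$ case. Given a flat family of length $c$ 0-cycles on $\Xi_n$, i.e.\ (via \cite{bbr}) a flat family of framed torsion-free sheaves $(F,\phi)$ on $\Sigma_n$ with invariants $(1,0,c)$, the monadic description of \cite{bbr,bblr} expresses $F$, locally on $T$ and for generic $\bnu$, as $\mathcal H^0$ of a complex of the shape \eqref{monadeventually} with $\W=W\otimes\cO_T$; the linear data of that monad assemble into a family $\X$ of $\Gamma_c$-stable framed representations of ${\mathbf Q}^n$, the relations \eqref{relQn} being exactly the condition $\beta_{\bnu}\circ\alpha_{\bnu}=0$ together with the monad conditions. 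One checks this is independent of $\bnu$ and of the local trivialization (again via Proposition \ref{propK1}(2) and the fact that a monad is determined up to isomorphism by its cohomology), globalizes, and verifies functoriality in $T$. Then one shows the two natural transformations are mutually inverse: $\eta_n$ followed by the reverse construction recovers $\X$ because $\K_{n,\bnu}$ is faithful (the analogue of the faithfulness statement used on $\P^2$) and a stable representation is recovered from its associated monad; the other composite recovers $(F,\phi)$ because $\mathcal H^0\circ\K_{n,\bnu}$ is the identity on sheaves admitting such a monadic presentation, which is the content of \cite{bblr}.

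The kernel/quotient bookkeeping used on $\P^2$ (splitting a representation of the enhanced quiver as a morphism of standard ADHM representations) is \emph{not} needed here since ${\mathbf Q}^n$ is a single quiver rather than an enhancement; however, the analogous short-exact-sequence machinery, Proposition \ref{propK2}(3)--(4), will be the technical engine for proving $\Gamma_{c'}$-stability of kernels and the exactness needed when comparing the representation- and sheaf-theoretic sides in families, in particular for handling the case $n=1$ separately (where $C_\bnu=1$, $I_\bnu=0$). The main obstacle, as the authors flag, is the $\bnu$-dependence: $\K_{n,\bnu}$ is defined only on the full subcategory $\A_{n,\bnu}$, and neither a single family of representations nor a single family of sheaves need admit a presentation valid for one fixed $\bnu$ over all of $T$ simultaneously. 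The fix is to work over an open cover of $T$ on which some $\A_{n,\bnu}(c)^{\fr s}$ applies (such a cover exists because $\Gamma_c$-stability makes $A_{\bnu}$ an isomorphism on a dense open subset of $\P^1$ fiberwise, hence on a neighborhood), and then glue using Proposition \ref{propK1}(2); verifying that the gluing data are consistent — i.e.\ that the quasi-isomorphisms between $\K_{n,\bnu}(\X)$ and $\K_{n,\bnu'}(\X)$ are compatible on triple overlaps and induce genuine isomorphisms of the reconstructed representations — is the one place where the Hirzebruch case is genuinely more involved than $\P^2$, and is where I would expect to spend most of the effort.
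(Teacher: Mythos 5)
Your proposal follows essentially the same route as the paper: $\eta_n$ is defined by applying $\K_{n,\bnu}$ and taking $\mathcal H^0$, with flatness from Proposition \ref{propK1}(1) and independence of $\bnu$ from Proposition \ref{propK1}(2), and the inverse direction is obtained from the monadic description of framed sheaves in \cite{bbr,bblr}, i.e.\ the content of Theorem \ref{mainbblr}. The only difference is one of care: the paper simply asserts that a family of $\Gamma_c$-stable representations lies in $\A_{n,\bnu}(c)^{\fr s}$ for a single $\bnu$ and defers the remaining verifications to \cite{bblr}, whereas you correctly observe that in general one must pass to an open cover of $T$ on which some $A_{\bnu}$ is invertible and glue using the quasi-isomorphisms of Proposition \ref{propK1}(2).
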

 \begin{proof} The natural transformation $\eta_n$ is defined by means of the functors $\K_{n,\bnu}$, also in view of part 1 of Proposition \ref{propK1}: if $\X$ is a family of $\Gamma_c$-stable representations of ${\mathbf Q}^n$, it is
 in $\A_{n,\bnu}(c)^{\fr s}$ for some $\bnu$; then $\mathcal H^0(\K_{n,\bnu}(\X))$ is a family of length $c$ 0-cycles on $\Xi_n$.
 That $\eta_n$ is an isomorphism of functors is just the categorical way of stating Theorem \ref{mainbblr}, and ultimately is the main content of \cite{bblr}.
 \end{proof}
 The version of Remark \ref{remNaka} in the present context is that the Hilbert scheme $\mathrm{Hilb}^c(\Xi_n)$ represents the functor 
 $\mathfrak R^{n\fr  s}_{c,\Theta}$.
 
 \bigskip
 \section{Nested Hilbert schemes of $\Xi_n$ as quiver varieties} \label{trippa}
 We come now to the main result of this paper, which is the analogue of Theorem \ref{JvFthm} for rank 1 framed flags on Hirzerbruch surfaces; as we previously discussed, this result establishes an isomorphism between the moduli space of stable framed representations of a suitable quiver with relations, and the Hilbert scheme of nested 0-cycles on the varietes $\Xi_n$, i.e., the varieties obtained by removing from the $n$-th Hirzebruch surfaces $\Sigma_n$ the image of a section of the fibration $\Sigma_n \to \P^1$ squaring to $n$. The quiver is an ``enhancement'' of the Hirzebruch quiver ${\mathbf Q}^n$, which we shall denote $\boldbar {\mathbf Q}^n$, and will call the {\em $n$-th enhanced Hirzebruch quiver}.
 The quiver $\boldbar {\mathbf Q}^n$  is shown in Figure \ref{figenhHirz} and the  relations in Table \ref{relenhHirz}.

  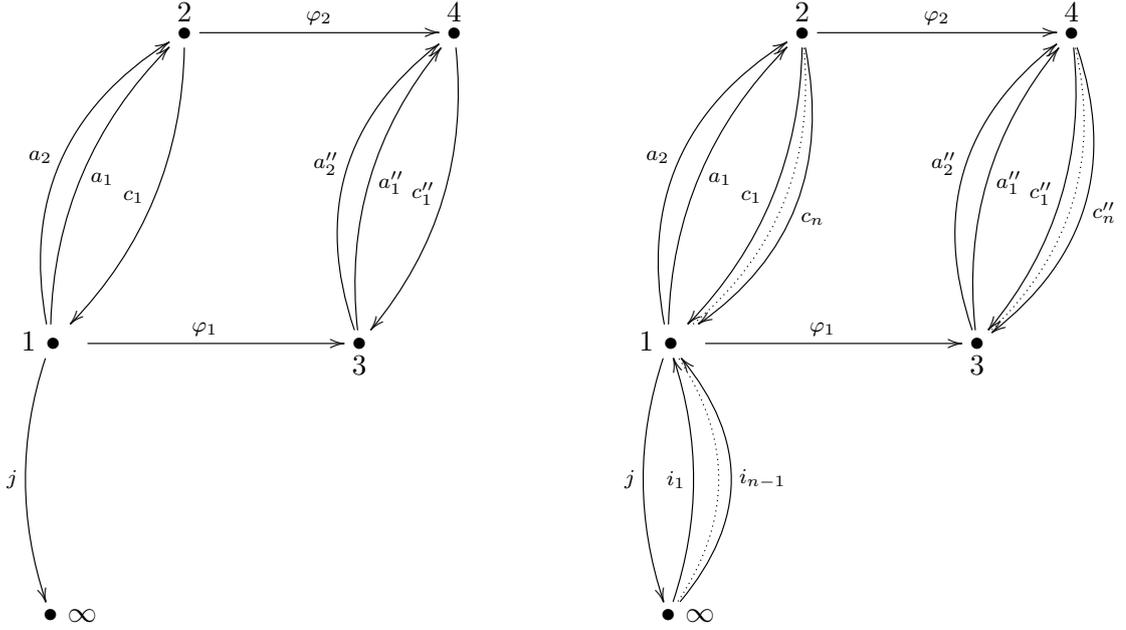
\begin{figure}  
 \begin{center}
\hskip-15mm \parbox{4cm}{ \begin{equation}
	\xymatrix@R-2.3em{
		&  \overset{\displaystyle 2}{\bullet} \ar@/^0ex/[rrr]^{\varphi_{2}}\ar@/^3ex/[lddddddddddddddddddddd]_{c_{1}}&&& \overset{\displaystyle 4}{\bullet} \ar@/^3ex/[lddddddddddddddddddddd]_{c''_{1}} 
		\\ \\ \\ \\ \\ \\ \\ \\ \\ \\ \\ \\ \\ \\ \\ \\ \\ \\ \\ \\ \\
		1 \ \bullet\ \ \ar@/^0ex/[rrr]^{\varphi_{1}}\ar@/^5ex/[ruuuuuuuuuuuuuuuuuuuuu]^{a_{2}}\ar@/^3ex/[ruuuuuuuuuuuuuuuuuuuuu]_{a_{1}}\ar@/_2ex/[ddddddddddddddddddd]_{j}&&&\underset{\displaystyle 3}{\bullet}  \ar@/^5ex/[ruuuuuuuuuuuuuuuuuuuuu]^{a''_{2}}\ar@/^3ex/[ruuuuuuuuuuuuuuuuuuuuu]_{a''_{1}}&\\ \\ \\ \\ \\ \\ \\ \\  \\ \\ \\ \\ \\ \\ \\ \\ \\ \\ \\
	\ \ 	\ \ \bullet \  \infty  &&&& 	}
\end{equation}}
\hskip4cm \parbox{4cm}{ \begin{equation}
\xymatrix@R-2.3em{
& \overset{\displaystyle 2}{\bullet}  \ar@/^0ex/[rrr]^{\varphi_{2}}\ar@/^5ex/[lddddddddddddddddddddd]^{c_{n}}\ar@{..}@/^4ex/[lddddddddddddddddddddd]\ar@/^3ex/[lddddddddddddddddddddd]_{c_{1}}&&&\overset{\displaystyle 4}{\bullet} \ar@/^5ex/[lddddddddddddddddddddd]^{c''_{n}}\ar@{..}@/^4ex/[lddddddddddddddddddddd]\ar@/^3ex/[lddddddddddddddddddddd]_{c''_{1}}
\\ \\ \\ \\ \\ \\ \\ \\ \\ \\ \\ \\ \\ \\ \\ \\ \\ \\ \\ \\ \\
	1 \ \bullet\ \  \ar@/^0ex/[rrr]^{\varphi_{1}}\ar@/^5ex/[ruuuuuuuuuuuuuuuuuuuuu]^{a_{2}}\ar@/^3ex/[ruuuuuuuuuuuuuuuuuuuuu]_{a_{1}}\ar@/_2ex/[ddddddddddddddddddd]_{j}&&&\underset{\displaystyle 3}{\bullet} \ar@/^5ex/[ruuuuuuuuuuuuuuuuuuuuu]^{a''_{2}}\ar@/^3ex/[ruuuuuuuuuuuuuuuuuuuuu]_{a''_{1}}&\\ \\ \\ \\ \\ \\ \\ \\ \\ \\ \\ \\ \\ \\ \\ \\ \\ \\ \\
\ \ 	\ \ \bullet \  \infty  \ar@/_2ex/[uuuuuuuuuuuuuuuuuuu]^{i_{1}}\ar@{..}@/_4ex/[uuuuuuuuuuuuuuuuuuu]\ar@/_5ex/[uuuuuuuuuuuuuuuuuuu]_{i_{n-1}}&&&&  }
\end{equation}}
\caption{\small \label{figenhHirz} The enhanced Hirzebruch quiver for $n=1$ (left) and $n\ge 2$ (right).}
\end{center}
 \end{figure}

\begin{table}  \centerline{\parbox{.9\textwidth}{\hrulefill}} \begin{center}  \begin{eqnarray} 
(n=1) &\quad&  a_1c_1a_2=a_2c_1a_1; \quad a_1''c_1''a_2''=a_2''c_1''a_1''; \quad \varphi_1c_1=c_1''\varphi_2; 
\\  &\quad&   \varphi_2a_1=a_1''\varphi_1;\quad \varphi_2a_2=a_2''\varphi_1 \\[3pt]
  (n\geq 2)  &\quad& 	a_1c_q =a_2c_{q+1}; \quad  c_qa_1+i_qj=c_{q+1}a_2 ; \quad a_1''c_q''=a_2''c_{q+1}; \quad 
  c_q''a_1''=c_{q+1}''a_2'';  \\
  &\quad&  \varphi_1i_q =0; \quad 
\varphi_2a_p =a_p ''\varphi_1; \quad  	\varphi_1c_t =c_t''\varphi_2   \\
 &\quad&  \text{with}\  q=1,\dots,n-1;  \quad  p=1,2; \quad  t = 1,\dots, n.
\end{eqnarray}  \end{center}
\caption{\small \label{relenhHirz} Relations for the enhanced Hirzebruch quiver $\boldbar {\mathbf Q}^n$.}
\centerline{\parbox{.9\textwidth}{\hrulefill}}
\end{table}

The result is expressed by the following theorem. As anticipated, this is a version of Theorem \ref{JvFthm} with the projective plane
replaced by a Hirzebruch surface, and restricted to the rank one case,  and also a generalization of Theorem
\ref{isofunctHirz} to the nested case.

\begin{thm} \label{mainresult} Let $\mathcal M^{n\heartsuit s}_{\mathbf v,\Theta}$ be 
			the moduli space of   framed representations of the quiver $\boldbar {\mathbf Q}^n$ with dimension vector $\mathbf (c, c, c-c', c-c',1)$,\footnote{The vertexes are taken in the order 1, 2, 3, 4, $\infty$.} stable with  respect to a stability parameter $\Theta = (\theta_1,\theta_2,\theta_3,\theta_4)$ such that
\begin{equation} \label{condTheta} (\theta_1,\theta_2)\in\Gamma_c\text{ (cf. eq.~\eqref{eq:gamma_c})},\quad \theta_3,\ \theta_4<0,  \quad 
\theta_1+\theta_2+(\theta_3+\theta_4)(c-c')>0. \end{equation}
		  $\mathcal M^{n\heartsuit s}_{\mathbf v,\Theta}$  is isomorphic to the nested Hilbert scheme $\Hilb^{c',c}(\Xi_n)$.
		\end{thm}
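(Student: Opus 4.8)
The plan is to mimic the proof of Theorem~\ref{JvFthm} (the projective plane case), reading a representation of the enhanced Hirzebruch quiver $\boldbar{\mathbf Q}^n$ as a morphism between two representations of the Hirzebruch quiver ${\mathbf Q}^n$, applying the functor $\K_{n,\bnu}$, and extracting the nested flag from the cohomology sheaves of the resulting short exact sequence of complexes. By Theorems~\ref{flagrepr} and \ref{isofunctHirz} it suffices to produce an isomorphism of functors $\eta^{\heartsuit}_n$ between the functor of families of $\Theta$-stable framed representations of $\boldbar{\mathbf Q}^n$ of dimension vector $\mathbf v=(c,c,c-c',c-c',1)$ and the functor $\mathfrak F^{\Sigma_n,\ell_\infty}_{1,0,c',c-c'}$ of families of length-$1$ framed flags of type $(1,0,c',c-c')$ on $(\Sigma_n,\ell_\infty)$, the latter being represented by $\Hilb^{c',c}(\Xi_n)$. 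Given such a family $\X$ over a scheme $T$, the arrows $a_1,a_2,c_1,\dots,c_n,i_1,\dots,i_{n-1},j$ at the vertexes $1,2,\infty$ form a family $\X_E$ of representations of ${\mathbf Q}^n$ with dimension vector $(c,c,1)$; the arrows $a_1'',a_2'',c_1'',\dots,c_n''$ at the vertexes $3,4$ form a family $\X''$ of representations of ${\mathbf Q}^n$ with $\W=0$ and dimension vector $(c-c',c-c',0)$; and the relations of Table~\ref{relenhHirz} involving $\varphi_1,\varphi_2$ say exactly that $\Phi=(\varphi_1,\varphi_2)\colon\X_E\to\X''$ is a morphism of such families --- in particular $\varphi_1 i_q=0$ is what makes the $i_q$ land in $\ker\varphi_1$.

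Next I would use $\Theta$-stability together with the sign constraints in \eqref{condTheta} to show that $\varphi_1,\varphi_2$ are fiberwise surjective and that $\X_E$ is $\Gamma_c$-stable: the components $\theta_3,\theta_4<0$ prevent the subrepresentation of $\X$ supported at $\{1,2,\infty\}$ with the images $\operatorname{im}\varphi_1,\operatorname{im}\varphi_2$ from being proper, while the conditions $(\theta_1,\theta_2)\in\Gamma_c$ and $\theta_1+\theta_2+(\theta_3+\theta_4)(c-c')>0$ are tailored so that a $\Gamma_c$-destabilizing sub/quotient of $\X_E$ would extend to a $\Theta$-destabilizing one of $\X$. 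Then $\X''$ is a genuine quotient family and $\X_F:=\ker\Phi$ is a family of ${\mathbf Q}^n$-representations with locally free vertex spaces and dimension vector $(c',c',1)$. Fixing $\bnu\in\P^1$ with $\X_E\in\A_{n,\bnu}$ (possible since $\X_E$ is $\Gamma_c$-stable), the relation $\varphi_2 a_p=a_p''\varphi_1$ and surjectivity of the $\varphi$'s force $a_\bnu''=\nu_2 a_1''+\nu_1 a_2''$ to be an isomorphism, so $\X''$ and $\X_F$ also lie in $\A_{n,\bnu}$. Proposition~\ref{propK2}(3) then gives that $\X_F$ is $\Gamma_{c'}$-stable, and Proposition~\ref{propK2}(4) produces (recall that $\K_{n,\bnu}$ is contravariant) a short exact sequence of complexes on $T\times\Sigma_n$
$$0\to\K_{n,\bnu}(\X'')\to\K_{n,\bnu}(\X_E)\to\K_{n,\bnu}(\X_F)\to0.$$

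Passing to the long exact cohomology sequence and using the cohomology vanishing recalled just before Proposition~\ref{propK1} (so $\mathcal H^{-1}\K_{n,\bnu}(\X_E)=\mathcal H^1\K_{n,\bnu}(\X_E)=0$ and likewise for the $\Gamma_{c'}$-stable $\X_F$, with $\mathcal H^0$ of each torsion-free) together with Proposition~\ref{propK1}(1) for $T$-flatness, this collapses to
$$0\to\mathcal H^0\K_{n,\bnu}(\X'')\to E\to F\to\mathcal H^1\K_{n,\bnu}(\X'')\to0,$$
where $E:=\mathcal H^0\K_{n,\bnu}(\X_E)$ and $F:=\mathcal H^0\K_{n,\bnu}(\X_F)$ are the $T$-flat families of ideal sheaves of length-$c$ and length-$c'$ $0$-cycles on $\Xi_n$ provided by Theorem~\ref{isofunctHirz} (with their framings along $\ell_\infty$ coming from the monad presentations). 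The crucial point is $\mathcal H^0\K_{n,\bnu}(\X'')=0$: since $\X''$ has $\W=0$ one has $I_\bnu=0=J$ in the monad \eqref{monadeventually}, and restricting to $T\times\ell_\infty$ kills every term carrying $s_\infty$; the resulting complex then satisfies $\operatorname{im}\alpha_\bnu=\ker\beta_\bnu$ --- this is precisely the computation of Lemma~\ref{cohomdeg} --- so $\mathcal H^0\K_{n,\bnu}(\X'')$ is supported off $\ell_\infty$, has rank $0$, and injects into the torsion-free sheaf $E$, hence vanishes. Therefore $E\hookrightarrow F$ with $F/E\cong\mathcal H^1\K_{n,\bnu}(\X'')$ a $T$-flat rank-$0$ sheaf supported away from $\ell_\infty$ whose fibers are length-$(c-c')$ $0$-cycles, and the framing of $F$ restricts to one of $E$. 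This yields a family $(E,F,\phi)$ of framed flags of type $(1,0,c',c-c')$, i.e.\ a $T$-point of $\mathfrak F^{\Sigma_n,\ell_\infty}_{1,0,c',c-c'}$; naturality in $T$ and independence of the auxiliary $\bnu$ (via the quasi-isomorphisms of Proposition~\ref{propK1}(2)) are routine, and this defines $\eta^{\heartsuit}_n$.

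To see $\eta^{\heartsuit}_n$ is an isomorphism I would reverse the steps: given a family $(E,F,\phi)$ of such framed flags, Theorem~\ref{isofunctHirz} gives (locally on $T$, for suitable $\bnu$) a $\Gamma_c$-stable $\X_E$ with $\mathcal H^0\K_{n,\bnu}(\X_E)=E$ and a $\Gamma_{c'}$-stable $\X_F$ with $\mathcal H^0\K_{n,\bnu}(\X_F)=F$; lifting $E\subset F$ to a morphism of complexes $\K_{n,\bnu}(\X_E)\to\K_{n,\bnu}(\X_F)$ and using faithfulness of $\K_{n,\bnu}$ yields a morphism $\X_F\to\X_E$ of ${\mathbf Q}^n$-representations, fiberwise injective because $F/E$ has $0$-dimensional support disjoint from $\ell_\infty$, with quotient $\X''$ of dimension vector $(c-c',c-c',0)$ and $\W=0$; the triple $(\X_E,\X'',\varphi)$ with $\varphi\colon\X_E\to\X''$ the projection assembles, via the relations of Table~\ref{relenhHirz} (all of which hold automatically), into a $\Theta$-stable family of representations of $\boldbar{\mathbf Q}^n$ of dimension vector $\mathbf v$, the stability being the converse of the argument above; tracing through shows the two constructions are mutually inverse, and passing to representing schemes gives $\mathcal M^{n\heartsuit s}_{\mathbf v,\Theta}\cong\Hilb^{c',c}(\Xi_n)$. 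The conceptual skeleton being exactly the projective-plane argument, the main obstacle is technical and concentrated in two places: the explicit restriction-to-$\ell_\infty$ computation behind $\mathcal H^0\K_{n,\bnu}(\X'')=0$ (Lemma~\ref{cohomdeg}), genuinely heavier than on $\P^2$ because $\cO_{\Sigma_n}(1,0)$ restricts to $\cO_{\ell_\infty}(n)$ and the monad carries the extra section $s_{\mathfrak e}$; and the interplay with the parameter $\bnu$ --- a family need not lie in a single $\A_{n,\bnu}$, forcing one to work locally on $T$ and glue --- together with pinning down the cone \eqref{condTheta} so that $\Theta$-stability of the enhanced representation is exactly equivalent, in both directions, to "$\varphi$ surjective, $\X_E$ $\Gamma_c$-stable, $\X_F$ $\Gamma_{c'}$-stable."
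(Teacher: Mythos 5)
Your proposal is correct and follows essentially the same route as the paper's proof: read a $\Theta$-stable representation of $\boldbar{\mathbf Q}^n$ as a surjection of ${\mathbf Q}^n$-representations with $\Gamma_c$-stable source and $\W=0$ quotient, show via the sign conditions in \eqref{condTheta} that $\Theta$-stability is equivalent to surjectivity of the $\varphi$'s plus $\Gamma_c$-stability of the left-hand data (the paper's Lemma \ref{carac}), take the kernel (Lemma \ref{rmkcarac} / Proposition \ref{propK2}(3)), apply the contravariant $\K_{n,\bnu}$, and collapse the long exact cohomology sequence using Lemma \ref{cohomdeg} and the stability vanishings to extract the nested pair of ideal sheaves. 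The only presentational difference is that you go directly to the functorial statement, whereas the paper first runs the argument set-theoretically through the local ADHM data before categorifying it in Section \ref{schemiso}.
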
 
 We shall prove this first at the set-theoretic level, and then, after categorizing the construction, we shall prove the scheme-theoretic isomorphism. Notice that the conditions in eq.~\eqref{condTheta} identify an open cone in the space of stability parameters (which in this case is $4$-dimensional; the last entry is fixed by the usual normalization). We do not know if this cone coincides with a chamber, but Lemma \ref{carac} ensures at least that it is part of one (i.e., all stability parameters satisfying conditions \eqref{condTheta} are generic and define the same notion of semistability).
 
The rest of this  Section is divided in three parts. In the first  we characterize the stability we are interested in.      In the second part we prove a set-theoretic version of Theorem \ref{mainresult}.
In the third part we  categorize these constructions and   conclude the proof of Theorem \ref{mainresult}.

		\subsection{Stability for the quiver $\boldbar {\mathbf Q}^n$}
		We can express the stability conditions for the quiver $\boldbar {\mathbf Q}^n$ as follows. Consider a stability parameter $\Theta =(\theta_1, \theta_2, \theta_3, \theta_4)\in \mathbb{R}^4$ and fix a dimension vector $\mathbf v =(v_1, v_2, v_3, v_4, r)\in \mathbb{N}^5$ 
		(the order of the vertices is 1, 2, 3, 4, $\infty$). 
		 Let \begin{equation} \label{reprenhHirz} X=(V_1, V_2, V_3, V_4, W, A_1, A_2, C_1, \ldots, C_n; A_1', A_2', C_1',\ldots, C_n',\ell, h_1, \ldots, h_{n-1}, F_1, F_2),\end{equation}
		where: 
		
		\noindent $\bullet $
		 $V_1, V_2, V_3, V_4$ and $W$ are $\mathbb{C}$-vector spaces of the dimensions given by $\mathbf v$; 
		 
			\noindent $\bullet$   $A_1, A_2\in \Hom(V_1, V_2)$; \ \ \ $C_1,\ldots, C_n\in \Hom(V_2, V_1)$;  \ \ \   $A_1', A_2'\in \Hom(V_3, V_4)$;  \\$C_1',\ldots, C_n'\in \Hom(V_4, V_3)$;  \ \ \  $\ell\in \Hom(V_1, W)$;  \ \ \  $h_1, \ldots, h_{n-1}\in \Hom(W, V_1)$;  \\ $F_1\in \Hom(V_1, V_3)$;   \ \ \  $F_2\in \Hom(V_2, V_4)$
			
			\noindent (for $n=1$ it is understood there are no maps $h_t$);
			
			\noindent $\bullet$   the maps   satisfy the relations
			$$ 
			(n=1)  \ \ 	A_1C_1A_2=A_2C_1A_1; \quad A_1''C_1''A_2''=A_2''C_1''A_1'' ; \quad  F_1C_1=C_1''F_2; 
			\quad F_2A_1=A_1''F_1;\quad F_2A_2=A_2''F_1 
			$$

			 \begin{eqnarray} (n=2) &\quad& 
				A_1C_q=A_2C_{q+1},; \quad  C_qA_1+h_q\ell=C_{q+1}A_2; \quad 
				A_1'C_q'=A_2'C_{q+1}';  \\
				&\quad&  C_q'A_1'=C_{q+1}'A_2'; \quad 
				F_2A_p=A_p'F_1; \quad F_1C_t=C_t'F_2,; \quad 
				F_1h_q=0 \\ &\quad& \text{with} \ q=1,\dots,n-1; \ \ p=1,2; \ \ t=1,\dots,n.
			\end{eqnarray}
	We recall from \cite{Ginz} (Prop.~5.1.5) the following characterization of semistability.
	 \begin{lemma} The representation $X$ is $\Theta $-semistable if the following conditions hold:
	 \begin{enumerate}
			\item $\Theta \cdot\dim(S):=\theta_1s_1+\theta_2s_2+\theta_3s_1'+\theta_4s_2'\leq 0$ 
			for all subrepresentations $S=(S_1, S_2, S_1', S_2')$ such that $S_1\subseteq \ker(\ell)$; 		
			\item $\Theta \cdot\dim(S)\leq \Theta \cdot\dim(X)$ for all subrepresentations $S=(S_1, S_2, S_1', S_2')$ such that $S_1\supseteq \mathrm{Im}(h_i)$ for $i\in \{1, \ldots, n-1\}$.
			\end{enumerate}
		$X$ is $\Theta $-stable if the inequalities are strict for $0\neq S\subsetneq X.$
		\end{lemma}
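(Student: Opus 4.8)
The statement is the specialization of King's general numerical criterion for $\Theta$-(semi)stability of framed quiver representations (as presented in \cite{Ginz}, Prop.~5.1.5) to the enhanced Hirzebruch quiver $\boldbar{\mathbf Q}^n$ with the chosen framing at the vertex $\infty$. The plan is therefore to recall the general criterion and then account for how the framing at $\infty$ collapses the two naive stability inequalities (one per destabilizing type of subrepresentation) into the two clauses displayed in the Lemma. First I would recall that, for a quiver with a single framing vertex, one standard device (going back to King) is to trade framed representations for ordinary representations of an extended quiver: one adjoins a ghost vertex carrying a one-dimensional space and fixes its arrows to/from $W$, so that the framing data $\ell$ and the $h_i$ become genuine arrows. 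Under this dictionary, an ordinary subrepresentation either contains the ghost vertex or not, and these two cases are exactly what produce the two clauses.

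\textbf{Key steps.} I would carry out the following in order. (1) Write down the extended quiver and recall that $\Theta$-stability of the framed representation $X$ is equivalent to $\widetilde\Theta$-stability of its image in the extended quiver, where $\widetilde\Theta$ assigns to the ghost vertex the normalizing weight $-\Theta\cdot\mathbf v/r$ (here $r=1$, so $W$ is one-dimensional and the normalization $\sum_i v_i\theta_i=0$ enforced earlier in the excerpt makes the ghost weight vanish on $X$ itself). (2) Specialize King's inequality $\widetilde\Theta\cdot\dim(\widetilde S)\le 0$ to the two cases. In the case where the subrepresentation does \emph{not} contain the ghost vertex, the framing map $\ell$ must kill the $V_1$-part, i.e.\ $S_1\subseteq\ker(\ell)$, and the ghost contribution drops out, leaving exactly $\theta_1 s_1+\theta_2 s_2+\theta_3 s_1'+\theta_4 s_2'\le 0$; this is clause (1). (3) In the case where the subrepresentation \emph{does} contain the ghost vertex, the arrows $h_i\colon W\to V_1$ force $\mathrm{Im}(h_i)\subseteq S_1$, and the ghost now contributes its (normalized) weight; rearranging and using $\Theta\cdot\dim(X)=0$ gives the co-condition $\Theta\cdot\dim(S)\le\Theta\cdot\dim(X)$ of clause (2). (4) Finally, observe that strictness of all inequalities for $0\neq S\subsetneq X$ is precisely the passage from semistability to stability, which I would simply transcribe.

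\textbf{Main obstacle.} The genuinely delicate point is the bookkeeping of signs and of the normalization in step (3): one must check that the weight assigned to the ghost vertex, combined with the constraint $\sum_i v_i\theta_i=0$ that fixes $\theta_\infty$, converts King's single homogeneous inequality into the \emph{inhomogeneous} form $\Theta\cdot\dim(S)\le\Theta\cdot\dim(X)$ with the correct orientation, and that the subrepresentation condition correctly reads off as $S_1\supseteq\mathrm{Im}(h_i)$ for all $i$ (rather than for some $i$). The two clauses are dual to each other—one is a sub-condition constrained by $\ker(\ell)$, the other a quotient-type condition constrained by $\mathrm{Im}(h_i)$—and the only real work is to confirm that the framing maps $\ell$ and $\{h_i\}$ route the two cases to the two clauses with no cross-terms surviving. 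Everything else is a direct transcription of the cited proposition, so I expect the write-up to be short, with the verification in step (3) being the one place deserving an explicit line of computation.
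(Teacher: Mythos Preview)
Your plan is essentially correct and is the standard argument behind Proposition~5.1.5 in \cite{Ginz}, which is precisely what the paper invokes: the paper does not supply a proof of this lemma at all but simply cites Ginzburg's result. So there is nothing to compare against beyond noting that your write-up would reconstruct the cited proposition in the present notation.

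One small presentational point: you speak of ``adjoining a ghost vertex'' so that $\ell$ and the $h_i$ ``become genuine arrows,'' but in the quiver $\boldbar{\mathbf Q}^n$ the vertex $\infty$ carrying $W$ is already present and $\ell$, $h_i$ are already arrows of the quiver. The dichotomy you describe is then simply the case split on whether the subrepresentation's component at $\infty$ is $0$ or all of $W$ (these being the only options since $\dim W=1$); no auxiliary extension is needed. This does not affect the substance of your argument---your steps (2)--(4) go through verbatim once you make that identification---but it would streamline the exposition.
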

		Now we prove two Lemmas that allow us to characterize the stable representations of the quiver $\boldbar {\mathbf Q}^n$.
						\begin{lemma} \label{carac}
			Consider $\Theta =(\theta_1, \theta_2, \theta_3, \theta_4)\in \mathbb{R}^4.$ Suppose that
			\begin{itemize}
                \item $(\theta_1,\theta_2)\in\Gamma_c \text{ (cf. eq.~\eqref{eq:gamma_c})};$
				\item $\theta_3,\ \theta_4<0;$
				\item $\theta_1+\theta_2+(\theta_3+\theta_4)(c-c')>0.$
			\end{itemize}
			Let $X=(V_1, V_2, V_3, V_4, W, A_1, A_2, C_1, \ldots, C_n, A_1', A_2', C_1', \ldots, C_n', \ell, h_1, \ldots, h_{n-1}, F_1, F_2)$ be a representation  of $\boldbar {\mathbf Q}^n$ with dimension vector $(c, c, c-c', c-c', 1)$. The following statements are equivalent:
			\begin{enumerate}
				\item $X$ is $\Theta $-stable;
				\item $X$ is $\Theta $-semistable;
				\item $X$ satisfies the following conditions:
				\begin{itemize}
					\item [(C1)] $F_1$ and $F_2$ are surjective;
					\item [(C2)] $\bar X:=(V_1, V_2, W, A_1, A_2, C_1, \ldots, C_n, \ell, h_1, \ldots, h_{n-1})$ is a $\Gamma_c$-stable representation of the quiver ${\mathbf Q}^n.$ 
				\end{itemize} 
			\end{enumerate}
		\end{lemma}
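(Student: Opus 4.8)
The plan is to prove the chain of implications $(1)\Rightarrow(2)\Rightarrow(3)\Rightarrow(1)$, using the characterization of semistability in the preceding lemma (from \cite{Ginz}).

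The implication $(1)\Rightarrow(2)$ is trivial. For $(2)\Rightarrow(3)$, I would argue by contraposition on (C1) and (C2). First, suppose $F_1$ is not surjective. Then $(0,0,\operatorname{Im}(F_1),V_4)$ — or rather a suitable subrepresentation containing the image of the $a_i''$ applied to $\operatorname{Im}(F_1)$ — gives a proper subrepresentation $S$ with $S_1=0\subseteq\ker(\ell)$; evaluating $\Theta\cdot\dim(S)=\theta_3 s_1'+\theta_4 s_2'$ and using $\theta_3,\theta_4<0$, this quantity is $\ge 0$ unless $S=0$, contradicting semistability (condition 1 of the quoted lemma, with strict inequality failing). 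A symmetric/companion argument handles $F_2$; here one must be slightly careful, using the relations $F_2A_p=A_p'F_1$ so that once $F_1$ is surjective the image of $F_2$ is at least as big as $A_p'F_1(V_1)$, and then a dimension count with $\theta_1+\theta_2+(\theta_3+\theta_4)(c-c')>0$ forces surjectivity. Next, assuming (C1) holds, I would show (C2): stability of $\bar X$ as a representation of ${\mathbf Q}^n$ with dimension vector $(c,c,1)$ and parameter $(\theta_1,\theta_2)\in\Gamma_c$. Given a destabilizing subrepresentation $\bar S=(S_1,S_2)$ of $\bar X$, one lifts it to a subrepresentation $S=(S_1,S_2,F_1(S_1),F_2(S_2))$ of $X$ (the relations guarantee this is a subrepresentation), and then compares $\Theta\cdot\dim(S)$ against the appropriate bound; since $\theta_3,\theta_4<0$ the extra terms $\theta_3\dim F_1(S_1)+\theta_4\dim F_2(S_2)$ only help, and one recovers the ${\mathbf Q}^n$-(in)stability inequality for $\bar S$, contradicting $\Gamma_c$-(semi)stability. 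One must treat both the $S_1\subseteq\ker\ell$ case and the $S_1\supseteq\operatorname{Im}(h_i)$ case, matching the two conditions in the quoted lemma to the two defining inequalities of the chamber $\Gamma_c$.

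For $(3)\Rightarrow(1)$, assume (C1) and (C2) and let $0\neq S=(S_1,S_2,S_1',S_2')\subsetneq X$ be a subrepresentation. I distinguish the two cases of the lemma. If $S_1\subseteq\ker(\ell)$: then $(S_1,S_2)$ is a subrepresentation of $\bar X$ with $S_1\subseteq\ker(\ell)$, so by $\Gamma_c$-stability of $\bar X$ we get $\theta_1 s_1+\theta_2 s_2<0$ unless $(S_1,S_2)=0$. If $(S_1,S_2)\neq 0$ we are done since $\theta_3 s_1'+\theta_4 s_2'\le 0$. If $(S_1,S_2)=0$, then $S=(0,0,S_1',S_2')$ is a nonzero subrepresentation supported on vertices $3,4$, and $\theta_3 s_1'+\theta_4 s_2'<0$ because $\theta_3,\theta_4<0$ and $(S_1',S_2')\neq 0$; this gives strict inequality. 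The case $S_1\supseteq\operatorname{Im}(h_i)$ for all $i$ is handled dually: $(S_1,S_2)$ is then a subrepresentation of $\bar X$ of the type controlled by the second inequality defining $\Gamma_c$, yielding $\theta_1(c-s_1)+\theta_2(c-s_2)<0$ (equivalently $\theta_1 s_1+\theta_2 s_2>\theta_1 c+\theta_2 c$) unless $S_1=V_1,S_2=V_2$; combined with surjectivity of $F_1,F_2$ (which forces $S_1'=V_3,S_2'=V_4$ once $S_1=V_1,S_2=V_2$, since $S_1'\supseteq F_1(S_1)=F_1(V_1)=V_3$ and similarly for $S_2'$), and a short computation with $\theta_1+\theta_2+(\theta_3+\theta_4)(c-c')>0$, one gets $\Theta\cdot\dim(S)<\Theta\cdot\dim(X)$ with strictness, and if $(S_1,S_2)\subsetneq(V_1,V_2)$ the $\Gamma_c$-stability already provides the strict gap. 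Thus $X$ is $\Theta$-stable.

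The main obstacle I anticipate is the bookkeeping in the mixed cases of $(3)\Rightarrow(1)$ and $(2)\Rightarrow(3)$: one must verify that the naive "lift" $S\mapsto(S_1,S_2,F_1(S_1),F_2(S_2))$ and the "projection" $S\mapsto(S_1,S_2)$ genuinely respect the quiver relations in Table~\ref{relenhHirz} (in both the $n=1$ and $n\ge 2$ forms), and that the inequalities coming from the chamber $\Gamma_c$ — which is a \emph{strict} two-sided cone, see eq.~\eqref{eq:gamma_c} — translate exactly into the two conditions of the $\cite{Ginz}$ lemma without sign errors. The role of the third hypothesis $\theta_1+\theta_2+(\theta_3+\theta_4)(c-c')>0$ is specifically to rule out the "large" subrepresentations with $S_1=V_1$, $S_2=V_2$ but $S_i'\subsetneq V_{i+2}$; pinning down precisely where it is needed (and that $\theta_3,\theta_4<0$ alone does not suffice there) is the one genuinely delicate point, and mirrors the analogous step on \cite[p.~148]{JvF} in the $\P^2$ case.
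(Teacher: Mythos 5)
Your overall architecture $(1)\Rightarrow(2)\Rightarrow(3)\Rightarrow(1)$, the use of the Ginzburg-type characterization of semistability, and the reduction of (C2) and of $(3)\Rightarrow(1)$ to the dimension inequalities of Lemma \ref{caractbblr} all match the paper's proof. However, your argument for (C1) in the step $(2)\Rightarrow(3)$ is broken. You propose to detect non-surjectivity of $F_1$ via a subrepresentation of the form $S=(0,0,S_1',S_2')$ with $S_1'\subseteq\mathrm{Im}(F_1)$, invoking condition 1 of the semistability lemma and claiming that $\Theta\cdot\dim(S)=\theta_3 s_1'+\theta_4 s_2'\ge 0$. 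Since $\theta_3,\theta_4<0$ and $s_1',s_2'\ge 0$, this quantity is in fact always $\le 0$ (and $<0$ when $S\neq 0$), so no such subrepresentation can ever violate semistability, regardless of whether $F_1$ is surjective. Subrepresentations supported only at the vertices $3$ and $4$ are simply invisible to condition 1 under these sign hypotheses. The correct mechanism, which is what the paper uses, is condition 2 applied to $\widetilde X=(V_1,V_2,\mathrm{Im}(F_1),\mathrm{Im}(F_2))$ (which automatically contains the images of the $h_i$ at vertex $1$): semistability gives $\theta_3\dim\mathrm{Im}(F_1)+\theta_4\dim\mathrm{Im}(F_2)\le(\theta_3+\theta_4)(c-c')$, while $\theta_3,\theta_4<0$ and $\dim\mathrm{Im}(F_i)\le c-c'$ give the reverse inequality for free; equality then forces both images to have dimension $c-c'$. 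No separate argument for $F_2$ via the relations $F_2A_p=A_p'F_1$ is needed.

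Two smaller points. In the second case of $(3)\Rightarrow(1)$ your intermediate inequality ``$\theta_1 s_1+\theta_2 s_2>\theta_1 c+\theta_2 c$'' has the wrong sign: $\Gamma_c$-stability (via Lemma \ref{caractbblr}) gives $s_1\le s_2$ and hence $\theta_1 s_1+\theta_2 s_2\le(\theta_1+\theta_2)s_1$, and the quantitative gap you need is $\theta_1 s_1+\theta_2 s_2\le(\theta_1+\theta_2)(c-1)$, obtained from $s_1\le c-1$ (the case $s_1=c$ being excluded because surjectivity of $F_1,F_2$ then forces $S=X$); the qualitative strict inequality $\theta_1 s_1+\theta_2 s_2<(\theta_1+\theta_2)c$ alone is not enough to absorb the negative term $(\theta_3+\theta_4)(c-c')$ on the right-hand side. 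Finally, in your deduction of (C2) from semistability, lifting $(S_1,S_2)$ to $(S_1,S_2,F_1(S_1),F_2(S_2))$ is legitimate (the relations do make it a subrepresentation, and $\theta_3 s_1'+\theta_4 s_2'\ge(\theta_3+\theta_4)(c-c')$ is the bound that lets the hypothesis $\theta_1+\theta_2+(\theta_3+\theta_4)(c-c')>0$ produce the contradiction), but the phrase ``the extra terms only help'' is misleading: they are nonpositive and work against you, and it is precisely the third hypothesis that controls them, as you correctly anticipate at the end.
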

Before proving the lemma, we need to recall the following characterization of $\Gamma_c$-stability (Lemma 4.7 of \cite{bblr}):
\begin{lemma}\label{caractbblr}
    A framed representation $$(V_0,V_1,W,A_1,A_2,C_1,\dots,C_n,I_1,\dots,I_{n-1},J)$$ of the quiver $\mathbf{Q}^n$ is $\Gamma_c$-stable if and only if:
    \begin{enumerate} \itemsep=-2pt
        \item For all subrepresentations $(S_0,S_1)$ such that $S_0\subseteq\ker J$, $\dim S_0<\dim S_1$, unless $S_0=S_1=0$.
        \item For all subrepresentations $(S_0,S_1)$ such that $S_0\supseteq\mathrm{Im}\, I_1,\dots,\mathrm{Im}\, I_{n-1}$, $\dim S_0\leq \dim S_1$.
    \end{enumerate}
\end{lemma}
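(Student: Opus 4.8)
The plan is to derive the combinatorial criterion from King's numerical description of $\Theta$-stability for framed representations and then to exploit the very special shape of the chamber $\Gamma_c$, where the weighted inequalities degenerate into plain inequalities between dimensions. Since $\Gamma_c$ is a chamber, I may fix a single parameter $\Theta=(\theta_0,\theta_1)\in\Gamma_c$ and set $\theta_\infty=-c(\theta_0+\theta_1)$ by the normalization $\sum_i v_i\theta_i=0$ (here $\mathbf v=(c,c,1)$); genericity then makes semistability coincide with stability, so it is enough to characterize $\Theta$-stability.

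First I would recall King's criterion in framed form (the analogue for ${\mathbf Q}^n$ of the Ginzburg-type lemma stated just above, with the co-stability sign convention used throughout the paper): $X$ is $\Theta$-stable iff $\theta_0\dim S_0+\theta_1\dim S_1+\theta_\infty\dim S_\infty<0$ for every proper nonzero subrepresentation $S=(S_0,S_1,S_\infty)$. Because $\dim W=1$, one has $S_\infty\in\{0,W\}$, and this dichotomy is exactly the one appearing in the statement: $S_\infty=0$ forces $J(S_0)=0$, i.e. $S_0\subseteq\ker J$, whereas $S_\infty=W$ forces $\operatorname{Im}I_q\subseteq S_0$ for all $q$. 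Writing $s_i=\dim S_i$, the stability inequality becomes $\theta_0 s_0+\theta_1 s_1<0$ for the first family (with $S$ nonzero) and $\theta_0 s_0+\theta_1 s_1<c(\theta_0+\theta_1)$ for the second (with $S$ proper).

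The key step is an elementary numerical lemma. Put $t=-\theta_1/\theta_0$; dividing the defining inequalities of $\Gamma_c$ by $\theta_0>0$ gives $\frac{c-1}{c}<t<1$. I would prove that, for integers $0\le a,b\le c$, one has $\theta_0 a+\theta_1 b<0\iff a<b$, and that, for $(p,q)\ne(0,0)$ with $0\le p,q\le c$, one has $\theta_0 p+\theta_1 q>0\iff p\ge q$. After dividing by $\theta_0$ these reduce to comparing $a$ with $tb$ (resp. $p$ with $tq$): the bound $t<1$ gives one implication at once, while the bound $t>\frac{c-1}{c}$, rewritten as $\tfrac{1}{1-t}>c$, is precisely what forces $m-1<tm$ for every $m\le c$ and thus yields the two converses. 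Applying the first equivalence to the $\ker J$ family produces condition (1), the clause ``unless $S_0=S_1=0$'' recording the excluded zero subrepresentation. For the $\operatorname{Im}I_q$ family I would pass to the quotient dimensions $p=c-s_0$, $q=c-s_1$, so that the stability inequality reads $\theta_0 p+\theta_1 q>0$; the second equivalence turns this into $p\ge q$, that is $s_0\le s_1$, which is condition (2). The improper subrepresentation $S=X$ is not tested by King but trivially satisfies $s_0\le s_1$, so quantifying over all subrepresentations in (2) is harmless.

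The subtle points, and where the geometry of $\Gamma_c$ really enters, are the two walls. The left wall $\theta_1>-\theta_0$ gives $\theta_0+\theta_1>0$, which makes the diagonal case $\dim S_0=\dim S_1>0$ \emph{violate} the $\ker J$ inequality (hence the strict ``$<$'' in (1)) while \emph{satisfying} the $\operatorname{Im}I_q$ inequality (hence the non-strict ``$\le$'' in (2)); this is the genuine source of the asymmetry between the two conditions. The right wall $\theta_1<-\frac{c-1}{c}\theta_0$ is what guarantees that the weighted inequalities read off the integer orders $<$ and $\le$ exactly up to dimension $c$, rather than the strictly weaker relation $a<tb$; without the sharp value $\frac{c-1}{c}$, pairs with $b$ near $c$ would be misclassified, which is exactly why the chamber depends on $c$. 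Once this numerical lemma is in place the remainder is bookkeeping over which subrepresentations are allowed to be zero or improper. I expect the main obstacle to be stating King's criterion in the correct framed/co-stability normalization, so that the two subrepresentation families line up with the strict and non-strict inequalities as written, rather than any difficulty in the estimates themselves.
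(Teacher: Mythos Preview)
Your argument is correct. Note, however, that the paper does not give its own proof of this lemma: it is stated as a recollection of Lemma~4.7 of \cite{bblr} and used as a black box in the subsequent proof of Lemma~\ref{carac}, so there is no argument in the present paper to compare against. Your approach---reducing King's numerical criterion to the two integer equivalences $\theta_0 a+\theta_1 b<0\Leftrightarrow a<b$ and (for $(p,q)\ne(0,0)$) $\theta_0 p+\theta_1 q>0\Leftrightarrow p\ge q$ on the range $0\le a,b,p,q\le c$, via $t=-\theta_1/\theta_0\in\bigl(\tfrac{c-1}{c},1\bigr)$ and the observation $\tfrac{1}{1-t}>c$---is the natural direct proof and is essentially how one argues in \cite{bblr}. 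The one genuine subtlety is the asymmetry between the strict ``$<$'' in condition~(1) and the non-strict ``$\le$'' in condition~(2); you correctly locate its source in the sign $\theta_0+\theta_1>0$ coming from the left wall of $\Gamma_c$, which makes the diagonal $s_0=s_1>0$ fail the first test but pass the second. One cosmetic remark: the phrase ``co-stability sign convention used throughout the paper'' is slightly off---the co-stability remark in the paper refers to the transposed ADHM data inside the monad construction, not to the sign convention in King's criterion for $\mathbf{Q}^n$, which is the standard one you are in fact using.
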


		\begin{proof}[Proof of Lemma \ref{carac}]
			If $X$ is $\Theta $-stable, then it is obviously $\Theta $-semistable. We assume that $X$ is $\Theta $-semistable and prove (C1) and (C2). Note that $\tilde X :=(V_1,V_2, \mathrm{Im}(F_1), \mathrm{Im}(F_2))$ is a subrepresentation of $X$ such that $V_1=S_1\supseteq \mathrm{Im}(h_i)$ for all $i\in \{1, \ldots, n-1\}.$ By the $\Theta $-semistability of $X$ we have 
			$$\theta_3\dim(\mathrm{Im}(F_1))+\theta_4\dim(\mathrm{Im}(F_2))\leq\theta_3(c-c')+\theta_4(c-c').$$
			Since $\theta_3$ and $\theta_4$ are negative, we also have
			$$\theta_3\dim(\mathrm{Im}(F_1))+\theta_4\dim(\mathrm{Im}(F_2))\geq\theta_3(c-c')+\theta_4(c-c').$$
			By combining these inequalities we have 
			$$\theta_3\dim(\mathrm{Im}(F_1))+\theta_4\dim(\mathrm{Im}(F_2))=\theta_3(c-c')+\theta_4(c-c').$$
			Therefore
			$$\dim(\mathrm{Im}(F_1))=\dim(\mathrm{Im}(F_2))=c-c',$$
			i.e.,  $F_1$ and $F_2$ are surjective.
			
			Now we prove (C2) by using the characterization of Lemma \ref{caractbblr}.  Consider a nonzero subrepresentation $S=(S_1, S_2)$ of $\bar X$ such that $S_1\subseteq \ker(\ell)$. Then  
			$$\tilde X :=(S_1, S_2,V_3,V_4)$$
			is a subrepresentation of $X$ such that $S_1\subseteq \ker(\ell).$ By the $\Theta $-semistability of $X$ 
		$$\theta_1s_1+\theta_2s_2+(\theta_3+\theta_4)(c-c')\leq 0$$ where $s_1=\dim S_1$ and $s_2=\dim S_2$.
			Suppose $s_1\geq s_2$, and $s_1\neq 0$. We have   $\theta_2s_2\geq\theta_2s_1$, since $\theta_2<0.$
			Thus,
			\begin{multline}
				\theta_1s_1+\theta_2s_2+(\theta_3+\theta_4)(c-c')  \ge   (\theta_1+\theta_2)s_1+(\theta_3+\theta_4)(c-c')
				\ge \theta_1+\theta_2+(\theta_3+\theta_4)(c-c')
				>  0,
			\end{multline} 
			a contradiction. So $s_1<s_2$, as wanted. 
			
			On the other hand, suppose that $S=(S_1, S_2)$ is proper and $S_1\supseteq \mathrm{Im}(h_t)$ for every $t\in\{1, \ldots, n-1\}.$ As before, we can consider $\tilde X :=(S_1, S_2, V_3,V_4)$ as a subrepresentation of $X$, as we have the relations between the maps inherited from the path algebra. Using again the  $\Theta $-semistability of $X$ we obtain  
			$$\theta_1s_1+\theta_2s_2\leq (\theta_1+\theta_2)c.$$
			Since $(\theta_1, \theta_2)$ are in $\Gamma_c$ we know that $$\theta_1+\theta_2<\dfrac{\theta_1}{c}.$$ Then, we conclude that
			$$\theta_1s_1+\theta_2s_2<\theta_1 \iff \theta_1(s_1-1)+\theta_2s_2<0.$$
			If $s_1>s_2$ we obtain
			$$(\theta_1+\theta_2)s_2\leq \theta_1(s_1-1)+\theta_2s_2<0.$$
			However, this cannot happen, since $\theta_1+\theta_2>0$ and $s_2\geq 0.$
			So $s_1\leq s_2$ as wanted. 
			
			Finally we prove that (3) implies (1). Consider a nonzero representation $\tilde X =(S_1, S_2, S_1', S_2')$ of $X,$ such that $S_1\subseteq \ker(\ell).$ If $(S_1, S_2)$ is a nonzero subrepresentation of $\bar X$ we obtain   $s_1<s_2$. As we are under the hypothesis that $(\theta_1, \theta_2)$  is in $\Gamma_c$  we see that 
			$$\dfrac{c-1}{c}<-\dfrac{\theta_2}{\theta_1}<1.$$
			Since $s_1<s_2\leq c$,
			$$\dfrac{s_1}{s_2}\leq \dfrac{c-1}{c}<-\dfrac{\theta_2}{\theta_1}.$$
			Consequently, we must have
			\begin{equation}\label{p1}
				\theta_1s_1+\theta_2s_2<0.
			\end{equation}
			Also, as   $\theta_3$ and $\theta_4$ are both negative 
			\begin{equation} \label{p2}
				\theta_3s_1'+\theta_4s_2'\leq 0.
			\end{equation}
			Adding these two inequalities we obtain the desired result.
			 
			If  $s_1=s_2=0$ then $s_i'>0$ for some $i\in \{1, 2\}$. Again, as  $\theta_3$ and $\theta_4$ are   negative,  one has
			$$\theta_1s_1+\theta_2s_2+\theta_3s_1'+\theta_4s_2'=\theta_3s_1'+\theta_4s_2'<0.$$

			By the $(\theta_1, \theta_2)$-stability of $\bar X$, the maps $h_1, \ldots, h_{n-1}$ are zero (see Theorem 3.8 in \cite{bblr-irr}), and we need to to prove that for any proper subrepresentation one has 
			$$\Theta \cdot(s_1, s_2, s_1', s_2')<\Theta \cdot(c, c, c-c', c-c').$$
			Using again the condition (C2) we obtain   $s_1\leq s_2$ and then $\theta_2s_2\leq \theta_2s_1$ as  $\theta_2<0$.
			Therefore, 
			$$\theta_1s_1+\theta_2s_2+\theta_3s_1'+\theta_4s_2'\leq (\theta_1+\theta_2)s_1+\theta_3s_1'+\theta_4s_2'.$$
			If $s_1=c$, we also have $s_2=c$ and then   $S=X$, as $F_1$ and $F_2$ are surjective. Thus  we can assume   $s_1\leq c-1$. So, 
			\begin{equation}\label{1p}
				\theta_1s_1+\theta_2s_2+\theta_3s_1'+\theta_4s_2'\leq (\theta_1+\theta_2)(c-1)+\theta_3s_1'+\theta_4s_2'.
			\end{equation}
			By hypothesis
			$$\theta_1+\theta_2+(\theta_3+\theta_4)(c-c')>0.$$
			So, 
			$$\theta_3s_1'+\theta_4s_2'<\theta_1+\theta_2+(\theta_3+\theta_4)(c-c')$$
			as $\theta_3s_1'+\theta_4s_2'\leq 0.$
			Adding $(\theta_1+\theta_2)(c-1)$ to both sides of this inequality  we have 
			\begin{equation}\label{2p}
				(\theta_1+\theta_2)(c-1)+\theta_3s_1'+\theta_4s_2'<(\theta_1+\theta_2)c+(\theta_3+\theta_4)(c-c').
			\end{equation}
			By combining \eqref{1p} and \eqref{2p}  we obtain the result:
			$$\theta_1s_1+\theta_2s_2+\theta_3s_1'+\theta_4s_2'<(\theta_1+\theta_2)c+(\theta_3+\theta_4)(c-c').$$ 
		\end{proof}
		\begin{lemma}\label{rmkcarac}
			Given $\Theta $ as in Lemma \ref{carac} and a $\Theta $-stable representation $X$  of 
			$\boldbar {\mathbf Q}^n$ with dimension vector $(c, c, c-c', c-c', 1)$, we can construct a $\Gamma_{c'}$-stable representation $X'$   by letting
			$$X':=(\bar V{_1}, \bar V{_2}, W, \bar {A_1}, \bar{A_2}, \bar{C_1}, \ldots, \bar{C_n}, \overline{\ell}, \bar h{_1}, \ldots, \bar h{_{n-1}}),$$
			where $\bar V{_1}=\ker(F_1)$, $\bar V{_2}=\ker(F_2)$ and the maps $\bar{A_1}, \bar{A_2}\in \Hom(\bar V{_1}, \bar V{_2})$, $\bar{C_1}, \ldots, \bar{C_n}\in \Hom(\bar V{_2}, \bar V{_1})$, $\overline{\ell}\in \Hom(\bar V{_1}, W)$ and $\bar h{_1}, \ldots, \bar h{_{n-1}}\in \Hom(W, \bar V_1)$ are defined by:
			\begin{eqnarray*}
				\bar{A_i}&=&A_i|_{\ker(F_1)},\ \ \mbox{for}\ i=1, 2; \quad
				\bar{C_i} = C_i|_{\ker(F_2)},\ \ \mbox{for}\ i=1, \ldots, n;\\[3pt]
				\bar h{_q}&=& h_q,\ \ \mbox{for}\ q=1, \ldots, n-1; \quad
								\overline{\ell} = \ell|_{\ker(F_1)}.
			\end{eqnarray*}
		\end{lemma}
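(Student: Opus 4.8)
The plan is to verify three things about $X'$: that it is a well-defined framed representation of $\mathbf{Q}^n$ with dimension vector $(c',c',1)$, that it satisfies the relations \eqref{relQn}, and that it is $\Gamma_{c'}$-stable. First, by Lemma \ref{carac}(C1) the maps $F_1,F_2$ are surjective, so $\dim\bar V_1=c-(c-c')=c'$ and $\dim\bar V_2=c'$, while $W$ is unchanged; hence the dimension vector is $(c',c',1)$. Next, to see that the restricted maps are well defined one reads off the relations of $\boldbar {\mathbf Q}^n$ in Table \ref{relenhHirz}: from $F_2A_p=A_p'F_1$ one gets $A_p(\ker F_1)\subseteq\ker F_2$, so $\bar A_p\in\Hom(\bar V_1,\bar V_2)$ is well defined; from $F_1C_t=C_t'F_2$ one gets $C_t(\ker F_2)\subseteq\ker F_1$, so $\bar C_t\in\Hom(\bar V_2,\bar V_1)$; from $F_1h_q=0$ one gets $\mathrm{Im}(h_q)\subseteq\ker F_1$, so $\bar h_q\in\Hom(W,\bar V_1)$; and $\bar\ell=\ell|_{\ker F_1}$ is automatically a map $\bar V_1\to W$. (For $n=1$ there are no $h_q$'s and one uses $F_2A_p=A_p''F_1$, $F_1C_1=C_1''F_2$ instead.) Finally, the relations \eqref{relQn} hold for $X'$ because $\bar X=(V_1,V_2,W,A_1,A_2,C_1,\dots,C_n,\ell,h_1,\dots,h_{n-1})$ is itself a representation of $\mathbf{Q}^n$ (part of the content of Lemma \ref{carac}(C2)), and every structure map of $X'$ is a restriction of the corresponding map of $\bar X$, so the defining identities remain valid when evaluated on $\bar V_1$ and $\bar V_2$.

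The only substantive step is $\Gamma_{c'}$-stability, and the key observation is that every subrepresentation $(S_0,S_1)$ of $X'$ — i.e.\ subspaces $S_0\subseteq\bar V_1$, $S_1\subseteq\bar V_2$ with $\bar A_i(S_0)\subseteq S_1$ and $\bar C_t(S_1)\subseteq S_0$ — is automatically a subrepresentation of $\bar X$, since $\bar A_i$ and $\bar C_t$ are restrictions of $A_i$ and $C_t$; moreover $\ker\bar\ell=\ker\ell\cap\bar V_1\subseteq\ker\ell$ and $\mathrm{Im}\,\bar h_q=\mathrm{Im}\,h_q$. Since $\bar X$ is $\Gamma_c$-stable by Lemma \ref{carac}(C2), one applies the characterization of Lemma \ref{caractbblr} to it: if $(S_0,S_1)$ is a subrepresentation of $X'$ with $S_0\subseteq\ker\bar\ell$, then it is a subrepresentation of $\bar X$ with $S_0\subseteq\ker\ell$, whence $\dim S_0<\dim S_1$ unless $S_0=S_1=0$; and if $S_0\supseteq\mathrm{Im}\,\bar h_q$ for all $q$, then $S_0\supseteq\mathrm{Im}\,h_q$ for all $q$, whence $\dim S_0\le\dim S_1$. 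These are exactly the two conditions of Lemma \ref{caractbblr} for a representation of $\mathbf{Q}^n$ with dimension vector $(c',c',1)$, so $X'$ is $\Gamma_{c'}$-stable.

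I do not expect a genuine obstacle: the whole argument amounts to unwinding definitions. The one point that needs care is this last step, namely checking that the hypotheses ``$S_0$ lies in the kernel of the framing map'' and ``$S_0$ contains the images of the $h_q$'s'' are preserved when a subrepresentation of $X'$ is reinterpreted as a subrepresentation of $\bar X$; once this is granted, the two inequalities of Lemma \ref{caractbblr} applied to $\bar X$ transport verbatim to $X'$.
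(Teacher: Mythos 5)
Your proposal is correct and follows essentially the same route as the paper's own proof: well-definedness of the restricted maps via the intertwining relations $F_2A_p=A_p'F_1$, $F_1C_t=C_t'F_2$, $F_1h_q=0$; the relations of $\mathbf{Q}^n$ by restriction; and $\Gamma_{c'}$-stability by viewing a subrepresentation of $X'$ as one of $\bar X$ and checking that the two hypotheses of Lemma \ref{caractbblr} (containment in $\ker\ell$, resp.\ containment of the images of the $h_q$) are preserved. No gaps.
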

		\begin{proof}
			We start by noticing that these maps are well-defined. In fact we have   $\mathrm{Im}(\bar{A_i})\subseteq \ker(F_2)$ and $\mathrm{Im}(\bar{C_i})\subseteq \ker(F_1)$, since $F_2A_i=A_i'F_1$ for $i\in\{1,2\}$ and $F_1C_i=C_i'F_2$ for $i\in\{1,\ldots, n\}$. The fact that we set $\bar{h}_q=h_q$ is justified by the relations $F_1h_q=0$. Moreover, all maps satisfy the necessary relations in   a straightforward way, for instance: take $x\in\ker(F_2)$ and observe that
			$$\bar{A_1}\bar{C_i}(x)=\bar{A_1}C_i(x)=A_1C_i(x)=A_2C_{i+1}(x)=\bar{A_2}\bar{C}_{i+1}(x).$$
			Hence, $\bar{A_1}\bar{C_i}=\bar{A_2}\bar{C}_{i+1}$. The other relations can be proved in an analogous way. For the $\Gamma_{c'}$-stability, consider a subrepresentation $S=(S_1, S_2)$ of $X'$ such that $S_1\subseteq \ker(\overline{\ell})$; one can see $S$ as a subrepresentation of $\bar X$ (notation of Lemma \ref{carac}), since we have the natural inclusions $i_1: \bar V{_1}=\ker(F_1)\to V_1$ and $i_2: \bar V{_2}=\ker(F_2)\to V_2,$ and we also know that 
			$$S_1\subseteq \ker(\overline{\ell})=\ker(\ell|_{\ker(F_1)})=\ker(\ell)\cap\ker(F_1)\subseteq \ker(\ell).$$ The $\Gamma_c$-stability of $\bar X$ implies that $s_1 < s_2$ or $s_1=s_2=0.$  By the same token, we use   $\bar h{_i}=h_i$ to show that if $S=(S_1, S_2)$ is a subrepresentation of $X'$ with $S_1\supseteq \mathrm{Im}(\bar h{_i})=\mathrm{Im}(h_{i})$, then by using    the $\Gamma_c$-stability of $\bar X$ one gets  $s_1\leq s_2$. Thus $X'$ is $\Gamma_{c'}$-stable by Lemma \ref{caractbblr}.	\end{proof}
		
		\subsection{The set-theoretic correspondence} \label{setcorr}
		Let $\operatorname{Rep}(\boldbar {\mathbf Q}^n)^{\heartsuit s}_{\mathbf v,\Theta}$ be the space of representations of the enhanced Hirzebruch quiver $\boldbar {\mathbf Q}^n$,
		with the relations of Table \ref{relenhHirz} and dimension vector $\mathbf v = (c, c, c-c', c-c',1)$, stable with  respect to a stability parameter $\Theta = (\theta_1,\theta_2,\theta_3,\theta_4)$ satisfying the conditions in equation \eqref{condTheta}.
This space is acted upon by a group $G$
$$ G = \GL(V_1)\times\GL(V_2)\times\GL(V_3)\times\GL(V_4) \simeq  \GL_c (\C)\times \GL_c (\C) \times \GL_{c-c'}  (\C) \times \GL_{c-c'}  (\C) $$
where $V_i$ is the vector space attached to the $i$-th vertex.
The action is, with reference to equation \eqref{reprenhHirz},
\begin{multline}\label{action}
	(g_1, g_2, g_3, g_4) (A_i, C_i, h_i,\ell, A_i'', C_i'', F_1, F_2)=\\ (g_2A_ig_1^{-1}, g_1C_ig_2^{-1}, g_1h_i,\ell g_1^{-1}, g_4A_i''g_3^{-1}, g_3C_i''g_4^{-1}, g_3F_1g_1^{-1}, g_4F_2g_2^{-1}).
\end{multline}
One can show that the stable representations are free points for this action (see
\cite{TesePedro} for a proof).

 		\begin{thm}\label{BLS}
		 There is a set-theoretical bijection between the moduli space of stable framed representations $\operatorname{Rep}(\boldbar {\mathbf Q}^n)^{\heartsuit s}_{\mathbf v,\Theta}/\!\!/G$ and the nested Hilbert scheme
$\Hilb^{c',c}(\Xi_n)$.
		\end{thm}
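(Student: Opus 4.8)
The plan is to build the bijection in both directions using the functor $\K_{n,\bnu}$ and the by-now-standard trick of viewing a representation of the enhanced quiver $\boldbar{\mathbf Q}^n$ as a surjective morphism between two representations of $\mathbf Q^n$. Given a $\Theta$-stable point $X\in\operatorname{Rep}(\boldbar{\mathbf Q}^n)^{\heartsuit s}_{\mathbf v,\Theta}$, Lemma \ref{carac} tells us $\bar X$ is a $\Gamma_c$-stable representation of $\mathbf Q^n$ and that $F_1,F_2$ are surjective, so $(F_1,F_2)$ is a surjective morphism of $\mathbf Q^n$-representations $\bar X\to X''$, where $X''$ has dimension vector $(0,c-c',c-c')$ (vertexes $0,1,\infty$); by Lemma \ref{rmkcarac} its kernel $X'$ is $\Gamma_{c'}$-stable. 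Choosing $\bnu$ generic enough that $\bar X\in\A_{n,\bnu}(c)^{\fr s}$ (and hence, by part 3 of Proposition \ref{propK2}, $X'\in\A_{n,\bnu}(c')^{\heartsuit s}$ as well), I would apply $\K_{n,\bnu}$ to the short exact sequence $0\to X'\to\bar X\to X''\to 0$. Part 4 of Proposition \ref{propK2} gives an exact sequence of complexes on $\Sigma_n$, and taking cohomology (using $\mathcal H^{-1}=0$ everywhere, $\mathcal H^1(\K_{n,\bnu}(\bar X))=0$ by $\Gamma_c$-stability, and $\mathcal H^1(\K_{n,\bnu}(X'))=0$ by $\Gamma_{c'}$-stability) yields $0\to\mathcal H^0(\K_{n,\bnu}(X''))\to\mathcal H^0(\K_{n,\bnu}(X'))\to\mathcal H^0(\K_{n,\bnu}(\bar X))\to 0$ — wait, the direction here must be examined carefully because $\K_{n,\bnu}$ is contravariant, so the roles of sub and quotient swap: I get $0\to\mathcal H^0(\K_{n,\bnu}(X''))\to\mathcal H^0(\K_{n,\bnu}(\bar X))\to\mathcal H^0(\K_{n,\bnu}(X'))\to\mathcal H^1(\K_{n,\bnu}(X''))\to 0$, and one must check $\mathcal H^1(\K_{n,\bnu}(X''))$ is a length-$(c-c')$ rank-$0$ sheaf supported off $\ell_\infty$ (the analogue of $\mathcal H^1(E_{\X'})$ in the $\P^2$ case). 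Setting $I=\mathcal H^0(\K_{n,\bnu}(\bar X))$ and $I_E=\mathcal H^0(\K_{n,\bnu}(X''))$ — a subsheaf of $I$ with $0$-dimensional quotient of length $c-c'$ supported away from $\ell_\infty$ — produces, via Theorem \ref{isofunctHirz}, a nested pair of $0$-cycles: the larger one of length $c$ from $\bar X$, the smaller of length $c'$ from $X'$, i.e. a point of $\Hilb^{c',c}(\Xi_n)$. I would also check this is independent of the auxiliary choice of $\bnu$ using part 2 of Proposition \ref{propK1}, and $G$-invariant by functoriality of $\K_{n,\bnu}$, so the assignment descends to a well-defined map on the GIT quotient.

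For the inverse map, I would start from a nested pair $Z'\subseteq Z$ in $\Xi_n$ with $\operatorname{length} Z=c$, $\operatorname{length} Z'=c'$, equivalently ideal sheaves $I_Z\subseteq I_{Z'}$ with $I_{Z'}/I_Z$ of length $c-c'$. By Theorem \ref{isofunctHirz}, $I_Z$ corresponds to a $\Gamma_c$-stable representation $\bar X$ of $\mathbf Q^n$ and $I_{Z'}$ to a $\Gamma_{c'}$-stable $X'$, with $I_Z\subseteq I_{Z'}$ giving (by faithfulness/fullness of $\K_{n,\bnu}$ on the relevant subcategories, or by reconstructing the linear data from the monads as in \cite{bblr}) a morphism $X'\to\bar X$ of $\mathbf Q^n$-representations; injectivity on the sheaf side translates into this being injective, and taking the cokernel $X''$ (dimension vector $(0,c-c',c-c')$) yields the surjection $F=(F_1,F_2)\colon\bar X\to X''$. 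The pair $(\bar X, F)$ assembles into a representation $X$ of $\boldbar{\mathbf Q}^n$ with the prescribed dimension vector, and Lemma \ref{carac} certifies it is $\Theta$-stable. One then checks that the two constructions are mutually inverse — essentially by tracing back each step and invoking Theorem \ref{isofunctHirz} twice (once for the ``big'' and once for the ``small'' cycle) plus the exactness of $\K_{n,\bnu}$.

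The main obstacle I anticipate is bookkeeping the contravariance of $\K_{n,\bnu}$ consistently: sub/quotient get interchanged, $\mathcal H^0$ versus $\mathcal H^1$ roles flip relative to the $\P^2$ discussion of Section \ref{projplane}, and one must make sure the cohomology sheaf that ends up carrying the ``difference'' $0$-cycle of length $c-c'$ really is $\mathcal H^1$ of the complex attached to the quotient $X''$ (with $\W=0$), that it is rank $0$, flat, supported off $\ell_\infty$, and of the right length — this is the analogue of the computation that $\mathcal H^0(E_{\X'})=0$ on the line at infinity, which in the present setting is exactly the content deferred to Lemma \ref{cohomdeg}. A secondary technical point is the dependence on $\bnu$: one must choose a single $\bnu$ that works simultaneously for $\bar X$, $X'$, and the morphism between them, and verify (via Proposition \ref{propK1}(2)) that the resulting nested pair does not depend on it; and one must confirm that the surjectivity of $F_1,F_2$ together with $\Gamma_c$-stability of $\bar X$ forces $X''$ to lie in the subcategory where Theorem \ref{isofunctHirz} applies to it as well. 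Everything else is a matter of unwinding definitions and re-using the $\P^2$ argument of Section \ref{projplane} almost verbatim.
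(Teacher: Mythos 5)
Your outline follows the paper's own proof essentially step by step: Lemma \ref{carac} and Lemma \ref{rmkcarac} produce the exact sequence $0\to X'\to \bar X\to X''\to 0$ of $\mathbf Q^n$-representations, the (contravariant) monad functor turns it into an exact sequence of complexes, the long exact cohomology sequence plus the vanishings from stability and Lemma \ref{cohomdeg} yield the nested pair, and the inverse is built by reversing the steps. The only differences from the paper are cosmetic: the paper first passes to ADHM linear data on a chart $U_m^{nc}$ (via the invertibility of $A_{2m}$ and the local functor $M$) and, for the inverse, invokes Theorem \ref{JvFthm} on $\C^2$ together with the explicit formulas (3.13) of \cite{bblr}, where you instead apply $\K_{n,\bnu}$ directly and appeal to fullness of the functor; both are legitimate.

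One bookkeeping slip should be corrected: you describe $\mathcal H^0(\K_{n,\bnu}(X''))$ as ``a subsheaf of $I$ with $0$-dimensional quotient of length $c-c'$.'' In fact that sheaf vanishes --- this is precisely what Lemma \ref{cohomdeg} gives, since $X''$ has $\W=0$ --- and its vanishing is what makes the map $\mathcal H^0(\K_{n,\bnu}(\bar X))\to\mathcal H^0(\K_{n,\bnu}(X'))$ injective. The nesting is therefore $\mathcal H^0(\K_{n,\bnu}(\bar X))\subset\mathcal H^0(\K_{n,\bnu}(X'))$ (ideal sheaves of the length-$c$ and length-$c'$ cycles respectively), with quotient $\mathcal H^1(\K_{n,\bnu}(X''))$ of length $c-c'$ supported off $\ell_\infty$. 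Your final sentence identifying which cycle comes from $\bar X$ and which from $X'$ is correct, so this is a local mislabeling rather than a gap in the argument.
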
 

  Before proving this result, we shall define a (contravariant) functorial correspondence $X \rightsquigarrow M(X)$ between representations of the ADHM quiver and complexes on $\Sigma_n$; this will be a local version of the functor $\K_n$.
  
  Let $V$, $W$ be vector spaces of dimension $c$ and 1, respectively, and let 
		  $(b_1, b_2, e)\in \mathrm{End}(V)^{\oplus 2}\oplus \Hom(V, W)$, with $[b_1,b_2]=0$. This may be regarded as 
		  a representation $X=(V,W,b_1,b_2,e,0)$ of the ADHM quiver. Taking inspiration from \cite[p. 2151]{bblr} we define the complex $M(X)$ on $\Sigma_n$ to be: 
		\begin{equation}
			M(X):\ \ 0\to\cO _{\Sigma_n}(0, -1) \otimes V^\ast \stackrel{\alpha}{\longrightarrow} \cO _{\Sigma_n}(1, -1)\otimes V^\ast\oplus \cO _{\Sigma_n} \otimes [V^\ast \oplus W^\ast] \stackrel{\beta}{\longrightarrow} \cO _{\Sigma_n}(1,0) \otimes V^\ast \to 0,
		\end{equation}
		where
		\begin{equation}
			\alpha=\left[ \begin{array}{c}
				\id_{V^\ast}(y_{2}^ns_{\mathfrak{e}}) +b^\ast_2s_{\infty}\\ \id_{V^\ast}y_{1}+b^\ast_1y_{2} \\ 0
			\end{array}\right]
		, \qquad  
			\beta=\left[\begin{array}{ccc}
				\id_{V^\ast}y_{1}+b^\ast_1y_{2}, & -\left( \id_{V^\ast}(y_{2}^ns_{\mathfrak{e}}) +b^\ast_2s_{\infty}\right),  & e^\ast s_{\infty} \end{array} \right]. 
		\end{equation}
		Moreover, given a morphism of ADHM representations $\varphi: X\to\tilde X $, where $\tilde X$ is a representation with the same  structure, we define the morphism between the complexes $M(\tilde X)$ and $M(X)$ to be
		\begin{equation}\label{diagmorphism}
			\xymatrix{
				0\ar[r]&\cO _{\Sigma_n}(0, -1)\otimes \tilde V^\ast  \ar[r]^-{\widetilde{\alpha}}\ar[d]^-{\varphi_\U }&\cO _{\Sigma_n}(1, -1)\otimes \tilde V^\ast\oplus\cO _{\Sigma_n}\otimes[\tilde V^\ast\oplus  W^\ast] \ar[r]^-{\widetilde{\beta}}\ar[r]^-\beta\ar[d]^{\varphi_\V }& \cO _{\Sigma_n}(1,0)\otimes \tilde V^\ast\ar[r]\ar[d]^{\varphi_\W }&0\\
				0\ar[r] &\cO _{\Sigma_n}(0, -1)\otimes V^\ast \ar[r]^-\alpha&\cO _{\Sigma_n}(1, -1)\otimes V^\ast\oplus\cO _{\Sigma_n}\otimes [V^\ast\oplus W^\ast]\ar[r]^-{\beta}& \cO _{\Sigma_n}(1,0)\otimes V^\ast \ar[r]&0
			}	
		\end{equation}
	where $\varphi_\U$  and $\varphi_\W$ are   the identity times the dual of the morphism $F\colon V\to\tilde V$ in $\varphi$. The morphism
	$\varphi_\V$ is defined as
	$$\varphi_{\V }=\left[ \begin{array}{ccc}
			\id \otimes F^\ast &0&0\\
			0&\id \otimes F^\ast &0\\
			0&0&\id 
		\end{array}\right].$$
	Note that the the entries below the diagonal are forced to be zero by the vanishing $H^0(\cO_{\Sigma_n}(-1,1))=0$.
	The check that this defines a morphism of complexes is done in detail in \cite{TesePedro}.

 In what follows, we shall consider monomorphisms in the category of framed representations of the ADHM quiver; the corresponding quotient representation will be framed to the zero vector space. The next lemma provides cohomological vanishings for the complexes on $\Sigma_n$ which arise from such ``degenerate'' representations through the correspondence $X \rightsquigarrow M(X)$ we described above (suitably adapted).

\begin{lemma}\label{cohomdeg}

Let $V$ be a vector space of dimension $c$, and $b_1,b_2$ two commuting elements in $\operatorname{End}(V)$. This may be regarded as a ``degenerate" representation $X=(V,\{0\},b_1,b_2,0,0)$ of the ADHM quiver.
			The complex $M(X)$ given by
			$$0\to\cO _{\Sigma_n}(0,-1) \otimes V^\ast \stackrel{\alpha}{\longrightarrow}( \cO _{\Sigma_n}(1,-1)  \oplus\cO _{\Sigma_n}) \otimes V^\ast \stackrel{\beta}{\longrightarrow}\cO _{\Sigma_n}(1,0) \otimes V^\ast\to 0,$$ 
			where
			\begin{equation}
				\alpha=\left[ \begin{array}{c}
					\id_{V^\ast}(y_{2}^ns_{\mathfrak{e}}) +b^\ast_{2}s_{\infty}\\ \id_{V^\ast}y_{1}+ b^\ast_{1}y_{2}
				\end{array}\right]
			, \quad 
				\beta=\left[\begin{array}{ccc}
					\id_{V^\ast}y_{1}+ b^\ast_{1}y_{2}, & -\left( \id_{V^\ast}(y_{2}^ns_{\mathfrak{e}}) +b^\ast_{2}s_{\infty}\right) \end{array} \right],
			\end{equation}
			satisfies  $$\mathcal{H}^{-1}(M(X))=\mathcal{H}^{0}(M(X))=0.$$
		\end{lemma}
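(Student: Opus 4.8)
The statement is a cohomological vanishing for the two-term truncation of the complex $M(X)$ attached to a pair of commuting operators $b_1,b_2$ on a $c$-dimensional space, with no framing data. The vanishing $\mathcal H^{-1}=0$ is the injectivity of $\alpha$, and $\mathcal H^0=0$ says $\ker\beta=\operatorname{im}\alpha$. The cleanest route is to work fiberwise over $\Sigma_n$, i.e.\ to check the two facts on stalks (equivalently, after restricting to each point) using the explicit local trivializations of the line bundles $\cO_{\Sigma_n}(0,-1)$, $\cO_{\Sigma_n}(1,-1)$, $\cO_{\Sigma_n}(1,0)$ and the sections $y_1,y_2,s_{\mathfrak e},s_\infty$. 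Since all three terms are locally free, injectivity of $\alpha$ as a sheaf map follows from injectivity of $\alpha$ on a dense open set (it is automatically injective away from the zero locus of its ``leading coefficient''), so the real content is the exactness in the middle.

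\textbf{Step 1 (reduce to affine charts).} Cover $\Sigma_n$ by the standard charts coming from the $\P^1\times\P^2$ description; on each chart the sections $y_1,y_2$ become a coordinate on the base $\P^1$ and $s_{\mathfrak e},s_\infty$ become (up to units) polynomial expressions in that coordinate and the fiber coordinate. The key observation is that $s_\infty$ vanishes exactly on $\ell_\infty$, so on the chart containing $\ell_\infty$ we can write things in terms of one affine coordinate where $s_\infty$ is a unit times a power of the fiber coordinate, while $y_1$ (say) is a unit; and on the complementary chart $s_\infty$ is invertible. This lets me turn $\alpha$, $\beta$ into honest matrices over a polynomial (or localized polynomial) ring.

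\textbf{Step 2 (linear algebra over the coordinate ring).} After the reduction, the claim becomes: the two-term complex of free modules given by the displayed $\alpha,\beta$, with $b_1,b_2$ commuting, is exact at the middle term and has injective $\alpha$. Here one uses the commutation $[b_1,b_2]=0$, which is precisely what makes $\beta\alpha=0$, and one computes the cohomology by the same bookkeeping already referenced in the text: the footnote after the proof of the natural transformation on $\P^2$ promises ``this computation will be made in the proof of Lemma \ref{cohomdeg}''. Concretely, restricting to $T\times\ell_\infty$ (the locus $s_\infty=0$), $\alpha$ and $\beta$ degenerate to $\begin{pmatrix}\id\, y_2^n s_{\mathfrak e}\\ \id\, y_1+b_1^\ast y_2\end{pmatrix}$-type and $\begin{pmatrix}\id\, y_1+b_1^\ast y_2,& -\id\, y_2^n s_{\mathfrak e}\end{pmatrix}$-type maps, whose image/kernel coincide by a direct Koszul-style computation in the two variables $y_1,y_2$; off $\ell_\infty$ one inverts $s_\infty$ and the complex becomes a twisted Koszul complex on the $b_1,b_2$ action, again exact by commutativity. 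Patching the local computations (the cohomology sheaf is coherent, vanishes on a dense open, and being a subsheaf of a locally free sheaf it is torsion-free, hence zero) finishes it.

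\textbf{Main obstacle.} The genuine difficulty is organizing the chart computations so that the section $s_{\mathfrak e}$ (which encodes the twist $\cO_{\Sigma_n}(0,n)\hookrightarrow\cO_{\Sigma_n}(1,0)$) is handled correctly: one must keep track of which combinations of $y_1,y_2,s_{\mathfrak e},s_\infty$ are units on each chart, since $y_2^n s_{\mathfrak e}$ and $s_\infty$ together span $H^0(\cO_{\Sigma_n}(1,0))$ only after this twist. Once the right local coordinates are fixed the exactness is a finite, if slightly tedious, Gaussian elimination using $[b_1,b_2]=0$; the bookkeeping, not any conceptual point, is where care is needed, and I would carry it out chart by chart, reusing as much as possible the $\P^2$ argument that $\operatorname{im}\alpha'=\ker\beta'$ on the line at infinity.
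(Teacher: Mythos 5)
Your treatment along $\ell_\infty$ is essentially the paper's: one restricts to $\ell_\infty$ (so $s_\infty=0$), notes that $s_{\mathfrak e}$ has no zeros there (since $\mathfrak e\cdot\mathfrak h=0$), and covers $\ell_\infty$ by the locus $y_2\neq 0$, where $y_2^n s_{\mathfrak e}$ is invertible and one solves $\alpha(v)=(v_1,v_2)$ by $v=v_1/(y_2^ns_{\mathfrak e})$, and a neighbourhood of $y_2=0$ inside $y_1\neq 0$, where $L=y_1\,\id+y_2\,b_1^\ast$ is invertible and one sets $v=L^{-1}v_2$; the injectivity of $\alpha$ is quoted from \cite{bblr}. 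Up to that point your proposal and the paper coincide.

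Two of your justifications, however, do not hold as stated. First, off $\ell_\infty$ the complex is not exact in the middle ``by commutativity'': commutativity of $b_1,b_2$ only gives $\beta\alpha=0$, and a Koszul-type complex $V\to V\oplus V\to V$ built from two commuting endomorphisms is in general not exact in the middle (take $b_1=b_2=0$ over a point: the middle cohomology is $V\oplus V$). What actually makes the chart computation work is that, after trivializing on an affine chart of $\Xi_n$, the two entries become $z\,\id+b_1^\ast$ and $w\,\id+b_2^\ast$ with $z,w$ independent affine coordinates; these form a regular sequence on $\C[z,w]\otimes V^\ast$ (a leading-coefficient argument in $z$, then in $w$ on the quotient), and it is this regularity, combined with commutativity, that yields $\ker\beta=\operatorname{im}\alpha$ on the chart. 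This point must be made explicit; ``Gaussian elimination'' does not isolate it. Second, your patching step is incorrect: $\mathcal H^{0}(M(X))=\ker\beta/\operatorname{im}\alpha$ is a quotient, not a subsheaf of a locally free sheaf, so it has no a priori torsion-freeness, and ``vanishes on a dense open, hence zero'' does not apply to it. If your chart computations genuinely cover all of $\Sigma_n$ this step is redundant; as a safety net it fails. Compare the $\P^2$ discussion in Section 3, where the residual torsion part of the analogous $\mathcal H^0(E_{\X'})$ is killed by injecting it into the torsion-free $\mathcal H^0(E_{\X})$ coming from the ambient stable family, an input that is external to the present lemma.
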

		\begin{proof} First we show that $\operatorname{Im} \alpha =  \ker \beta$; of course we only have to check that $\operatorname{Im} \alpha\supset  \ker \beta$. The restrictions of $y_{1}$, $y_{2}$ to $\ell_\infty$ may be regarded as homogeneous coordinates on $\ell_\infty$;\footnote{Note that $(y_{1},y_{2})$ are sections of $\cO _{\Sigma_n}(0,1)$, which restricted to $\ell_\infty\simeq \mathbb{P}^1$ is $\cO _{\mathbb{P}^1}(1)$ as $\mathfrak h \cdot \mathfrak f =1 $.} by abuse of notation we shall denote them again by $y_1$, $y_2$.
			Moreover, notice that the section $s_{\mathfrak e}$ has no zeroes on $\ell_\infty$ (actually
			$\cO _{\Sigma_n}(1,-n)\Big|_{\ell_\infty}$ is trivial as $\mathfrak e\cdot\mathfrak h=0$). 
			Omitting to write the restriction to $ \ell_\infty$, we have
			$$\alpha =  \begin{pmatrix} \id_{V^\ast}(y_{2}^ns_{\mathfrak{e}})\\ \id_{V^\ast}y_{1}+ b^\ast_{1}y_{2}\end{pmatrix}$$
			and 
			$$\beta = \begin{pmatrix} -(\id_{V^\ast}y_{1}+b^\ast_{1}y_{2}), \  \id_{V^\ast}(y_{2}^ns_{\mathfrak{e}})\end{pmatrix}.
			$$
			So $(v_1,v_2) \in \ker \beta$ if and only if 
			\begin{equation} \label{ker}
				(y_{1} + y_{2} \,  b^\ast_{1})v_1 = y_{2}^n s_{\mathfrak e}  v_2. \end{equation}
			
			If $y_{2}\neq 0$, let $(v_1,v_2)$ satisfy \eqref{ker}, and set
			$$ v= \dfrac{v_1}{y_{2}^n s_{\mathfrak e}}.$$
			Then, taking \eqref{ker} into account, one has  $\alpha(v) = (v_1,v_2)$. 
			
			In the patch $y_1\ne 0$ the morphism $L = y_1   + y_2 \, b_{1}^\ast $ is invertible at $y_2=0$, hence it is invertible in a neighborhood of that point.
			Then setting $v  = L^{-1} v_2$ we again have  $\alpha(v) = (v_1,v_2)$ in that neighborhood. As this neighborhood and the neighborhood $y_2\ne 0$ cover $\ell_\infty$ the claim follows.
			
			Moreover, $\mathcal{H}^{-1}(M(X))=\ker(\alpha)$ and $\alpha$ is injective by \cite[Statement (i), p. 2151]{bblr}. This finalizes the proof.
		\end{proof}

  We start now the proof of Theorem \ref{BLS}. At first we shall describe some constructions that will allow us to build a nested 0-cycle  out of a representation of the enhanced Hirzebruch quiver. We start by considering an element $X$ in  $\operatorname{Rep}(\boldbar {\mathbf Q}^n)^{\heartsuit s}_{\mathbf v,\Theta}$ as in equation \eqref{reprenhHirz}. 
		By Lemma \ref{carac} the maps $F_1$ and $F_2$ are surjective. Moreover, $X$ includes the data for a representation of the Hirzebruch quiver
		${\mathbf Q}^n$, corresponding to the left side of enhanced Hirzebruch quiver $\boldbar {\mathbf Q}^n$ (see Figure \ref{figenhHirz}), which turns out to be stable.
		 The maps $F_1$ and $F_2$ can be seen as a morphism of representations of the quiver ${\mathbf Q}^n$ and therefore   Lemma \ref{rmkcarac}   yields a short exact sequence in the category of representations of the quiver ${\mathbf Q}^n$:
		\begin{eqnarray*}
			0\to X_{c'}\stackrel{i}{\longrightarrow}X_c\stackrel{F}{\longrightarrow}X_{c-c'}\to 0,
		\end{eqnarray*}
		where $X_{c'}$ is $\Gamma_{c'}$-stable, $X_{c}$ is $\Gamma_c$-stable and the maps $i$ and $F$ are  		
		$$i=(i_1, i_2, \mathrm{Id}_W), \qquad  F=(F_1, F_2, 0 ).$$

As $X_{c}$ is $\Gamma_c$-stable, we know from  \cite[Prop.~4.9]{bblr} and   \cite[Thm.~3.8]{bblr-irr} that the   pencil $A_\bnu$ is regular, and therefore there exists an $m$ in the range $0,\dots,c$ such that the map 
\begin{equation}\label{A2}A_{2m}:=s_mA_1+c_mA_2\quad \text{where}\quad s_m = \sin \tfrac{m\pi}{c+1}, \quad c_m = \cos \tfrac{m\pi}{c+1}
		\end{equation} 
		is invertible. We fix such an $m$ and notice that also the map
		\begin{equation}
			A'_{2m}:=s_mA'_1+c_mA'_2=s_mA_1|_{\ker(F_1)}+c_mA_2|_{\ker(F_1)}=A_{2m}|_{\ker(F_1)}
		\end{equation}
		 is invertible. By  varying $m$ the open sets defined by the invertibility of $A_{2m}$ cover the space of stable linear data; on the geometric side, this corresponds to   an open cover $\left\{U_m^{nc}\right\}$, $m=0, \ldots, c$, of $\Hilb^{c}(\Xi_n)$ where each open   is isomorphic to $\Hilb^{c}(\C^2)$; this $\C^2$ is $\Sigma_n$ deprived of the line at infinity and the fiber over the point $[c_m,s_m]$ of $\P^1$.   

		 	Using \cite[Proposition 3.3]{bblr} we can build a short exact sequence in the category of representations of the   ADHM quiver
		\begin{equation}
			0\to Y_{c'}\to Y_c\to Y_{c-c'}\to 0.
		\end{equation}
		where $Y_{c'}$ and $Y_{c}$ are co-stable in Nakajima's sense. So we can picture a diagram 
		\begin{equation}\label{seqexataprov}
			\begin{tikzpicture} 
				\node at (-2,0) {$0$}; \draw[->,thick] (-1.6,0) to (-0.4,0); \node at (0,0) {$\bigcirc$}  edge [thick, out=95, in=155, loop] () edge [thick, out=85, in=25, loop]  () ; \node at (0,0) {{\scriptsize $c'$}};\node at (0,-2) {$\bigcirc$};\node at (-2,-2) {$0$}; \draw[->,thick] (-1.6,-2) to (-0.4,-2); \node at (0,-2) {{\scriptsize $1$}};  \node at (-1, 0.85) {{\scriptsize $b'_{1m}$}}; \node at (1, 0.85){{\scriptsize $b'_{2m}$}};  
				\draw[->,thick,bend right] (-0.2,-0.2) to (-0.2,-1.8); \node at (-0.6, -1) {{\scriptsize $\ell'$}};\node at (3,0) {{\scriptsize $c$}};
				\node at (3,0) {$\bigcirc$} edge [thick, out=95, in=155,loop] () edge [thick, out=85, in=25, loop] ();\node at (3,-2) {$\bigcirc$}; \node at (3,-2) {{\scriptsize $1$}};  \node at (2, 0.85) {{\scriptsize $b_{1m}$}}; \node at (4, 0.85){{\scriptsize $b_{2m}$}};  
				\draw[->,thick,bend right]  (2.8,-0.2) to (2.8,-1.8);
				\draw[->,thick] (0.4,0) to (2.6,0); \draw[->,thick] (0.4,-2) to (2.6,-2); \node at (1.5, -1.7) {{\scriptsize $\id$}};
				\node at (1.5,0.3) {{\scriptsize $i_1$}};  \node at (2.4,-1) {{\scriptsize $\ell$}};\node at (6,0) {{\scriptsize $s$}};
				\node at (6,0) {$\bigcirc$} edge [thick, out=95, in=155, loop] () edge [thick, out=85, in=25, loop]  ();  \node at (5, 0.85){{\scriptsize $\bar b_{1m}$}}; \node at (7, 0.85){{\scriptsize $\bar b_{2m}$}}; \draw[->,thick] (6.4,0) to (7.6,0); 
				\draw[->,thick] (3.4,0) to (5.4,0); \node at (4.4, 0.2) {{\scriptsize $F_1$}}; \node at (8,0) {$0$};
				\draw[dashed] (1.7,1.5) to (1.7,-2.5); \draw[dashed] (1.7,1.5) to (7.3,1.5);  \draw[dashed] (7.3,-2.5) to (7.3,1.5); 
				 \draw[dashed] (7.3,-2.5)  to (1.7,-2.5);
			\end{tikzpicture}
		\end{equation}
		where $s=c-c'$ and  $\bar b_{1m}$ and $\bar b_{2m}$ are the quotient maps,     which satisfy $[\bar b_{1m}, \bar b_{2m}]=0$ as  $[b_{1m},b_{2m}]=0.$
		Note that the portion of the diagram within dashed lines is the  dual  enhanced ADHM quiver.

		By the previous construction  and   by Proposition \ref{Prop14} (4) we get a short exact sequence
		$$0\to M(Y_{c-c'})\to M(Y_c)\to M(Y_{c'})\to 0 $$
		which is the following diagram with exact rows:
		\begin{center}
			\scalebox{0.7}{
				\begin{xy}
					\xymatrix{
						&0\ar[d]&0\ar[d]&0\ar[d]&\\
						0\ar[r]&\cO _{\Sigma_n}(0, -1) \otimes V_3^\ast  \ar[r]^{\varphi_\U '}\ar[d]^{\alpha''}&\cO _{\Sigma_n}(0, -1) \otimes V_1^\ast \ar[r]^{\varphi_\U }\ar[d]^{\alpha}&\cO _{\Sigma_n}(0, -1)\otimes N^\ast \ar[d]^{\alpha'}\ar[r]&0\\
						0\ar[r]&[\cO _{\Sigma_n}(1,-1) \oplus\cO _{\Sigma_n}]  \otimes V_3^\ast  \ar[d]^{\beta''}\ar[r]^{\varphi_\V '\ \ \ \ \ \ \ }&\cO _{\Sigma_n}(1,-1) \otimes V_1^\ast \oplus\cO _{\Sigma_n} \otimes [V_1^\ast \oplus W^\ast] \ar[d]^{\beta}\ar[r]^{\varphi_\V \ \ }&\cO _{\Sigma_n}(1,-1)\otimes N^\ast \oplus\cO _{\Sigma_n} \otimes [N^\ast\oplus W^\ast] \ar[d]^{\beta'}\ar[r]&0\\
						0\ar[r]&\cO _{\Sigma_n}(1,0) \otimes V_3^\ast \ar[d]\ar[r]^{\varphi_\W '}&\cO _{\Sigma_n}(1,0) \otimes V_1^\ast\ar[d]\ar[r]^{\varphi_\W }&\cO _{\Sigma_n}(1,0)\otimes N^\ast \ar[d]\ar[r]&0\\
						&0&0&0&}
				\end{xy}
			}
		\end{center}
		where $N = \ker F_1 \colon V_1 \to V_3$. 
  The morphisms $\varphi'$ are defined in analogy with the morphisms $\varphi$.
		The corresponding long exact sequence of cohomology contains the segment 
		\begin{equation} \label{longex}
			\mathcal{H}^{0}(M(Y_{c-c'}))\to \mathcal{H}^{0}(M(Y_c))\to \mathcal{H}^{0}(M(Y_{c'}))\to \mathcal{H}^{1}(M(Y_{c-c'}))\to \mathcal{H}^{1}(M(Y_c)).
		\end{equation}
		However since  ${X_{c}}$ is co-stable we have 
		$\mathcal{H}^1(M(Y_c))=0$
		and by  Lemma \ref{cohomdeg}, $\mathcal{H}^{0}(M(Y_{c-c'}))$ is also zero, so that the sequence \eqref{longex} reduces to 
		\begin{equation}\label{curtafinal}
			0\to \mathcal{H}^0(M(Y_c))\to \mathcal{H}^{0}(M(Y_{c'}))\to \mathcal{H}^{1}(M(Y_{c-c'}))\to0
		\end{equation}
		The sheaves $E=\mathcal{H}^0(M(Y_c))$ and $F=\mathcal{H}^0(M(Y_{c'}))$ are rank $1$ framed torsion-free sheaves with Chern character $(1, 0, -c)$ and $(1, 0, -c')$, respectively. Moreover, 
		\begin{equation}
			\mathrm{coker}(\beta'')=\mathcal{H}^{1}(M(Y_{c-c'}))\simeq \dfrac{\mathcal{H}^0(M(Y_c))}{\mathcal{H}^0(M(Y_{c'}))}
		\end{equation}
		is a rank $0$ sheaf of length $c-c'$ supported away from $\ell_\infty$, since the ranks of $\mathcal{H}^0(M(Y_c))$ and $\mathcal{H}^0(M(Y_{c'}))$   coincide and the two sheaves are framed on $\ell_\infty$.
		Thus, we get a framed flag of sheaves  $(E, F, \varphi)$ on $\Sigma_n$, which is actually a pair of nested 0-cycles. Note that due to Proposition \ref{Prop14} (2), if we choose a different value of $m$, we obtain the same nested 0-cycles.
  
		Now  we   build the correspondence in the opposite direction, starting from  $Z\in\Hilb^{c',c}(\Xi_n)$. One can write $Z=(Z^{c'}, Z^c)$, where $Z^{c'}$ and $Z^{c}$ are $0$-cycles of length $c'$ and $c$, respectively.
  $Z^c$  for some value of $m$ belongs to an open set of the cover   $\left\{U_m^{nc}\right\}$ of $\Hilb^{c}(\Xi_n)$ introduced earlier, and $Z^{c'}$ belongs to the open set $U_m^{nc'}$ of an analogous open cover of 
 $\Hilb^{c'}(\Xi_n)$ (having $c+1$ elements as well). In this way
we obtain an element $\widetilde{Z}\in \Hilb^{c',c}(\mathbb{C}^2)$. By Theorem \ref{JvFthm}, there is a stable representation of the dual enhanced ADHM quiver with dimension vector $(s=c-c', c, 1)$ providing the portion of diagram \eqref{seqexataprov} within the dashed lines.  The diagram then is completed by taking kernels (this is analogous to the operation of taking quotients in Section \ref{projplane}, in view of the fact that here we are   working with the dual quivers). Thus obtaining the left-hand side of \eqref{seqexataprov}.

		One can rewrite this as en exact sequence of representations of the dual ADHM quiver  
		\begin{equation}
			0\to L\stackrel{i_1}{\longrightarrow} M\stackrel{F_1}{\longrightarrow} N\to 0, 
		\end{equation}
		where $L$ and $M$ are stable and $\dim L =(c', 1)$,  $\dim M=(c, 1)$, $\dim N = (c-c',0)$.  One may use \cite[Equations 3.13]{bblr}, which in this case read
		\begin{eqnarray}
			A_1&=&c_m b_{1m}+s_m\id_{V}, \quad
			A_2 = -s_m b_{1m}+c_m\id_{V},\\[3pt]
			\bar A{_1}&=&c_m\bar b_{1m}+s_m\id_{V''}, \quad
			\bar A{_2}=-s_m\bar b_{1m}+c_m\id_{V''}\\[5pt]
			\left[\begin{array}{c} C_1\\ \vdots \\ \vdots \\ C_n \end{array}\right]&=& (\sigma_m^{n-1}\otimes  \id_V)\left[\begin{array}{c}
				\id_V\\b_{1m}\\ \vdots\\ b_{1m}^{n-1}\end{array}\right]b_{2m},  \quad 
			\left[\begin{array}{c}\bar C{_1}\\ \vdots \\ \vdots \\ \bar C{_n} \end{array}\right] = (\sigma_m^{n-1}\otimes \id_{V''})\left[\begin{array}{c}\id_{V''}\\ \bar b_{1m}\\ \vdots\\ \left(\bar b_{1m}\right)^{n-1}\end{array}\right]\bar b_{2m},
		\end{eqnarray}
		to obtain a representation $X$ of the quiver $\boldbar {\mathbf Q}^n$ with dimension vector $(c, c,c-c',c-c', 1)$. This representation is depicted in Figure \ref{otherenhHirz}; the horizontal arrows are copies of the morphism $F_1$ in equation \eqref{seqexataprov}. The matrix $\sigma_m^{n-1}$ is defined by the condition
		$$ (s_m z_1+c_mz_2)^p (c_mz_1-s_mz_2)^{n-1-p} = \sum_{q=0}^{n-1} (\sigma_m^{n-1})_{pq} z_1^{n-1-q} z_2^q $$
		for $(z_1,z_2)\in \C^2$, where   $s_m$, $c_m$ are the numbers defined in equation \eqref{A2}.

	\begin{figure}\begin{center}
		 \begin{equation}
		 	\xymatrix@R-2.3em{
		 		&\bullet\ar@/^0ex/[rrr]^{F_1}\ar@/^6ex/[lddddddddddddddddddddd]^{C_{n}}\ar@{..}@/^5ex/[lddddddddddddddddddddd]\ar@/^4ex/[lddddddddddddddddddddd]_{C_{1}}&&&\bullet \ar@/^6ex/[lddddddddddddddddddddd]^{\bar C{_{n}}}\ar@{..}@/^5ex/[lddddddddddddddddddddd]\ar@/^4ex/[lddddddddddddddddddddd]_{\bar C{_{1}}}
		 		\\ \\ \\ \\ \\ \\ \\ \\ \\ \\ \\ \\ \\ \\ \\ \\ \\ \\ \\ \\ \\
		 		\bullet\ar@/^0ex/[rrr]^{F_1}\ar@/^6ex/[ruuuuuuuuuuuuuuuuuuuuu]^{A_{2}}\ar@/^4ex/[ruuuuuuuuuuuuuuuuuuuuu]_{A_{1}}\ar@/_3ex/[ddddddddddddddddddd]_{\ell}&&&\bullet \ar@/^6ex/[ruuuuuuuuuuuuuuuuuuuuu]^{\bar A{_{2}}}\ar@/^4ex/[ruuuuuuuuuuuuuuuuuuuuu]_{\bar A{_{1}}}&\\ \\ \\ \\ \\ \\ \\ \\ \\ \\ \\ \\ \\ \\ \\ \\ \\ \\ \\
		 		\bullet&&&& 
		 	}
		 \end{equation}  \caption{\label{otherenhHirz} \small The representation of the enhanced Hirzebruch quiver $\boldbar{\mathbf Q}^n$ constructed in Section \ref{setcorr}.} \end{center} \end{figure}
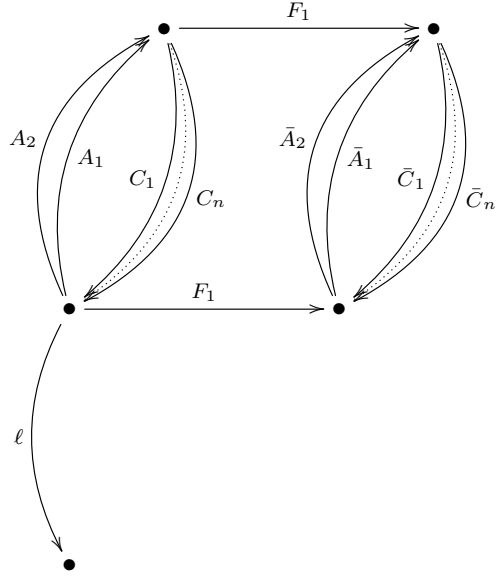

				This representation 	satisfies   the   relations in Table \ref{relenhHirz}, as one can directly check,
and is stable. In fact, the map $F_1$ is surjective and the representation of  the Hirzebruch quiver 
${\mathbf Q}^n$ given by the left portion of the quiver in Figure \ref{otherenhHirz} is stable  (see \cite{bblr} p.~2141). Then, the stability of $X$ follows from Lemma \ref{carac} and we get a point in $\operatorname{Rep}(\boldbar {\mathbf Q}^n)^{\heartsuit s}_{\mathbf v,\Theta}$. This in general depends on the choice of the open set in the cover $\left\{U_m^{nc}\right\}$ of
$\operatorname{Hilb}^c(\Xi_n)$, but its class in the quotient under the action of the group $G$ as in Theorem \ref{BLS} is actually independent of it. This construction inverts the previous one up to the action of the group $G$. The composition of the two constructions in the reverse order is the identity. This concludes the proof  of Theorem \ref{BLS}.

		\subsection{The schematic isomorphism} \label{schemiso}
		We complete the proof of Theorem \ref{mainresult} by showing that   $\mathcal M^{n\heartsuit s}_{\mathbf v,\Theta}$ and $\Hilb^{c',c}(\Xi_n)$  are isomorphic as schemes.  This will be accomplished along the lines of the proof of Theorem \ref{JvFthm} by showing that the following two functors are isomorphic:
		\begin{itemize}\itemsep=-2pt
		\item the functor $\bar{\mathfrak{R}}_{\mathbf v, \Theta }^{n\heartsuit s}$  of families of framed  representations of the enhanced Hirzebruch quiver $\boldbar {\mathbf Q}^n$ with dimension vector $\mathbf v=(c, c, c-c', c-c',1)$,  stable with respect to a stability parameter $\Theta$ as in Theorem \ref{mainresult};
		\item the functor $\mathfrak{Hilb}^{c',c}_{\Xi_n}$ of families of nested 0-cycles of length $(c',c)$ on $\Xi_n$. This is the functor $ \mbox{\bf Sch}^{\mbox{\footnotesize\rm op}}  \to   \mbox{\bf Set}$ that with every scheme $T$ associates the set  of isomorphism classes of pairs $(Z',Z)$, where $Z'$ and $Z$ are closed subschemes of $T\times \Xi_n$, flat over $T$, such that $Z'\subset Z$ and the restrictions of the projection $T\times \Xi_n \to T$ to $Z'$ and $Z$  are finite of degree $c'$ and $c$, respectively. This functor is represented by a scheme $\Hilb^{c',c}(\Xi_n)$		 by the results in \cite{Keel} (see \cite{Sernesi}, Section 4.5.1 for a detailed study of this result);  moreover, it is naturally isomorphic to the  functor   $\mathfrak{F}_{1, 0, c', c-c'}^{\Sigma_n,\ell_\infty}$ (using the notation of equation \eqref{flagfunct}) and therefore it is representable also by Theorem \ref{flagrepr}.
\end{itemize}

\begin{prop} There is a natural transformation $\eta_n \colon \bar{\mathfrak{R}}_{\mathbf v, \Theta }^{n\heartsuit s} \to \mathfrak{Hilb}^{c',c}_{\Xi_n}$ which is an isomorphism of functors.
\end{prop}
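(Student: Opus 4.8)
The plan is to promote the set-theoretic bijection of Theorem~\ref{BLS} to an isomorphism of functors, following the blueprint of the proof of Theorem~\ref{JvFthm} for $\P^2$ and of the categorified statement Theorem~\ref{isofunctHirz}: one defines $\eta_n$ on families by carrying out, in families, the constructions of Section~\ref{setcorr}, then builds an inverse natural transformation by running them backwards, and finally observes that all the steps involved are functorial, so that the two composites are the identities.

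\textbf{Construction of $\eta_n(T)$.} Let $\mathcal X=(T,\V_1,\V_2,\V_3,\V_4,\W,A_1,A_2,C_1,\dots,C_n,A_1',A_2',C_1',\dots,C_n',\ell,h_1,\dots,h_{n-1},F_1,F_2)$ be a family of $\Theta$-stable framed representations of $\boldbar{\mathbf Q}^n$ over $T$, with $\W=W\otimes\cO_T$. Since stability of a family is a fiberwise condition, Lemma~\ref{carac} applied at each closed point shows that $F_1$ and $F_2$ are fiberwise surjective, hence surjective morphisms of vector bundles with locally free kernels of rank $c'$; moreover the data attached to the vertices $1,2,\infty$ form a family $\mathcal X_c$ of $\Gamma_c$-stable representations of the Hirzebruch quiver $\mathbf Q^n$, those attached to $3,4$ a family $\mathcal X_{c-c'}$ with first component $0$, and $(F_1,F_2,0)$ is a surjection $\mathcal X_c\to\mathcal X_{c-c'}$ in $\A_n$. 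Applying the construction of Lemma~\ref{rmkcarac} fiberwise, its kernel $\mathcal X_{c'}$ is a family of $\Gamma_{c'}$-stable representations, so we obtain a short exact sequence $0\to\mathcal X_{c'}\to\mathcal X_c\to\mathcal X_{c-c'}\to 0$ in $\A_n$. Because $\mathcal X_c$ is fiberwise $\Gamma_c$-stable, the pencil $A_{\bnu}$ is fiberwise regular; hence $T$ is covered by opens $T_m$, $m=0,\dots,c$, on which $A_{2m}=s_mA_1+c_mA_2$ is an isomorphism, and over each $T_m$, after the change of linear data of \cite[Eqs.~3.13]{bblr}, the three families above lie in $\A_{n,\bnu}$ for a suitable $\bnu$, with $\mathcal X_c$ and $\mathcal X_{c'}$ being $\Gamma_c$-stable, resp.\ $\Gamma_{c'}$-stable. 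We then apply the functor $\K_{n,\bnu}$ (equivalently, the family version of the complex $M(-)$ of Section~\ref{setcorr}); by Proposition~\ref{propK1}(4) we get over $T_m$ a short exact sequence of complexes of coherent sheaves on $T_m\times\Sigma_n$. In its long exact cohomology sequence, $\mathcal H^1(\K_{n,\bnu}(\mathcal X_c))=0$ by Proposition~\ref{propK1} ($\Gamma_c$-stability), while the computation of Lemma~\ref{cohomdeg} shows $\mathcal H^{-1}(\K_{n,\bnu}(\mathcal X_{c-c'}))=0$ and that $\mathcal H^{0}(\K_{n,\bnu}(\mathcal X_{c-c'}))$ vanishes on $T_m\times\ell_\infty$, hence vanishes, since it injects into the torsion-free sheaf $\mathcal H^{0}(\K_{n,\bnu}(\mathcal X_c))$ (this is the exact analogue of the argument at the end of Section~\ref{projplane}). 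The sequence therefore collapses to
$$0\to \mathcal H^0(\K_{n,\bnu}(\mathcal X_c))\to \mathcal H^0(\K_{n,\bnu}(\mathcal X_{c'}))\to \mathcal H^1(\K_{n,\bnu}(\mathcal X_{c-c'}))\to 0 .$$
By Proposition~\ref{propK1}(1) the first two terms, call them $E$ and $F$, are flat over $T_m$; they are families of rank-$1$ framed torsion-free sheaves on $\Sigma_n$ with the invariants of $\operatorname{Hilb}^c(\Xi_n)$, resp.\ $\operatorname{Hilb}^{c'}(\Xi_n)$, and $F/E\simeq\mathcal H^1(\K_{n,\bnu}(\mathcal X_{c-c'}))$ is flat over $T_m$ (a quotient of flat sheaves, or directly by the argument of Proposition~\ref{propflat}), of fiberwise length $c-c'$ and supported away from $T_m\times\ell_\infty$. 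Thus $(E\subset F,\varphi)$ is a flat family of framed flags on $(\Sigma_n,\ell_\infty)$ over $T_m$, i.e.\ an element of $\mathfrak F^{\Sigma_n,\ell_\infty}_{1,0,c',c-c'}(T_m)\simeq\mathfrak{Hilb}^{c',c}_{\Xi_n}(T_m)$.

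\textbf{Gluing and naturality.} Over $T_m\cap T_{m'}$ both $\mathcal X_c$ and $\mathcal X_{c'}$ lie in $\A_{n,\bnu}\cap\A_{n,\bnu'}$ for the two values of $\bnu$, so Proposition~\ref{propK1}(2) gives quasi-isomorphisms between the two families of complexes; being isomorphisms on $\mathcal H^0$ and compatible with the framings, they canonically identify the sheaves $E$ and $F$ built from $m$ with those built from $m'$, so the local flat families of framed flags glue to a global $(E\subset F,\varphi)$ over $T$. This defines $\eta_n(T)(\mathcal X)\in\mathfrak{Hilb}^{c',c}_{\Xi_n}(T)$. Compatibility with pullback along a scheme morphism $f\colon T\to S$ holds because the formation of $\ker F_1$, $\ker F_2$, of the complex $\K_{n,\bnu}(-)$ and of its cohomology sheaves commutes with flat base change and, the output being $S$-flat, with arbitrary base change; hence $\eta_n$ is a natural transformation.

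\textbf{Inverse and conclusion.} The inverse is built by reversing these steps, exactly as the set-theoretic inverse in Section~\ref{setcorr}. Given a family of nested $0$-cycles on $\Xi_n$ over $T$, equivalently (via the natural isomorphism $\mathfrak{Hilb}^{c',c}_{\Xi_n}\simeq\mathfrak F^{\Sigma_n,\ell_\infty}_{1,0,c',c-c'}$ recalled above, which is representable by Theorem~\ref{flagrepr}) a family of framed flags $(E\subset F,\varphi)$ on $(\Sigma_n,\ell_\infty)$ of type $(1,0,c',c-c')$, one works over the cover of $T$ pulled back from the cover $\{U^{nc}_m\}\times\{U^{nc'}_m\}$ of $\operatorname{Hilb}^c(\Xi_n)\times\operatorname{Hilb}^{c'}(\Xi_n)$, on which the family restricts to a family of nested $0$-cycles on $\C^2$; Theorem~\ref{JvFthm} (rank $1$, in enhanced ADHM form, on this $\C^2$-chart) supplies a family of stable representations of the dual enhanced ADHM quiver, i.e.\ the dashed part of diagram~\eqref{seqexataprov}, which one completes by taking kernels, and then \cite[Eqs.~3.13]{bblr} yields a family of representations of $\boldbar{\mathbf Q}^n$ with dimension vector $\mathbf v$; it satisfies the relations of Table~\ref{relenhHirz} and is $\Theta$-stable by Lemma~\ref{carac} (its $\mathbf Q^n$-part being fiberwise $\Gamma_c$-stable by \cite{bblr} and $F_1$ surjective). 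Over overlaps the isomorphism class of this family is independent of $m$, so the local families glue; this defines the inverse natural transformation, whose naturality follows as above. Finally $\eta_n$ and this inverse are mutually inverse, since each operation used — taking kernels and cokernels, the functors $\K_{n,\bnu}$ and $M$, and the equivalence of Theorem~\ref{JvFthm} — is functorial, and the composites were already checked to be the identity on points in the proof of Theorem~\ref{BLS}. Combined with the representability recorded before the Proposition, this also identifies $\mathcal M^{n\heartsuit s}_{\mathbf v,\Theta}\cong\Hilb^{c',c}(\Xi_n)$ and completes the proof of Theorem~\ref{mainresult}.

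\textbf{Main obstacle.} The crux is the passage from the pointwise statement to families. Since $\K_{n,\bnu}$ is defined only where $A_{\bnu}$ is a \emph{global} isomorphism, the construction is irreducibly local on $T$, and one must show the locally built flat families glue; this rests on Proposition~\ref{propK1}(2), which only provides a quasi-isomorphism of complexes, so everything hinges on the induced map being a framing-preserving isomorphism on $\mathcal H^0$ that canonically identifies the resulting sheaves $E$, $F$ and $F/E$ on the two charts. The flatness bookkeeping for $E$, $F$ and $F/E$ (so that arbitrary base changes behave well) and the verification that diagram~\eqref{seqexataprov} and the relations of Table~\ref{relenhHirz} hold in families, though routine, are the remaining technical points.
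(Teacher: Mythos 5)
Your proof is correct and follows essentially the same route as the paper: regard the stable family as a surjection of families of $\mathbf Q^n$-representations, take kernels, apply $\K_{n,\bnu}$, use Lemma \ref{cohomdeg} and stability to collapse the long exact cohomology sequence to a flat family of framed flags, and invert by running the construction backwards through the $\C^2$-charts and \cite[Eqs.~3.13]{bblr}. The only substantive difference is that you make explicit the open cover $\{T_m\}$ of the base and the gluing via Proposition \ref{propK1}(2) (a point the paper's proof leaves implicit by asserting a single $\bnu$ suffices), while the paper instead isolates the base-change lemma for monads that underlies your naturality claim.
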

		Again, the  key for the construction of the natural transformation $\eta_n$  is to regard a representation of the enhanced Hirzebruch  quiver $\boldbar {\mathbf Q}^n$ as a morphism of representations of the Hirzebruch  quiver ${\mathbf Q}^n.$ Let 
		$$X=(T,\V_0,\V_1,\W,\V_0', \V_1',A_1,A_2,C_1,\dots,C_n,I_1,\dots,I_{n-1},J, A_1', A_2',C_1',\dots,C_n',F_1, F_2)$$
		be a family of representations of the quiver $\boldbar {\mathbf Q}^n,$ with $T$ a scheme,  $\V_0,\V_1,\W,\V_0',\V_1'$ vector bundles on $T$ of rank $c, c, 1, c-c', c-c',$ respectively, and 
		\begin{gather}
			A_1, A_2 \in \Hom(\V_0, \V_1), \quad 
			C_1, \ldots, C_n \in \Hom(\V_1, \V_0), \quad
			I_1, \ldots, I_{n-1} \in  \Hom(\W, \V_0),\\[3pt]
			J \in \Hom(\V_0, \W), \quad 
			A_1', A_2' \in  \Hom(\V_0', \V_1'),  \quad 
			C_1', \ldots, C_n' \in  \Hom(\V_1', \V_0'),\\[3pt]
			F_1 \in  \Hom(\V_0, \V_0'),  \quad 
			F_2  \in \Hom(\V_1, \V_1').
		\end{gather}
		If we assume that $X$ is stable as in Lemma \ref{carac} then $F_1$ and $F_2$ are surjective. This defines a surjective morphism of families of representations of the quiver ${\mathbf Q}^n$. Define $\V_0'':=\ker(F_1)$ and $\V''_1:=\ker(F_2)$; note that they are vector bundles on $T$ of rank $c'.$ The morphisms $A_1,A_2,C_1,\dots,C_n,J$ induce morphisms $$A_1'',A_2''\in \Hom(\V_0'', \V''_1);\ C_1'',\dots,C_n''\in \Hom(\V''_1,\V_0'');\ J''\in \Hom(\V_0'', \W)$$  and   this defines a  kernel  family of representations of the quiver ${\mathbf Q}^n.$
		
		As we have natural inclusions $i_0:\V_0''\to \V_0$ and $i_1:\V_1'\to \V_1$ and the isomorphism $\id_{\W}:\W\to\W,$ we can build a short exact sequence of families of representations of the quiver ${\mathbf Q}^n$ parameterized by $T$
		$$0\to \X''\to\X\to \X'\to 0.$$
		By Lemmas \ref{carac} and   \ref{rmkcarac}  $\X''$ and $\X$ are families of stable framed representations. According to our previous discussion, there exists a $\bnu\in\P^1$ such that we can apply the functor $\K_{n,\bnu}$ to the previous sequence, obtaining by Proposition \ref{propK2} (4)  another exact sequence
		$$ 0 \to \K_{n,\bnu} (\X') \to \K_{n,\bnu} (\X)   
		\to \K_{n,\bnu} (\X'') \to 0 .$$
By taking cohomology 		 
we obtain an exact sequence of sheaves
\begin{equation}\label{longacateg}
			\mathcal{H}^{0}(\K_{n,\bnu} (\X'))\to \mathcal{H}^{0}(\K_{n,\bnu} (\X))\to \mathcal{H}^{0}(\K_{n,\bnu} (\X''))\to \mathcal{H}^{1}(\K_{n,\bnu} (\X'))\to \mathcal{H}^{1}(\K_{n,\bnu} (\X)) ; 
		\end{equation}
		by Proposition \ref{propK2} (2) the sheaves in this sequence do not depend, up to isomorphism, on the choice of $\bnu$.
		
		On the other hand, $\mathcal{H}^{0}(\K_{n,\bnu} (\X'))=0,$ by Lemma \ref{cohomdeg} and $\mathcal{H}^{1}(\K_{n,\bnu} (\X))=0$ as  $\X$ is stable. Thus  \eqref{longacateg} reduces to 
		\begin{equation}\label{curtacateg}
			0\to \mathcal{H}^{0}(\K_{n,\bnu} (\X))\to \mathcal{H}^{0}(\K_{n,\bnu} (\X''))\to \mathcal{H}^{1}(\K_{n,\bnu} (\X'))\to 0.
		\end{equation}
		Additionally, one has: 
		\begin{itemize}
			\item $F:=\mathcal{H}^{0}(\K_{n,\bnu} (\X''))$ is a torsion-free coherent sheaf on $T\times \Sigma_n$, with a framing  $\varphi$ to the trivial sheaf on $T\times \ell_\infty.$ Moreover, the second Chern class of $F_{\vert\{t\}\times \Sigma_n}$ is $c'$ for every closed point $t\in T.$ 
			\item $F$ and $E:=\mathcal{H}^{0}(\K_{n,\bnu} (\X))$ are flat over $T$ by Proposition \ref{propK2}, since $\X$ and $\X'$ are stable.
			\item $\mathcal{H}^{1}(\K_{n,m} (\X'))$ is a rank $0$ coherent sheaf on $T\times\Sigma_n$, supported away from $T\times\ell_\infty.$ For every closed point $t\in T$, the restriction of the schematic support of $\mathcal{H}^{1}(\K_{n,\bnu} (\X'))$ to the fiber over $t$ is a $0$-cycle on $\Sigma_n$ of length $c-c'.$
			\item $\mathcal{H}^{1}(\K_{n,\bnu} (\X'))$ is flat over $T,$ as it is a quotient of flat sheaves.
		\end{itemize}
		Therefore, the pair $(E, F)$ may be regarded as  a flat  family of nested 0-cycles parameterized by the scheme $T.$ This defines the natural transformation 
		 $\eta_n$. 
		To prove that $\eta_n$ is indeed a natural transformation we need to show that for any scheme morphism $f\colon S\to T$ the diagram
		$$\xymatrix{  \bar{\mathfrak{R}}_{\mathbf v, \Theta }^{n\heartsuit s}(T) \ar[rrr]^{\bar{\mathfrak{R}}_{\mathbf v, \Theta }^{n\heartsuit s}(f)}  \ar[dd]_{\eta_{n,T}} && &\bar{\mathfrak{R}}_{\mathbf v, \Theta }^{n\heartsuit s}(S) \ar[dd]^{ \eta_{n,S}}\\
			&&&\\
			\mathfrak{Hilb}^{c',c}_{\Xi_n} (T) \ar[rrr]^{\mathfrak{Hilb}^{c',c}_{\Xi_n}(f)} &&&\mathfrak{Hilb}^{c',c}_{\Xi_n}(S)}$$
		commutes. We have indeed
		\begin{equation} 
			\mathfrak{Hilb}^{c',c}_{\Xi_n}(f)\circ \eta_{n,T}   =  (\mathrm{Id}\times f)^{*}\circ \eta_{n,T}
			=\eta_{n,S}(f^{*}) 
			= \eta_{n,S}\circ \bar{\mathfrak{R}}_{\mathbf v, \Theta }^{n\heartsuit s}(f).
		\end{equation}
		The only nontrivial equality is    the middle one. This follows from the   Lemma
			
\begin{lemma} Let $\mathcal M^\bullet$ be a family of monads on $\Sigma_n$ parameterized by a scheme $T$, and let
$f\colon S\to T$ be a scheme morphism. Assume that the cohomology $\cE$ of $\mathcal M^\bullet$ is flat over $T$.
Then $(f\times\id)^\ast \mathcal M^\bullet$  is a family of monads parameterized by $S$, whose cohomology is isomorphic to $(f\times\id)^\ast \cE$. 
\end{lemma}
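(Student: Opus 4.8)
The plan is to split the monad into a subbundle inclusion followed by a surjection, and to push the flatness hypothesis through a base--change lemma. Write $g = f\times\id_{\Sigma_n}\colon S\times\Sigma_n\to T\times\Sigma_n$, and let $\mathcal M^\bullet$ be the complex $0\to A\xrightarrow{\alpha}B\xrightarrow{\beta}C\to 0$ of locally free sheaves on $T\times\Sigma_n$, with $\beta$ fibrewise surjective, $\alpha$ fibrewise injective with locally free cokernel, $\beta\alpha=0$, and cohomology $\cE=\ker\beta/\operatorname{im}\alpha$ flat over $T$. Since $g^\ast$ takes locally free sheaves to locally free sheaves and $g^\ast\beta\circ g^\ast\alpha = g^\ast(\beta\alpha)=0$, the complex $g^\ast\mathcal M^\bullet$ is again a three--term complex of locally free sheaves on $S\times\Sigma_n$.

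First I would check that $g^\ast\mathcal M^\bullet$ is genuinely a \emph{family} of monads, i.e.\ fibrewise a monad. For a closed point $s\in S$ with image $t=f(s)\in T$, the inclusion $i_s\colon\{s\}\times\Sigma_n\hookrightarrow S\times\Sigma_n$ satisfies $g\circ i_s = i_t$, hence $i_s^\ast g^\ast\mathcal M^\bullet = i_t^\ast\mathcal M^\bullet = \mathcal M^\bullet_t$, which is a monad by hypothesis; in particular $g^\ast\beta$ is fibrewise surjective, so $\mathcal H^1(g^\ast\mathcal M^\bullet)=\operatorname{coker}(g^\ast\beta)=0$.

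Next I would show that the cohomology commutes with $g^\ast$. Set $K=\ker\beta$ and use the two short exact sequences on $T\times\Sigma_n$
\[ 0\to K\to B\xrightarrow{\beta}C\to 0, \qquad 0\to A\xrightarrow{\alpha}K\to\cE\to 0. \]
Since $B$ and $C$ are locally free and $T\times\Sigma_n\to T$ is flat, $B$ and $C$ are flat over $T$, hence so is $K$ (kernel of a surjection of $T$-flat sheaves with $T$-flat cokernel, via the long exact $\operatorname{Tor}$ sequence over $\mathcal O_T$); in the second sequence $A$ is flat over $T$ (locally free) and $\cE$ is flat over $T$ by hypothesis. Now apply Lemma~2.1.4 of \cite{HL-book}: the pullback along $g=f\times\id$ of a sheaf flat over $T$ has vanishing higher derived pullbacks, so $g^\ast$ carries each of the two displayed sequences to a short exact sequence on $S\times\Sigma_n$ (using the $T$-flatness of $C$ for the first and of $\cE$ for the second). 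From the first we obtain $g^\ast K=\ker(g^\ast\beta)$; from the second, $g^\ast\alpha$ is injective, so $\mathcal H^{-1}(g^\ast\mathcal M^\bullet)=0$ and $\operatorname{im}(g^\ast\alpha)\subset g^\ast K$ has quotient $g^\ast\cE$. Therefore $\mathcal H^0(g^\ast\mathcal M^\bullet)=g^\ast K/\operatorname{im}(g^\ast\alpha)\cong g^\ast\cE$, which is the claim.

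The only substantive point is the base--change step: one needs that $Lg^\ast$ of a $T$-flat sheaf is concentrated in degree $0$, which is precisely what makes $g^\ast$ exact on the two sequences above; since $S\times\Sigma_n=(T\times\Sigma_n)\times_T S$, this is the standard flat--base--change / $\operatorname{Tor}$-vanishing statement (\cite[Lemma~2.1.4]{HL-book}), and everything else is a formal diagram chase. I would also remark in passing that the $T$-flatness of $\cE$, although assumed in the statement, already follows from the monad condition: the local flatness criterion applied to $0\to A\to K\to\cE\to 0$ together with the fibrewise injectivity of $\alpha$ gives it for free; but this is not needed for the argument.
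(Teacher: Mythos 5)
Your argument is correct, and it reaches the conclusion by a route that is more self-contained than the paper's, although the underlying mechanism is the same. The paper's proof constructs a comparison morphism $\mathcal H^\bullet \to g^\ast\cE$ by noting, as you do, that $g^\ast\ker\beta \simeq \ker g^\ast\beta$ because the last term of the monad is locally free, but it handles the image of $\alpha$ by introducing the correction sheaf $\mathcal T=\operatorname{Tor}_1^{\cO_{T\times\Sigma_n}}(\cO_{S\times\Sigma_n},g^{-1}(\mathcal V/\operatorname{im}\alpha))$ and then appealing to Lemma~2.2 of \cite{bbr} to see that the morphism is an isomorphism. You bypass the correction term entirely: by splitting the monad into $0\to K\to B\to C\to 0$ and $0\to A\to K\to\cE\to 0$, so that the flat sheaf $\cE$ itself sits as the cokernel of the second sequence, the vanishing of $L_1g^\ast$ on $T$-flat sheaves (flat base change for $\operatorname{Tor}$ along the flat projection $T\times\Sigma_n\to T$) makes both pulled-back sequences exact at once, and the identification $\mathcal H^0(g^\ast\mathcal M^\bullet)\cong g^\ast\cE$ together with $\mathcal H^{\pm1}=0$ drops out. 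This puts the flatness hypothesis on $\cE$ to work exactly where it is needed and avoids the external reference. One cosmetic remark: the fact you attribute to \cite[Lemma~2.1.4]{HL-book} (a short exact sequence with $T$-flat cokernel remains exact after base change) is the standard $\operatorname{Tor}$-vanishing statement from that circle of results rather than the literal statement of that lemma, but this does not affect the argument.
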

\begin{proof} Denote $g=f\times\id$ and let $\mathcal H^\bullet$ be the cohomology of the complex $g^\ast \mathcal M^\bullet$. One constructs a natural morphism 
$\mathcal H^\bullet \to g^\ast \cE$, which turns out to be an isomorphism as a consequence of 
Lemma 2.2 in \cite{bbr}. To construct the morphism, write the monad $\mathcal M^\bullet$ as
$$ 0 \to \mathcal U \xrightarrow{\ \alpha\ } \mathcal V \xrightarrow{\ \beta\ } \mathcal W \to 0 .$$
One notes that $g^\ast \ker \beta \simeq \ker g^\ast\beta$ as $\mathcal W$ is locally free, while there is a morphism 
$\operatorname{im}  g^\ast \alpha\to g^\ast\operatorname{im} \alpha/\mathcal T$, where
$\mathcal T = \operatorname{Tor}_1^{\cO_{T\times\Sigma_n}}(\cO_{S\times\Sigma_n},g^{-1}(\mathcal V/\operatorname{im} \alpha))$. On the other hand, $g^\ast\cE \simeq 
g^\ast\ker \beta/(g^\ast\operatorname{im} \alpha/\mathcal T).$\end{proof}
Note that the flatness condition is fulfilled as $\X$ and $\X''$ are stable.

	 To show that $\eta_n$ is actually a natural isomorphism, we must construct  a natural transformation 
		$$\tau_n:\mathfrak{Hilb}^{c',c}_{\Xi_n}\to\bar{\mathfrak{R}}_{\mathbf v, \Theta }^{n\heartsuit s}$$
		which is both a right and left inverse to $\eta_n.$ This can be done as follows (we omit the details as they should by now be quite straightforward).
		\begin{itemize}\itemsep=-2pt
\item A  family $(Z',Z)$  of nested 0-cycles   defines two families $\X'$, $\X$ of representations of the dual ADHM quiver,
with an injective morphism $\X' \to \X$.
\item  One defines  $\X''$ as the quotient  $\X/\X'$. The exact sequence
$$ 0 \to \X' \to \X \to \X'' \to 0 $$
corresponds to a diagram as in  \eqref{seqexataprov}. 
\item   Finally one uses \cite[Equations (3.13)]{bblr} to obtain a stable family of representations of the quiver $\boldbar {\mathbf Q}^n$ with the required dimension vector and relations (this is the construction at the end of Section \ref{setcorr}).
\end{itemize} 
This finalizes the proof of Theorem \ref{mainresult}.
 
 Recalling that $\Hilb^{c',c}(\Xi_n)$ is isomorphic to the naturally defined incidence subscheme in 
$\Hilb^{c'}(\Xi_n) \times \Hilb^c(\Xi_n)$, 
we may visualize the relation between the various moduli spaces we have introduced  by the diagram
$$ \xymatrix{
\displaystyle \frac{\operatorname{Rep}(\boldbar {\mathbf Q}^n)^{\heartsuit s}_{\mathbf v,\Theta}}{G} \ar[r]   \ar[r] \ar[d] 
&  \displaystyle \frac{\operatorname{Rep}(  {\mathbf Q}^n)^{\heartsuit s}_{\mathbf v_{c'},\Gamma_{c'}}}{G_{c'}} 
\times \displaystyle \frac{\operatorname{Rep}(  {\mathbf Q}^n)^{\heartsuit s}_{\mathbf v_c,\Gamma_c}}{G_c} \ar[d]
\\
\Hilb^{c',c}(\Xi_n)  \ar[r] & \Hilb^{c'}(\Xi_n) \times \Hilb^c(\Xi_n)    
}$$ 
where $\mathbf v_c = (c,c,1)$, $\mathbf v_{c'}=(c',c',1)$ and
$$G_{c'} = \GL_{c'}(\C) \times  \GL_{c'}(\C) , \qquad G_{c} = \GL_{c}(\C)\times  \GL_{c}(\C) .$$
The vertical arrows are the isomorphisms provided by Theorems \ref{mainresult} and \ref{mainbblr} respectively, while the upper horizontal arrow consists in describing a representation of $\boldbar{\mathbf Q}^n$ as a surjective morphism between two representations of $\mathbf{Q}^n$.

\def\cprime{$'$} \def\cprime{$'$} \def\cprime{$'$} \def\cprime{$'$}


\begin{thebibliography}{10}

\bibitem{bblr}
{\sc C.~Bartocci, U.~Bruzzo, V.~Lanza, and C.~L.~S. Rava}, {\em Hilbert schemes
  of points of {$\mathcal O_{\mathbb P^1}(-n)$} as quiver varieties}, J. Pure
  Appl. Algebra, 221 (2017), pp.~2132--2155.

\bibitem{bblr-irr}
\leavevmode\vrule height 2pt depth -1.6pt width 23pt, {\em On the
  irreducibility of some quiver varieties}, SIGMA Symmetry Integrability Geom.
  Methods Appl., 16 (2020), Paper No. 069, 13 pp.

\bibitem{bbr}
{\sc C.~Bartocci, U.~Bruzzo, and C.~L.~S. Rava}, {\em Monads for framed sheaves
  on {H}irzebruch surfaces}, Adv. Geom, 15 (2015), pp.~55--76.
\newblock Erratum, {A}dv. {G}eom. 16 (2016), 531-534. Corrected version
  available as {\tt arXiv:1205.3613v5 [math.AG]}.

\bibitem{Pat}
{\sc P.~Borges Dos~Santos}, {\em {ADHM} construction of nested {H}ilbert
  schemes}, PhD thesis, Universidade Estadual de Campinas, Campinas, Brazil,
  2016.

\bibitem{BruMar}
{\sc U.~Bruzzo and D.~Markushevich}, {\em Moduli of framed sheaves on
  projective surfaces}, Doc. Math., 16 (2011), pp.~399--410.

\bibitem{Cheah-Nested1}
{\sc J.~Cheah}, {\em Cellular decompositions for nested {H}ilbert schemes of
  points}, Pacific J. Math., 183 (1998), pp.~39--90.

\bibitem{CGGS2}
{\sc A.~Craw, S.~Gammelgaard, A.~Gyenge, and B.~Szendr\H{o}i}, {\em Punctual
  {H}ilbert schemes for {K}leinian singularities as quiver varieties}, Algebr.
  Geom., 8 (2021), pp.~680--704.

\bibitem{CGGS1}
\leavevmode\vrule height 2pt depth -1.6pt width 23pt, {\em Quot schemes for
  {K}leinian orbifolds}, SIGMA Symmetry Integrability Geom. Methods Appl., 17
  (2021), pp.~Paper No. 099, 21.

\bibitem{Craw-Ishii}
{\sc A.~Craw and A.~Ishii}, {\em Flops of {$G$}-{H}ilb and equivalences of
  derived categories by variation of {GIT} quotient}, Duke Math. J., 124
  (2004), pp.~259--307.

\bibitem{CMT}
{\sc A.~Craw, D.~Maclagan, and R.~R. Thomas}, {\em Moduli of {M}c{K}ay quiver
  representations. {I}. {T}he coherent component}, Proc. Lond. Math. Soc. (3),
  95 (2007), pp.~179--198.

\bibitem{Ginz}
{\sc V.~Ginzburg}, {\em Lectures on {N}akajima's quiver varieties}, in
  Geometric methods in representation theory. {I}, vol.~24-I of S\'{e}min.
  Congr., Soc. Math. France, Paris, 2012, pp.~145--219.

\bibitem{HL-book}
{\sc D.~Huybrechts and M.~Lehn}, {\em The Geometry of Moduli Spaces of
  Sheaves}, Aspects of Mathematics, Friedrick Vieweg \& Sohn
  Verlagsgesellschaft {M}bH, Braunschweig/Wiesbaden, 1997.

\bibitem{JvF}
{\sc M.~Jardim and R.~von Flach}, {\em Moduli spaces of framed flags of sheaves
  on the projective plane}, J. Geom. Phys., 118 (2017), pp.~138--168.

\bibitem{Keel}
{\sc S.~Keel}, {\em Functorial construction of {L}e {B}arz's triangle space
  with applications}, Trans. Amer. Math. Soc., 335 (1993), pp.~213--229.

\bibitem{Ki}
{\sc A.~D. King}, {\em Instantons and holomorphic bundles on the blown-up
  plane}, PhD thesis, Worcester College, Oxford, UK, 1987.

\bibitem{KiQ}
\leavevmode\vrule height 2pt depth -1.6pt width 23pt, {\em Moduli of
  representations of finite dimensional algebras}, Quart. J. Math. Oxford, 45
  (1994), pp.~515--530.

\bibitem{Kronh-Naka}
{\sc P.~B. Kronheimer and H.~Nakajima}, {\em Yang-{M}ills instantons on {ALE}
  gravitational instantons}, Math. Ann., 288 (1990), pp.~263--307.

\bibitem{Kuz}
{\sc A.~Kuznetsov}, {\em Quiver varieties and {H}ilbert schemes}, Moscow Math.
  J., 7 (2007), pp.~673--697.

\bibitem{Nakabook}
{\sc H.~Nakajima}, {\em Lectures on {H}ilbert schemes of points on surfaces},
  vol.~18 of University Lecture Series, American Mathematical Society,
  Providence, RI, 1999.

\bibitem{TesePedro}
{\sc P.~H. Santos}, {\em Nested {H}ilbert schemes on Hirzebruch surfaces and
  quiver varieties}, PhD thesis, Universidade {F}ederal da {P}ara\'iba, Jo\~ao
  {P}essoa, {B}razil, 2024.
\newblock Available from {\tt
  http://www.mat.ufpb.br/ppgmat/index.php/pt/egressos/teses}.

\bibitem{Sernesi}
{\sc E.~Sernesi}, {\em Deformations of algebraic schemes}, vol.~334 of
  Grundlehren der mathematischen Wissenschaften [Fundamental Principles of
  Mathematical Sciences], Springer-Verlag, Berlin, 2006.

\bibitem{stacks-project}
{\sc {The Stacks Project Authors}}, {\em Stacks {P}roject}.
\newblock {\tt https://stacks.math.columbia.edu}.

\bibitem{Vistoli-notes}
{\sc A.~Vistoli}, {\em Notes on {G}rothendieck topologies, fibered categories
  and descent theory}, in Fundamental algebraic geometry -- Grothendieck's FGA
  explained, vol.~123 of Mathematical Surveys and Monographs, Amer. Math. Soc.,
  Providence, RI, 2005, pp.~1--104.
\newblock {\tt arXiv:math/0412512 [math.AG]}.

\bibitem{JvFL}
{\sc R.~A. von Flach, M.~Jardim, and V.~Lanza}, {\em Obstruction theory for
  moduli spaces of framed flags of sheaves on the projective plane}, J. Geom.
  Phys., 166 (2021), pp.~10, Paper No. 104255.

\end{thebibliography}
\end{document}